\theoremstyle{plain}
  \newtheorem{lemma}[equation]{Lemma}
  \newtheorem{proposition}[equation]{Proposition}
  \newtheorem{theorem}[equation]{Theorem}
  \newtheorem{corollary}[equation]{Corollary} 
    \newtheorem{question}[equation]{Question}
\theoremstyle{definition}
  \newtheorem{definition}[equation]{Definition}
\theoremstyle{remark}
  \newtheorem{remark}[equation]{Remark}
\renewcommand{\thesection}{\arabic{section}}
\renewcommand{\theequation}{\thesection.\arabic{equation}}
 \DeclareFontFamily{U}{manual}{}
 \DeclareFontShape{U}{manual}{m}{n}{ <->  manfnt }{}
 \newcommand{\manfntsymbol}[1]{%
    {\fontencoding{U}\fontfamily{manual}\selectfont\symbol{#1}}}
\endgroup\end{trivlist}}
 \newenvironment{example}[1][]{
   \refstepcounter{equation}
   \begin{proof}[Example~\theequation%
   \@ifnotempty{#1}{ (#1)}.]
   }
  {\end{proof}}
  \DeclareFontFamily{OT1}{pzc}{}
  \DeclareFontShape{OT1}{pzc}{m}{it}{<-> s * [1.100] pzcmi7t}{}
  \DeclareMathAlphabet{\mathpzc}{OT1}{pzc}{m}{it}
\newif\ifhascomments \hascommentstrue
\renewcommand{\AA}{\mathbb{A}}
\DeclareMathOperator{\Aut}{\ensuremath{\mathcal{A}\kern-.125em\mathpzc{ut}}}
\DeclareMathOperator{\diag}{diag}
\newcommand{\CC}{\mathbb C}
\renewcommand{\emptyset}{\varnothing}
\DeclareMathOperator{\Endo}{\ensuremath{\mathcal{E}\kern-.125em\mathpzc{nd}}}
\newcommand{\cC}{\mathfrak c}
\newcommand{\fh}{\mathfrak h}
\renewcommand{\sl}{\mathfrak{sl}}
\newcommand{\so}{\mathfrak{so}}
\renewcommand{\sp}{\mathfrak{sp}}
\newcommand{\GG}{\mathbb G}
\DeclareMathOperator{\GL}{GL}
\DeclareMathOperator{\Hom}{\ensuremath{\mathcal{H}\kern-.125em\mathpzc{om}}}
\newcommand{\NN}{\mathbb N}
\newcommand{\ps}{\mathrm{ps}}
\newcommand{\pr}{\mathrm{pr}}
\newcommand{\QQ}{\mathbb Q}
\newcommand{\RR}{\mathbb R}
\DeclareMathOperator{\rk}{rk}
\renewcommand{\setminus}{\smallsetminus}
\DeclareMathOperator{\SL}{SL}
\DeclareMathOperator{\Sp}{Sp}
\DeclareMathOperator{\SO}{SO}
\DeclareMathOperator{\Spin}{Spin}
\DeclareMathOperator{\spec}{Spec}
\DeclareMathOperator{\stab}{Stab}
\DeclareMathOperator{\Sym}{Sym}
\newcommand{\X}{\mathcal{X}}
\newcommand{\ZZ}{\mathbb{Z}}
 \def\ari[#1]{\ar@{^(->}[#1]}
 \def\are[#1]{\ar[#1]^{\txt{\'et}}}
 \def\areh[#1]{\ar[#1]|{\txt{$H$-eq}}^{\txt{\'et}}}
 \def\ars[#1]{\ar@{->>}[#1]}
 \newcommand{\dplus}{\ar@{}[d]|{\mbox{$\oplus$}}}
 \newcommand{\dtimes}{\ar@{}[d]|{\mbox{$\times$}}}
\DeclareMathOperator{\codim}{codim}
\DeclareMathOperator{\sss}{sss}
\DeclareMathOperator{\Pic}{Pic}
\newcommand{\fc}{\frak{c}}
\newcommand{\fg}{\frak{g}}
\newcommand{\cX}{\mathcal{X}}
\newcommand{\extp}{\@ifnextchar^\@extp{\@extp^{\,}}}
\def\@extp^#1{\mathop{\bigwedge\nolimits^{\!#1}}}
\newcommand{\sslash}{\hspace{-0.1cm}\mathbin{/}\hspace{-0.07cm}}
\title{On a smoothness characterization for good moduli spaces}
\author{Dan Edidin}
\address{Department of Mathematics, University of Missouri, Columbia MO 65211}
\email{edidind@missouri.edu}
\author{Matthew Satriano}
\address{University of Waterloo \\
Department of Pure Mathematics \\
Waterloo, Ontario \\
Canada  N2L 3G1}
\email{msatrian@uwaterloo.ca}
\author{Spencer Whitehead}
\address{Duke University \\
Department of Mathematics \\
Durham, NC 27708}
\email{spencer.whitehead@duke.edu}
\thanks{The first author was supported by Simons Collaboration Grant
  315460. The second author was partially supported by a Discovery Grant from the National Science and Engineering Board of Canada as well as a Mathematics Faculty Research Chair from the University of Waterloo. 
  The third author was partially supported by an Undergraduate Student Research Award from the National Science and Engineering Board of Canada.}
\begin{document}

\begin{abstract}
%  \matt{fill in abstract to reflect the changes to the intro}
  Let $\X$ be a smooth Artin stack with properly stable good moduli space $\X \stackrel{\pi} \to X$. The purpose of this paper is to prove that a simple geometric criterion can often characterize when the moduli space $X$ is smooth and the morphism $\pi$ is flat.
\end{abstract}

\maketitle

%\matt{To do:
%
%\begin{enumerate}
%\item change abstract
%\item add references
%\end{enumerate}
%}

%\part{General facts about pure representations}
\part{Main Results}
\label{part:general-facts}

\section{Introduction}

Let $K$ be an algebraically closed field of characteristic $0$ and
$\X$ be a $K$-smooth Deligne--Mumford stack with coarse space
$p\colon\X\to X$. Applying the Purity of the Branch Locus Theorem to the proper quasi-finite morphism $p$ yields a necessary condition for $X$ to be smooth:~the branch locus of $p$ must be pure of codimension-one; here the branch locus is the complement of the largest
open set $U \subset X$ over which $p$ is \'etale.
On the other hand, the beautiful theorem of Chevalley--Shephard--Todd gives a simple group-theoretic criterion which is sufficient
for determining when $X$ is
smooth. Specifically, if $x\in\X(K)$ with stabilizer group $G_x$, then
$X$ is smooth at $p(x)$ if and only if the $G_x$-action on the
tangent space $T_{X,x}$ is generated by pseudo-reflections, i.e.~$G$ is generated
by elements ~$g\in G_x$ whose fixed locus is a hyperplane. Whenever $X$ is smooth, $p$ is automatically flat.

For smooth Artin stacks we consider the following situation analogous to
the Deligne--Mumford setting.
Let $\X$ be a smooth Artin stack with properly stable good moduli space 
$p\colon\X\to X$; this means that there is a dense set of points $x$ in $\cX$ which have $0$-dimensional stabilizer 
and are also $p$-saturated, i.e.,~$p^{-1}p(x) = x$ \cite[Definition 2.5]{EdRy:21}.
In this case the good moduli space morphism is not separated but shares some properties of a proper quasi-finite morphism: it is universally closed and if $x\in X$ is a closed point, then there is a unique closed point in the fiber of $p$ over $x$. Such $\X$ arise
naturally in the context of GIT, e.g.~if $G$ is a reductive group with
properly stable action on a variety $U$ and if $U^{ss}$ denotes the
semistable locus, then $p\colon[U^{ss}/G]\to U\sslash G$ is a properly stable good moduli space.

Despite the fact that smooth Artin stacks with properly stable good moduli spaces are analogous to Deligne--Mumford stacks, there are no general necessary and sufficient criteria to determine when $X$ is smooth and $p$ is flat. Indeed, one cannot invoke the Purity of the Branch Locus Theorem as $p$ is not proper quasi-finite, and there is no known analogue of the Chevalley--Shephard--Todd Theorem since smooth Artin stacks do not have tangent bundles. 

The starting point for this paper is to instead take a GIT point of view. By
\cite[Theorem 4.12]{AHR:20} and \cite{Lun:73}, at a closed
point $x$ of $\X$, the map $p$ is \'etale locally isomorphic to $[V/G_x] \to V/G$ for some representation $V$ of the stabilizer group $G_x$. Thus, the problem of determining when $X$ is smooth and $p$ is flat reduces to the case where $\X=[V/G]$ and $V$ is a representation of a linearly reductive group $G$. A natural analogue of the branch locus is then the image in $X$ of points in $V$ which have a {\em positive dimensional stabilizer group}. This is exactly the image of the GIT strictly semi-stable points of $X$. Inspired by the Purity of the Branch Locus Theorem, a na\"{i}ve guess is that the following condition is necessary for $X$ to be smooth:
%\vspace{.01em}
\begin{equation}
  \tag{$\star$}\label{item.pure}
  \parbox{\dimexpr\linewidth-4em}{%
    \strut
The image of the strictly semi-stable points must be of pure codimension-one.
    \strut
  }
\end{equation}

The main results of this paper imply that condition \eqref{item.pure}
goes a long way toward determining when $V/G$ is smooth and $p$ is flat. Specifically, we prove
for irreducible representations
of simple groups, condition \eqref{item.pure} is both necessary and sufficient
for $V/G$ to be smooth and $[V/G] \to V/G$ to be flat.
In addition we show that when $G$ is a torus, a slight strengthening of condition \eqref{item.pure}
is necessary and sufficient to characterize when $V/G$ is smooth and $[V/G] \to V/G$ is flat.

%If $X$ were smooth and $p$ were finite, then the Purity of the Branch Locus Theorem \cite{} would imply that the branch divisor of $p$ is pure of codimension one. From a GIT vantage point, the strictly semistable locus serves a natural analogue of the ramification divisor, and hence
To state our results precisely we 
introduce Definition \ref{def:V-is-pure} after recalling some basic notions.

\begin{definition}
  Let $V$ be a representation of a reductive group $G$. A vector
  $v \in V$ is {\em $G$-stable}  if
  $Gv$ is closed and $v$ is not contained in the closure of any other
  orbit. A vector $v \in V$ is {\em $G$-properly stable} if $v$ is stable and $\dim Gv = \dim G$.

  A representation $V$ is stable (resp.~properly stable)
  if it contains a stable (resp.~properly stable vector). In this case, 
  the set $V^{s}=V^s(G)$  of $G$-stable (resp.~properly stable) vectors is Zariski open.
We denote by $V^{\sss}=V^{\sss}(G)$ the closed subset $V \setminus V^{s}$; vectors $v \in V \setminus V^{s}$ are said to be {\em $G$-strictly semi-stable}.
  \end{definition}
  
\begin{definition}
\label{def:V-is-pure}
Let $V$ be a stable representation of a connected reductive group $G$ and let $\pi\colon V\to V/G$ be the quotient map. Then $V$ is \emph{pure} if $\pi(V^{\sss})$ is pure of codimension-one in $V/G$.
\end{definition}

Despite the fact that $p$ is not proper quasi-finite and that $\pi(V^{\sss})$ is not a perfect analogue of the branch locus of
$p$, our crude GIT analogy is already remarkably powerful, as
witnessed by Theorem \ref{thm:simple} below. Recall that a
$G$-representation $V$ is called \emph{coregular} if $V/G$ is smooth,
and is called \emph{cofree} if it is coregular and $\pi\colon V\to V/G$ is flat.
%; in other words, cofree representations are precisely those representations we seek to characterize in order to determine when $p\colon\X\to X$ is flat and $X$ is smooth.

\begin{theorem}\label{thm:simple}  
Let $V$ be an irreducible stable representation of a simple Lie group $G$. Then $V$ is cofree if and only if $V$ is pure.
\end{theorem}

%\begin{remark}
%It is worth noting that Theorem \ref{thm:simple} constitutes an answer (in the case of irreducible representations of simple Lie groups) to a problem raised by Popov in his 1986 ICM address, see Subsection \ref{subsec:popov-ICM}
%\end{remark}

For groups $G$ with non-trivial characters, a more refined notion of purity is needed. By Remark \ref{rmk:purity-notions-equivalent}, this more refined notion is equivalent to purity for simple Lie groups.

\begin{definition}
Let $V$ be a stable representation of a connected reductive group $G$ and let $\pi\colon V\to V/G$ be the quotient map. We say $V$ is \emph{coprincipal} if it is pure and every irreducible component of $V^{\sss}$ (with its reduced subscheme structure) maps to a
%Cartier
  principal divisor under $\pi$.%the quotient map $V \to V/G$.
  \footnote{%The term coprincipal is a contraction of {\em Cartier and pure}. 
  Note that any locally principal divisor on $V/G$ is principal because $\Pic(V/G) = 0$ \cite{KKV:89}}
%  \end{enumerate}
  \end{definition}

\begin{remark}
  \label{rmk:purity-notions-equivalent}
  If $G$ has no non-trivial characters then it is easy to show (Lemma \ref{part:sl_n-reps}.\ref{l:allpure})
  that the notions of pure and coprincipal coincide, and that a sufficient
  condition for $V$ to be pure is that $\codim(V^{sss}) = 1$.
  In contrast, when $G$ is a torus, we exhibit pure representations which are not coprincipal; we also give representations where
  $V^{sss}$ is a divisor but $V$ is not pure, i.e.~$\pi(V^{sss})$ is not pure of codimension-one. See Example \ref{part:torus-actions}.\ref{ex:counter-example}.
\end{remark}

%\matt{where should diagram go relating pure, coregular, cofree, coprincipal?
%\[
%\xymatrix{
%\textrm{cofree}\ar@/^1.5pc/@{=>}[rr] & & \textrm{coregular}\ar@/^1.5pc/@{=>}[dd]\ar@/^1.5pc/@{=>}[ll]\\
%&&\\
%\textrm{coprincipal}\ar@/^1.5pc/@{=>}[rr] & & \textrm{pure}\ar@/^1.5pc/@{=>}[uu]\ar@/^1.5pc/@{=>}[ll]
%}
%\]
%}

Further illustrating the utility of our GIT analogy, we prove:

\begin{theorem}
\label{thm:conj-for-tori}
A stable torus representation is cofree if and only if it is coprincipal.
\end{theorem}

\begin{remark}
\label{rmk:distinguishing-pure-pure-coprincipal-coorbifold}
Furthermore, we prove in Proposition \ref{part:torus-actions}.\ref{prop.qcnice} 
%the surprising result 
that if $V$ is a pure representation of a torus which is not coprincipal then $V/G$ has worse than finite quotient singularities.
  \end{remark}

\begin{remark}
  The restriction to stable representations in Theorem \ref{thm:conj-for-tori} is relatively
  insignificant because Wehlau \cite[Lemma 2]{Weh:92} proved that any
  torus representation $V$ has a (canonical) stable submodule $V'$
  such that $V'/T = V/T$ with the properties that $V' = V$ if and only if 
  $V$ is stable, and $V'$ is cofree if and only if $V$ is cofree. Thus
  Theorem \ref{thm:conj-for-tori} can be restated as saying that if
  $V$ is an arbitrary representation of a torus with non-trivial
  invariant ring, then $V$ is cofree if and only if the stable submodule $V'$
  is coprincipal.
  \end{remark}

Our main results relating coregularity, cofreeness, purity, and coprincipality for stable representations are summarized in the diagram below. Note that when $V$ is irreducible and $G$ is simple, all four notions coincide.
\[
\xymatrix{
 & \textrm{coregular}\ar@/^1.5pc/@{=>}[d]^-{\textrm{\cite{KPV:76}:~$V$\textrm{\ irreducible},\ $G$\textrm{\ simple}}} & \\
 & \textrm{cofree}%\ar@{=>}[ddr]!U|{\textrm{\ref{prop:polar-reps-Vsss-div}}}
 \ar@/^1.5pc/@{=>}[u]^-{\textrm{by\ definition}} & \\
&&\\
\textrm{coprincipal}\ar@{<=>}[uur]^-{\textrm{Theorem\ } \ref{thm:conj-for-tori}:~\textrm{torus\ representations}}\ar@{=>}[rr]^-{\textrm{by\ definition}} & & \textrm{pure}
\ar@{<=>}[uul]_-{\textrm{Theorem\ } \ref{thm:simple}:~V\textrm{\ irreducible},\ G\textrm{\ simple}}
%\ar@/_1.5pc/@{=>}[uul]_-{\textrm{Theorem\ } \ref{thm:simple}:~V\textrm{\ irreducible},\ G\textrm{\ simple}}
\ar@/^1.5pc/@{=>}[ll]^{\textrm{Remark \ref{rmk:purity-notions-equivalent}:~$G$ has no non-trivial characters}}
}
\]

\subsection{Questions and  examples}
The fact that coprincipality characterize cofreeness for
irreducible representations of simple Lie groups as well as reducible
representations of tori suggestions that it may be a useful class of
representations in greater generality. We pose the following
questions.
%\matt{I'm fine completely removing these questions /
%  conjectures from the paper if it would make the article
%  stronger. I'm also fine moving them later in the document if we'd
 % prefer.}

\begin{question}
\label{q:nicely-pure}
Let $V$ be a stable representation of a connected reductive group $G$.
\begin{enumerate}[label=(\arabic*)]
\item\label{conj:nicely-pure::cofree->pure} If $V$ is cofree, then is it coprincipal?

\item\label{conj:nicely-pure::main} Let $G$ be semisimple and $V$ be irreducible. If $V$ is pure (equivalently coprincipal), then is it cofree?
%, i.e.~the converse of \ref{conj:nicely-pure::cofree->pure} holds for stable irreducible representations of semi-simple groups.
%Any irreducible stable cofree representation of a semi-simple group is pure (and hence coprincipal); ie the converse of (1) holds for stable irreducible representations of semi-simple groups.
\end{enumerate}
\end{question}

\begin{remark}[Relationship to a result of Brion]
Michel Brion pointed us to a result of his \cite[4.3 Corollaire 1]{Bri:93} which gives some evidence for an affirmative answer to Question \ref{q:nicely-pure}\ref{conj:nicely-pure::cofree->pure}. Precisely, Brion proves that if $V$ is a properly stable representation of a reductive group (not necessarily connected), and if $\codim(V \smallsetminus V^{\pr}) \geq 2$, then $K[V]$ cannot be a free $K[V]^G$ module. Here $V^{\pr}$ is the locus of orbits of principal type. Since $V^{\pr} \subset V^{s}$, it follows that such representations are not pure.
\end{remark}

\begin{remark}[Reducible representations]
We note that the irreducibility assumption in Question \ref{q:nicely-pure} \ref{conj:nicely-pure::main} cannot be dropped, even when $G$ is simple. The smallest example we know to illustrate this, which we learned from Gerald Schwarz, is the $\SL_3$-representation $V = \Sym^2(\CC^3) \oplus (\CC^3)^{\oplus 2}$. The fact that the irreducibility assumption cannot be dropped is completely analogous to the picture for finite groups. Indeed, for a faithful representation $V$ of a finite group $G$, if $V^f$ denotes the open set on which $G$ acts freely, then the condition that $V\smallsetminus V^f$ is a divisor is necessary but not sufficient for cofreeness of $V$. For a simple example, consider the $\mu_4$-action on $\CC^2$ with weights $(1,2)$.

Although reducible, pure, non-cofree representations do exist, the
conditions of purity and cofreeness are both quite rare for reducible
representations. Indeed, for semisimple groups, any representation with
no trivial summands and at least two properly stable summands cannot be cofree; similarly, for a reductive group, any representation with at least two properly stable summands 
%(regardless of whether or not those summands are irreducible) 
cannot be pure. To see the former statement, note that any properly stable $G$-representation has dimension at least $\dim G +1$, and by \cite[Theorem 8.9]{PoVi:94} if $G$ is semisimple then any coregular (and thus cofree) representation with no trivial summands has dimension at most $2 \dim G$. To see the latter statement, note that if $V$ and $W$ are properly stable
representations then the Hilbert--Mumford criterion implies that $(V^s
\oplus W) \cup (V \oplus W^s) \subset (V \oplus W)^{s}$. Since
$\codim_V (V\smallsetminus V^s) \geq 1$ and $\codim_W (W
\smallsetminus W^s) \geq 1$, we see that $\codim_{V\oplus W}((V \oplus
W)\smallsetminus (V \oplus W)^{s}) \geq 2$.
\end{remark}

\begin{comment}
\begin{remark}
  Having shown that $V\oplus W$ is never pure when $V$ and $W$ are properly stable, one may wonder if $V\oplus W$ can be pure when $V$ is properly stable and $W$ is a non-trivial representation. This is indeed possible, as pointed out to us by Schwarz: the $\SL_6$-representation
  $V=(\CC^6)^{\oplus 6} \oplus \extp^2 \CC^6$ is pure but not cofree. Note
  that the first summand is properly stable, pure and cofree. Adding
  the second summand which is stable but not properly stable preserves purity but not cofreeness. Thus, the strongest statement one can make is that the sum of two or more properly
  stable representations is never pure. Note that similar examples can be constructed for finite groups. If $G$ is the group of monomial $n \times n$  matrices with
  entries in $\mu_p$ then $G$ is generated by pseudo-reflections and the invariant ring $\CC[x_1, \ldots, x_n]^G$ is the ring of symmetric polynomials
  in $x_1^p, \ldots , x_n^p$. However if $p > 2$ is even then $V = \CC^n + \det^2$ is not cofree. However, $V^f = \{(a_1, \ldots , a_n, z)\}$
with all  $a_i$ are distinct and no coordinate is zero, and thus $V^f$ is the complement of a divisor. 
\end{remark}
\end{comment}

\section{Outline of the proofs of the main theorems}
\label{subsec:outline-sln-pf}

\subsection{Theorem \ref{thm:simple}:~``only if'' direction}
This is the easier direction of Theorem \ref{thm:simple}. Recall that a representation of $V$ is {\em polar} if
there is a subspace $\fc \subset V$ and a finite group $W$ such that
$K[V]^G = K[\fc]^W$. The basic example of a polar representation is
the adjoint representation $\fg$; here $\fc$ is a Cartan subalgebra
and $W$ is the Weyl group. Using results of Dadok and Kac
\cite{DaKa:85} we prove that any stable polar representation (not necessarily irreducible) is pure, see Proposition \ref{part:sl_n-reps}.\ref{prop:polar-reps-Vsss-div}. On the other hand, Dadok and Kac proved that any irreducible cofree representation of a simple group is polar. Thus, we conclude that any stable irreducible cofree representation of a simple group is pure.

\subsection{Theorem \ref{thm:simple}:~``if'' direction}
This is the most involved result of the paper. In Section \ref{part:sl_n-reps}.\ref{sec:bounding-pure-reps}, we show that if $G$ is reductive and $V$ is an pure $G$-representation, then there is a hyperplane $H$ in the character lattice of $V$ tensored with $\RR$ satisfying the following special
condition: $H$ contains at least $\dim V - \dim G + 1$ weights when
counted with multiplicity. In particular, this implies that when
$G$ is semisimple, every irreducible pure representation $V$ has dimension bounded by a cubic in $\rk(G)$. 
In Section \ref{part:sl_n-reps}.\ref{sec:small-enough-representations}, we further
show that if $V$ is pure, then its highest weight lies on a ray
(or possibly a $2$-dimensional face, if $G = \SL_n$) of the Weyl chamber. Comparing with the
known list of cofree representations of simple groups, we are reduced to
%checking for each $n$ that an explicit finite number of representations are not pure. More specifically, we are left to check
checking that $11$ infinite families and $94$ more sporadic cases, are not pure.
These calculations, performed in Section \ref{part:sl_n-reps}.\ref{sec:notpure} are mostly done by computer, but a number must be done by hand, and will show the nature of the computer analysis done.

\subsection{Theorem \ref{thm:conj-for-tori}}
We prove Theorem \ref{thm:conj-for-tori} for tori $T$ by inducting on $\dim V$. The key to the proof is showing in Proposition
\ref{part:torus-actions}.\ref{prop:1D-factor} that if $V$ is a coprincipal representation
of a torus, then $V$ splits as a sum of $T$-representations $V= V_1 \oplus V_2$ such that $V/T = V_1/T \times V_2/T$
and $V_1/T$ is 1-dimensional. This argument makes essential use of the fact
that the images of the irreducible components of $V^{\sss}$ are Cartier divisors. %Example \ref{part:torus-actions}.\ref{ex:coreg-notpure} we give an example of an pure torus representation which is not coregular and hence not cofree.

%The proof makes essential use of the fact that the components of $V^{\sss}$
%map to Cartier divisors by showing that 
%We do so in Example \ref{part:torus-actions}.\ref{ex.purenotnupure}.  The difference is that when $G$ has a non-trivial character then the image of a divisorial component of $V^{\sss}$ under the quotient map $\V \stackrel{\pi} \to V/G$ need not be a divisor. Following \cite{PoVi:94} we say that a representation is {\em nice} if the image under $\pi$ of any $G$-invariant divisor is a divisor and we say that a representation is pure if it is nice and pure. Finally we say that $V$ is $coprincipal$ if the image of every component of $V^{\sss}$ is a Cartier divisor. For a semi-simple group it is elementary that the three conditions pure, pure and coprincipal are equivalent. However, we give examples that these conditions are all distinct when $G$ has non-trivial characters. Note that if $V$ is cofree and pure then it is automatically coprincipal because the quotient map is flat and $K[V]^G$ is a regular . Thus, our conjecture for arbitrary reductive groups is that $V$ is cofree if and only it is coprincipal, and Theorem \ref{part:general-facts}\ref{thm:conj-for-tori} proves this for actions of tori. We also show that for actions of tori if $V$ is pure but not coprincipal then the quotient $V/G$ has worse than orbifold singularities.

\section*{Acknowledgments}
It is a pleasure to thank Jason Bell, Chris Mannon, Matt Kennedy, Radu Laza, Heydar Radjavi, Frank Sottile, and Jerry Wang for several helpful conversations. We are especially grateful to William Slofstra,  Ronan Terpereau, and Ben Webster for many illuminating conversations.

We thank Michael Bulois for pointing out an error in the first version of the paper. We are grateful to Michel Brion and Victor Kac for their insightful comments and interest in our work.

Finally, we are beyond grateful both to an anonymous referee whose suggestions greatly improved our paper, and to Gerald Schwarz whose wealth of
examples helped shape Question \ref{q:nicely-pure}. We thank him for allowing us to include his examples in this article.

\part{Representations of simple groups: proof of Theorem \ref{part:general-facts}.\ref{thm:simple}}
\label{part:sl_n-reps}

This part is organized as follows.
In \S\ref{sec.basicfacts} we prove some basic facts about pure representations that we will use throughout.
In \S\ref{sec:polar}, we prove that every stable cofree irreducible representation of a connected simple group $G$ is pure, that is, we prove the ``only if'' direction of Theorem \ref{part:general-facts}.\ref{thm:simple}.
In \S\ref{sec:bounding-pure-reps}, we prove the key result that if $V$ is any pure representation, then there is a hyperplane in the weight lattice containing most of the weights.
This implies that up to isomorphism there are a finite number of pure representations not containing a trivial summand.

In \S\ref{sec:small-enough-representations}, we apply the criteria in \S\ref{sec:bounding-pure-reps} to create a finite list on which all irreducible pure representations of a simple group may be found.
Then, in \S\ref{sec:notpure}, we demonstrate that representations on this list which are not cofree are also not pure, thus proving Theorem \ref{part:general-facts}.\ref{thm:simple}.

%A detailed analysis of the combinatorial configuration of the weights in these cases shows that $V$ is pure if and only if it is cofree, thereby proving Theorem \ref{part:general-facts}.\ref{thm:sln}.
\section{Basic facts about pure representations} \label{sec.basicfacts}

\begin{lemma}
\label{l:pureextension}
If $V$ is a representation of a reductive group $H$, and $f : G \to H$ is a surjection, then $V$ is pure when considered as an $H$-representation if and only if $V$ is pure when considered as a $G$-representation.
\end{lemma}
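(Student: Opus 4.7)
The plan is to exploit the basic observation that a surjection of groups $f\colon G\twoheadrightarrow H$ produces the same orbits in $V$. Explicitly, for any $v\in V$ we have $Gv=\{f(g)\cdot v : g\in G\}=Hv$ because $f$ is surjective, and hence $\overline{Gv}=\overline{Hv}$ as well. This single observation does essentially all the work.

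First I will translate it into a comparison of stable loci. A vector $v\in V$ is $G$-stable precisely when $\overline{Gv}=Gv$ and $v$ fails to lie in the closure of any strictly larger orbit; by the orbit identification above, this holds for the $G$-action if and only if it holds for the $H$-action. Consequently $V^{s}(G)=V^{s}(H)$, and taking complements $V^{\sss}(G)=V^{\sss}(H)$. In particular $V$ is stable as a $G$-representation if and only if it is stable as an $H$-representation.

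Since purity is, by definition, the statement that the set-theoretic closed subset $V^{\sss}$ is of pure codimension one in $V$, the equality $V^{\sss}(G)=V^{\sss}(H)$ immediately yields that $V$ is pure for $G$ if and only if $V$ is pure for $H$.

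There is no real obstacle here; the only thing worth checking carefully is that the relevant notions (closure of an orbit, stability in the sense of this paper) are genuinely orbit-theoretic and do not see, for example, the difference between $\dim G$ and $\dim H$. Proper stability would be sensitive to this, but the definition of \emph{pure} invokes only stability and $V^{\sss}$, so the argument as outlined is complete.
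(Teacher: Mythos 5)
Your proposal is correct and follows exactly the paper's argument: surjectivity of $f$ gives $Gv=f(G)v=Hv$, so the two actions have the same orbits, hence the same stable and strictly semi-stable loci, hence the same purity. Your added remark that the definition of \emph{pure} depends only on orbit-theoretic data (and would not survive if proper stability were involved) is a worthwhile clarification, but the substance matches the paper's proof.
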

\begin{proof}
Since $f$ is surjective, $Hv = f(G)v$, which is $Gv$ by definition of the $G$-action.
So all the orbits of the actions are the same, and hence their purity is the same.
\end{proof}

In the event that $G$ is a simple Lie group which is not simply connected, it has a universal covering $G' \to G$ with finite kernel $K$.
There is a one-to-one correspondence between $G$-representations and $G'$-representations which are trivial when viewed as a $K$-representations. By Lemma \ref{l:pureextension}, we have
\begin{corollary}
\label{c:simplyconnected}
To prove Theorem \ref{thm:simple} for all simple Lie groups, it suffices to prove it for the exceptional groups and the groups $\SL_n, \Sp_{2n}, \Spin_n$ for all $n$.
\end{corollary}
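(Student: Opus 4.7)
The plan is to reduce the conjecture for an arbitrary simple Lie group $G$ to the conjecture for its simply connected cover $G'\to G$, and then invoke the classification of simply connected simple Lie groups. The key point is that the correspondence between $G$-representations and $G'$-representations trivial on $K = \ker(G'\to G)$ is compatible with every invariant-theoretic notion appearing in Conjecture~\ref{conj:nicely-pure}, so the conjecture transfers along it in both directions.

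To verify this compatibility, first recall that Lemma~\ref{l:pureextension} already handles purity. Since $G'$ acts on $V$ through $G$, we also have an equality of invariant rings $K[V]^G = K[V]^{G'}$ and hence of quotient morphisms $V/G = V/G'$. Consequently, cofreeness agrees, the stable and strictly semi-stable loci for $G$ and $G'$ coincide as subsets of $V$, and since $\dim G = \dim G'$, the notions of stable and properly stable vectors are identical. It follows that the images of irreducible components of $V^{\sss}$ under the quotient map are the same whether computed for $G$ or for $G'$, and so the npure and cnpure conditions are also preserved. Irreducibility is built into the representation-theoretic correspondence, and $G'$ is connected semisimple whenever $G$ is. Thus both parts of Conjecture~\ref{conj:nicely-pure} hold for $V$ as a $G$-representation if and only if they hold for $V$ as a $G'$-representation, so proving the conjecture for all simply connected simple Lie groups suffices.

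The corollary then follows from the classification of simply connected simple complex Lie groups: type $A_n$ gives $\SL_{n+1}$, type $B_n$ gives $\Spin_{2n+1}$, type $C_n$ gives $\Sp_{2n}$, type $D_n$ gives $\Spin_{2n}$, and the remaining types are the exceptional groups $G_2, F_4, E_6, E_7, E_8$. This list is exactly $\SL_n, \Sp_{2n}, \Spin_n$ for all $n$ together with the five exceptionals. There is no substantial obstacle here; the statement is essentially a bookkeeping corollary, with the real content being the observation --- already captured by Lemma~\ref{l:pureextension} and the paragraph preceding the corollary --- that each invariant-theoretic condition in the conjecture is insensitive to replacing $G$ by its universal cover.
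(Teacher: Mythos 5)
Your proposal is correct and follows essentially the same route as the paper: pass to the universal cover $G'\to G$ with finite kernel $K$, use the correspondence between $G$-representations and $G'$-representations trivial on $K$ together with Lemma~\ref{l:pureextension} (and the equality of orbits, invariant rings, and quotient maps) to transfer the conjecture, and then list the simply connected simple groups. The paper states this more tersely, but the content is identical.
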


\begin{lemma} \label{l:allpure}
Let be $G$ a reductive group with no non-trivial characters. If $V$ is stable representation for which $V^{\sss}$ is pure of codimension-one, then $V$ is coprincipal. In particular, every pure representation of $G$ is coprincipal.
\end{lemma}
\begin{proof}
Since $G$ is connected, every component of $V^{\sss}$ is $G$-invariant.
Thus, the equation $f \in K[V]$ of the component must be an eigenfunction for the action of $G$ on $K[V]$; that is, $g\cdot f = \lambda(g) f$ for all $g \in G$.
Since $G$ has no non-trivial characters, $f$ must in fact be invariant.
Hence the image of $V(f)$ is the Cartier divisor defined by $f$ in $\spec K[V]^G$.
\end{proof}

\begin{lemma}
\label{l:outerauto}
If $\rho, \rho'$ are representations of $G$ on a vector space $V$, and there exists an automorphism $f$ of $G$ exchanging $\rho$ and $\rho'$, then $\rho$ is pure (resp.~cofree) if and only if $\rho'$ is pure (resp.~cofree).
\end{lemma}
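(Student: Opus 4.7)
The plan is to observe that both purity and cofreeness depend only on the image subgroup $\rho(G) \subseteq \GL(V)$, not on the abstract group $G$ or on the particular homomorphism from $G$ to $\GL(V)$. This is essentially the same mechanism as Lemma \ref{l:pureextension}, now applied to the two factorizations $\rho$ and $\rho' = \rho \circ f$ of the same subgroup inclusion.

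First I would note that because $f$ is an automorphism, it is in particular surjective, so
\[
\rho'(G) \;=\; \rho(f(G)) \;=\; \rho(G)
\]
as subgroups of $\GL(V)$. Consequently, for each $v \in V$ the orbit $\rho'(G)\cdot v = \{\rho(f(g))v : g \in G\}$ coincides set-theoretically with $\rho(G)\cdot v$. Since the notions of stable vector and strictly semi-stable vector are formulated purely in terms of orbits and their dimensions, the loci $V^s$ and $V^{\sss}$ are the same for $\rho$ and $\rho'$. In particular $V^{\sss}$ has pure codimension one in one case if and only if it does in the other, so $\rho$ is pure iff $\rho'$ is pure.

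For cofreeness, the invariant rings in the two actions are literally the same subring of $K[V]$: they both equal the invariants of $\rho(G) = \rho'(G) \subseteq \GL(V)$ acting on $K[V]$ in the standard way. Hence $K[V]$ is free over $K[V]^{G,\rho}$ if and only if it is free over $K[V]^{G,\rho'}$, so $\rho$ is cofree iff $\rho'$ is cofree.

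There is no substantive obstacle here; the lemma records the trivial but useful observation that every invariant-theoretic notion appearing in the paper (orbits, stability, strict semi-stability, invariant ring, and hence purity and cofreeness) is determined by the image of the representation in $\GL(V)$, which is preserved under precomposition with any automorphism of $G$.
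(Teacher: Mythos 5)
Your proposal is correct and follows exactly the paper's argument: the paper's one-line proof also observes that $\rho'(G) = \rho(f(G)) = \rho(G) \subseteq \GL(V)$ and that purity and cofreeness are determined by this image subgroup. Your write-up simply fills in the (routine) details of why orbits, stability loci, and invariant rings coincide.
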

\begin{proof}
Purity and cofreeness are both determined at the level of the image $\rho'(G) = \rho(f(G)) = \rho(G) \subseteq \GL(V)$.
\end{proof}

%\matt{I feel like these paragraphs should be remarks or perhaps a corollary that can be cited later on.}
%\spencer{The lemma \ref{l:outerauto} is currently what is cited in the tables. I've turned these into a remark for now}

\begin{remark}
\label{r:outerauto}
Lemma \ref{l:outerauto} will be used for the spin groups in the following fashion: for each spin group $\Spin_{2n}$ of even order, there exists an outer automorphism exchanging the half-spinor representation $\Gamma_{\omega_{2n}}$ and the half-spinor representation $\Gamma_{\omega_{2n-1}}$, and it follows that the positive half-spinor representation is cofree and pure if and only if the negative half-spinor representation is cofree and pure.

For $\Spin_8$ there is a triality which gives automorphisms exchanging the half-spinor representations and the standard representation.
Since the standard representation is cofree and pure, both half-spinor representations of $\Spin_8$ are cofree and pure.
Moreover, these outer automorphisms send the symmetric square $\Sym^2 \CC^8$ with highest weight $2\omega_1$ to the representations $\Gamma_{2\omega_7}, \Gamma_{2\omega_8}$, which are the irreducible components of the wedge product $\extp^4 \CC^8$.
Since $\Sym^2 \CC^8$ is cofree and pure as a $\Spin_8$ representation, so too are both irreducible components of $\extp^4 \CC^8$.
\end{remark}

For low dimensional special orthogonal Lie algebras, there exist exceptional isomorphisms $\sp(4) = \so(5)$ and $\sl(4) = \so(6)$.
These obviously preserve purity and cofreeness of representations, and so it suffices for the classical groups to prove Theorem \ref{thm:simple} for $\SL_n$ when $n \ge 2$, $\Spin_{2n+1}$ when $n \ge 2$, $\Sp_{2n}$ when $n \ge 3$, and $\Spin_{2n}$ when $n \ge 4$.

\section{Proof of the ``only if'' direction of Theorem \ref{part:general-facts}.\ref{thm:simple}}
\label{sec:polar}
The proof of the ``only if'' direction of Theorem \ref{part:general-facts}.\ref{thm:simple} is relatively straightforward thanks to the work of Dadok and Kac on polar representations \cite{DaKa:85}.
Recall that a representation $V$ of a reductive group $G$ is \emph{polar} if there exists a subspace $\cC$, called a \emph{Cartan subspace}, such that the map $\cC \to \spec K[V^G]$ is finite and surjective.

In \cite[Theorem 2.9]{DaKa:85}, Dadok and Kac proved that if $V$ is polar with Cartan subspace $\cC$, then the group $W = N_G(\cC)/Z_G(\cC)$ is finite and $K[V]^G = K[\cC]^W$.
By \cite[Theorem 2.10]{DaKa:85}, every polar representation is cofree. Furthermore, using the classification of irreducible cofree representations of simple groups, they showed that every irreducible cofree representation of a simple group is polar.

As a result, to prove the ``only if'' direction of Theorem \ref{part:general-facts}.\ref{thm:simple}, it is enough to show that polar representations are pure. We are grateful to Ronan Terpereau for suggesting this proof.

\begin{proposition}
\label{prop:polar-reps-Vsss-div}
If $V$ is a stable polar representation (not necessarily irreducible), then it is pure.
\end{proposition}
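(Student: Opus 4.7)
The plan is to identify $V^{\sss}$ with the $\pi$-preimage of the discriminant divisor of the finite quotient $\cC\to\cC/W$, and then invoke flatness. By Dadok--Kac, $K[V]^G \cong K[\cC]^W$ with $W = N_G(\cC)/Z_G(\cC)$ finite, and $V$ is cofree; since $K[\cC]^W$ is polynomial, Chevalley--Shephard--Todd implies $W$ is a complex reflection group on $\cC$ and $V/G \cong \cC/W$ is smooth. In particular $\pi\colon V\to V/G$ is a flat surjection between smooth varieties.

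First I would express $V^{\sss}$ as a saturated subset. Over any $\bar{p}\in V/G$ the fiber $\pi^{-1}(\bar{p})$ contains a unique closed orbit $O_{\bar{p}}$, and every other orbit in the fiber specializes to $O_{\bar{p}}$. Thus a point $v\in V$ with closed orbit is stable iff $\pi^{-1}(\pi(v)) = Gv$: when the fiber is strictly larger, the closed orbit lies in the closure of strictly larger orbits, so every orbit in the fiber is unstable. Hence $V^{\sss}=\pi^{-1}(\Sigma)$, where $\Sigma\subset V/G$ is the closed locus of points whose fiber is not a single orbit.

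The main step is to identify $\Sigma$ with the image of the reflecting hyperplanes of $W$. For $c\in\cC$, Luna's étale slice theorem presents $V$ locally around $Gc$ as $G\times^{G_c}S_c$, under which $\pi^{-1}([c])$ corresponds to the null cone $\mathcal{N}_{G_c}(S_c)$. By Dadok--Kac, the slice $S_c$ is itself polar with Cartan $\cC$ and little Weyl group $W_c=\stab_W(c)$, so $S_c/G_c \cong \cC/W_c$ and $\dim \mathcal{N}_{G_c}(S_c) = \dim S_c - \dim\cC = \dim G_c - \dim L$, where $L=Z_G(\cC)$. Since the null cone is a cone, it is trivial precisely when its dimension is zero, i.e.~iff $\dim G_c = \dim L$. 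Using the $\cC$-root space decomposition of $\g$, this last condition fails iff some $\cC$-root vanishes at $c$, equivalently iff $c$ lies on a reflecting hyperplane of $W$. Thus $\Sigma = \bar{\Delta}$, the image in $\cC/W$ of the union $\Delta$ of reflecting hyperplanes of $W$.

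Finally, $\Delta\subset\cC$ is pure of codimension one, so $\bar{\Delta}\subset V/G$ is also pure of codimension one. Flatness of $\pi$ then forces $V^{\sss}=\pi^{-1}(\bar{\Delta})$ to be pure of codimension one in $V$, proving $V$ is pure. The most delicate point is the equivalence $\dim G_c>\dim L \iff c\in\Delta$ in the third paragraph; this relies on the polar structure of the slice representation together with the identification of $\cC$-roots with reflections of $W$, both of which are extracted from Dadok--Kac.
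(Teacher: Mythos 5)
Your proof is correct and follows essentially the same route as the paper's: both arguments reduce to the facts that the non-regular locus of the Cartan subspace $\cC$ is a finite union of hyperplanes, that $V^{\sss}$ is the $\pi$-preimage of the image of that locus in $V/G\cong\cC/W$, and that $\pi$ is flat because polar representations are cofree, so that the preimage of a divisor is pure of codimension one. The difference is in the middle step: the paper obtains it directly from Dadok--Kac (Lemma 2.11, which says $\cC_{\mathrm{sing}}$ is a finite union of hyperplanes, combined with $V^{\sss}=G\cC_{\mathrm{sing}}$ and the finiteness of $\cC\hookrightarrow V\to V/G$), whereas you re-derive it via Luna slices and a null-cone dimension count, which is a legitimate but longer path. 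The one imprecise point is your appeal to ``the $\cC$-root space decomposition of $\g$'': for a general polar representation $\cC$ is merely a subspace of $V$, not an abelian subalgebra acting on $\g$, so no such decomposition is available, and the degeneracy locus $\{c\in\cC:\dim G_c>\dim L\}$ is a priori cut out by minors rather than by linear forms. The assertion that this locus is a union of hyperplanes (and that $W$ is generated by reflections in them) is exactly the content of Dadok--Kac's Lemma 2.11 and should be cited as such rather than derived from a root decomposition; with that citation in place your argument is complete.
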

\begin{proof}
Let $\cC$ be a Cartan subspace, and following \cite[p.~506]{DaKa:85}, let $\cC^{reg}$ be the set of regular points.
By definition, $v \in \cC^{reg}$ if and only if $Gv$ is closed and of maximal dimension among closed orbits.
If $V$ is a stable representation then this condition is equivalent to the stability of $v$.
Since the Cartan subspace contains a point of each closed $G$ orbit, it follows that $G\cC^{reg} = V^{s}$.

By \cite[Lemma 2.11]{DaKa:85}, $\cC_{sing} = \cC \setminus \cC^{reg}$ is a finite union of hyperplanes, and $V^{\sss} = G \cC_{sing}$ by definition.
If $W$ is as above, then the image of $\cC_{sing}$ under the quotient map $p \colon \cC \to \cC /W$ is a divisor.
The composition $\cC \hookrightarrow V \stackrel{\pi} \to V/G$ is finite by \cite[Proposition 2.2]{DaKa:85}.
Under the identification $V/G = \cC /W$ this finite map is just the quotient map $\cC \to \cC / W$.
Thus every irreducible component of $V^{\sss} = \pi^{-1}(p(\cC_{sing}))$ is a divisor, because $V \to V/G$ is flat, as $V$ is cofree by \cite[Theorem 2.10]{DaKa:85}. By By Lemma \ref{l:allpure}, we see $V$ is pure.
\end{proof}
\begin{remark}
As noted by Victor Kac, there are polar representations with non-trivial rings of invariants where our proposition does not apply.
However, an analogous statement holds with $V^s$ replaced by the $G$-saturation of the locus of closed orbits, which are of maximal dimension among closed orbits.
\end{remark}

\section{Bounding the dimension of a pure representation} \label{sec:bounding-pure-reps}
We begin by obtaining results that show pure representations are relatively rare; specifically, any simple group has a finite number of pure representations that do not contain a trivial summand.

The following result holds for any reductive group, not just simple or semi-simple ones.

\begin{proposition} \label{prop:hyperplanes}
Let $V$ be a stable representation of a reductive group $G$.
Suppose $V^{\sss}$ contains a divisorial component that maps to a divisor in $V\sslash G$, e.g.~$V$ is pure.
Then there exists one-parameter subgroup $\lambda$ such that
\[ \dim V_\lambda^{0} \geq \dim V - \dim G -1, \]
where $V_\lambda^{0}\subset V$ denotes the 0-weight space of $\lambda$; said differently, there are at most $\dim G+1$ weights that do not lie on the hyperplane of the weight space defined by $\lambda$.
\end{proposition}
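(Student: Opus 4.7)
The plan is to argue by induction on $\dim V$ using a Luna slice reduction at a generic closed orbit lying over $E := \pi(D)$. Take a generic $x \in E$, let $O \subset \pi^{-1}(x)$ be the unique closed orbit, pick $v_0 \in O$, and set $H = G_{v_0}$ (reductive by Matsushima) with $W \subset V$ a Luna slice through $v_0$; the \'etale $G$-equivariant map $G\times^H W \to V$ then induces a local isomorphism $W\sslash H \cong V\sslash G$ near $0 \in W\sslash H$ and $\pi(v_0) \in V\sslash G$. Since $v_0\in V^{\sss}$, we have $\dim O < d$ where $d$ is the maximal $G$-orbit dimension on $V$, and hence $\dim H > \dim G - d$.

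If $\dim O = 0$, then $H = G$ and the generic closed orbit over $E$ is a $G$-fixed point, so the closed embedding $V^G \hookrightarrow V\sslash G$ contains $E$. This gives $\dim V^G \geq \dim E = \dim V - d - 1 \geq \dim V - \dim G - 1$, and since $V^G \subseteq V_\lambda^0$ for every one-parameter subgroup $\lambda$ of $G$, the conclusion holds (take any $\lambda$). Otherwise $\dim O > 0$, so $\dim H < \dim G$ and $\dim W = \dim V - \dim G + \dim H < \dim V$, and induction will apply once I verify that $W$ is a stable, npure $H$-representation. Stability follows from Luna's correspondence of closed orbits, which identifies the maximal $H$-orbit dimension in $W$ as $d - \dim O$, realized by generic points. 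For npurity, \'etaleness of $G\times^H W \to V$ makes the preimage of $D$ equal to $G \times^H (D \cap W)$, forcing $D \cap W$ to have codimension one in $W$. Any top-dimensional irreducible component $D_W$ of $D\cap W$ is then a divisorial component of $W^{\sss}(H)$ (contained there since $D \cap W \subseteq W^{\sss}(H)$ by the orbit correspondence), and its image in $W\sslash H$ corresponds to $E$ under the local isomorphism, so is itself a divisor.

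By induction, $W$ admits a 1-PS $\lambda_H$ of $H$ with $\dim W_{\lambda_H}^0 \geq \dim W - \dim H - 1$. I then view $\lambda_H$ as a 1-PS of $G$ via $H \hookrightarrow G$ and use the $H$-equivariant Luna decomposition $V = \g/\fh \oplus W$, where $\g/\fh \cong T_{v_0}(Gv_0)$ via $X \mapsto X \cdot v_0$, to get
\[
\dim V_{\lambda_H}^0 \;=\; \dim(\g/\fh)_{\lambda_H}^0 + \dim W_{\lambda_H}^0 \;=\; \dim Z_G(\lambda_H) - \dim Z_H(\lambda_H) + \dim W_{\lambda_H}^0.
\]
Substituting the inductive bound together with $\dim W = \dim V - \dim G + \dim H$ and using $Z_H(\lambda_H) \subseteq Z_G(\lambda_H)$ yields $\dim V_{\lambda_H}^0 \geq \dim V - \dim G - 1$, closing the induction.

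The main obstacle will be transferring the npure condition to the slice cleanly, which amounts to verifying both that $D \cap W$ has codimension one in $W$ (from the \'etale Luna chart and preservation of codimension) and that its image in $W\sslash H$ is genuinely a divisor (from the local identification of the GIT quotients at $\pi(v_0)$, combined with the hypothesis that $E$ is a divisor in $V\sslash G$). Once this transfer is in hand, the remainder is dimension bookkeeping and the elementary inclusion $Z_H(\lambda_H) \subseteq Z_G(\lambda_H)$.
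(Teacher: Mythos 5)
Your Luna-slice induction is a genuinely different route from the one the paper takes, and it can be made to work, but it is considerably heavier than necessary. The paper's proof is direct: for a stable $V$, every strictly semi-stable point lies in $\bigcup_{\lambda} GV_\lambda^{\geq 0}$ (a closed orbit that is not stable has positive-dimensional reductive stabilizer, hence contains a one-parameter subgroup), there are only finitely many distinct subspaces $V_\lambda^{\geq 0}$, so the divisorial component $D$ is contained in a single $GV_\lambda^{\geq 0}$; since $\pi(GV_\lambda^{\geq 0})=\pi(V_\lambda^{0})$ (the limit $\lim_{t\to 0}\lambda(t)v$ projects $V_\lambda^{\geq 0}$ onto $V_\lambda^{0}$ inside orbit closures), the hypothesis that $\pi(D)$ is a divisor immediately gives $\dim V_\lambda^0\geq\dim V\sslash G-1\geq\dim V-\dim G-1$. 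What your approach buys is a $\lambda$ coming from the isotropy group of a generic closed orbit over $E$, which is more structural information; what it costs is all the bookkeeping of transferring stability and npurity to the slice representation, plus attention to the fact that the \'etale chart only covers a saturated neighborhood $S$ of the closed orbit, not all of $W$.

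One step of your argument is stated too strongly and needs repair: you claim that \emph{any} top-dimensional component of $D\cap W$ maps to a divisor in $W\sslash H$ ``under the local isomorphism.'' What the \'etale chart actually gives is (i) $\phi^{-1}(D)=G\times^H(D\cap S)$ is \emph{pure} of codimension one (every component of an \'etale preimage of the irreducible divisor $D$ has codimension one), and (ii) the total image $\pi_W(D\cap S)$ equals $\bar\phi^{-1}(E)$, which is pure of codimension one in $S\sslash H$. From these two facts it follows that \emph{some} component of $D\cap S$ is divisorial with divisorial image, and its closure in $W$ is the component of $W^{\sss}(H)$ you need; but a given component of $D\cap S$ could a priori have image of higher codimension, so you must select the right one rather than take an arbitrary one. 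You should also note that the decomposition into $S$ versus $W$ matters when you assert $D\cap W\subseteq W^{\sss}(H)$ (argue on $D\cap S$ and pass to closures), and that in the $\dim O=0$ case a disconnected $G$ gives a finite orbit rather than a fixed point, so you should use $V^{G^0}$, which still lies in every $V^0_\lambda$. With these adjustments the induction closes as you describe; the final inequality only uses $\dim(\g/\fh)^0_{\lambda_H}\geq 0$, so the centralizer computation is not needed.
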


\begin{proof}
First assume that $V$ is properly stable. In this case, every stable vector has finite dimensional stabilizer.
Hence, a vector $v$ is not stable if and only if it contains a point with positive dimensional stabilizer in its orbit closure.
Any closed orbit in a representation is affine so its stabilizer is reductive by \cite{Mat:60}, so if it is positive dimensional then it contains a one-parameter subgroup.
Thus $v \in V^{\sss}$ if and only if there is a 1-parameter subgroup $\lambda$ such that $v \in V_{\lambda}^{\geq 0}$, where $V_{\lambda}^{\geq 0}$ is the subspace of $V$ whose vectors have non-negative weight with respect to $\lambda$.

Since all one-parameter subgroups are $G$-conjugate, it follows that $V^{\sss} = \cup_{\lambda \in N(T)} G V_{\lambda}^{\geq 0}$ where $N(T)$ is the group of one-parameter subgroups of a fixed maximal torus $T$.
Since $V$ is finite dimensional, it contains a finite number of weights, so there are only finitely many distinct subspaces $V_{\lambda}^{\geq 0}$ as $\lambda$ runs through the elements of $N(T)$.
Hence, there exists a one-parameter subgroup $\lambda$ such that $GV_{\lambda}^{\geq 0}$ is the divisorial component of $V^{\sss}$.
Since $GV_{\lambda}^{\geq 0}$ is the $G$-saturation of the fixed locus $V_\lambda^0$, it follows that $\pi(GV_{\lambda}^{\geq 0}) = \pi(V_\lambda^0)$, where $\pi \colon V \to V\sslash G$ is the quotient map.
Hence $\pi(V_{\lambda}^0)$ is a divisor in $V\sslash G$, and it therefore has dimension $\dim V - \dim G - 1$.
Thus $\dim V_{\lambda}^0 \geq \dim V - \dim G -1$.
 
When $V$ is stable but not properly stable, it is still the case that any strictly semi-stable point contains a point with positive dimensional stabilizer in its orbit closure.
The same argument used above implies that $V^{\sss} \subset \bigcup_{\lambda \in N(T)} G V_{\lambda}^{\geq 0}$.
It follows that any divisorial component of $V^{\sss}$ is contained in $GV_{\lambda}^{\geq 0}$ for some $1$-parameter subgroup $\lambda$.
If this divisorial component maps to a divisor, then image of $V^0_\lambda$ contains a divisor, so we conclude that $\dim V^0_{\lambda} \geq \dim V\sslash G -1 \geq \dim V - \dim G -1$.
\end{proof}

\begin{remark}
\label{rem:duals}
It is clear that if a representation can be shown to not be pure by Proposition \ref{prop:hyperplanes}, then its dual representation will also not be pure, since it will have the same number of weights in a hyperplane.
\end{remark}

\begin{example}
In this example, we illustrate that if $V$ is not properly stable, 
then $V^{\sss}$ may be a proper subset of
$\bigcup_{\lambda \in N(T)} G V_{\lambda}^{\geq 0}$. Let $V$ be the adjoint representation of $\SL_2$.
  Then the strictly semi-stable locus $V^{\sss}$ is the divisor defined by the vanishing of the determinant.
  However, since the torus of $\SL_2$ is rank one, the fixed locus $V_{\lambda}^0$ is
  the same for all $\lambda$ and is one dimensional. In this case,
  $V = \bigcup_{\lambda \in N(T)} G V_{\lambda}^{\geq 0}$
  and
  $\pi(V_\lambda^0)
  = \pi(V) = \AA^1$.
  \end{example}

We are grateful to the referee for pointing out the following consequence of Proposition \ref{prop:hyperplanes}, which has been used to significantly simplify the computations in the paper:
\begin{lemma}
\label{l:toral}
Suppose $\{t_1, \ldots, t_k\}$ are a complete set of Weyl group orbit representatives of order $\ell$ toral elements\footnote{That is, elements contained in some fixed choice of maximal torus of $G$ that act on $V$ with eigenvalues of the form $\exp(2\pi \iota k/\ell)$ for some $k$.} in the maximal torus of a simple Lie group $G$.
If $\overline{\pi(V^{t_i})} \subseteq V\sslash G$ has codimension at least 2 for all $i$, then $\codim V^{\sss} \ge 2$.
Moreover,
\[
\dim \overline{\pi(V^{t_i})} \le \dim V^{t_i} \sslash N_{t_i} \le \dim V^{t_i},
\]
where $N_{t_i}$ denotes the normalizer of $t_i$.
\end{lemma}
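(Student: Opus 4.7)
The plan is to combine the Hilbert--Mumford covering of $V^{\sss}$ already established in the proof of Proposition \ref{prop:hyperplanes} with the basic observation that the zero-weight space $V_{\lambda}^{0}$ of a one-parameter subgroup $\lambda$ sits inside the fixed locus of any finite-order element of $\lambda(\mathbb{G}_{m})$.

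To prove the codimension bound, first I would establish the inclusion $\overline{\pi(V^{\sss})} \subseteq \bigcup_{i} \overline{\pi(V^{t_{i}})}$. The proof of Proposition \ref{prop:hyperplanes} gives $V^{\sss} \subseteq \bigcup_{\lambda} G V_{\lambda}^{\ge 0}$, where $\lambda$ ranges over one-parameter subgroups of a fixed maximal torus $T$, and sending $s \to 0$ in $\lambda(s) \cdot v$ shows $\pi(V_{\lambda}^{\ge 0}) = \pi(V_{\lambda}^{0})$. Fixing a primitive $\ell$-th root of unity $\zeta$, the toral element $t := \lambda(\zeta)$ acts on each weight space $V_{\chi}$ by $\zeta^{\langle \chi, \lambda \rangle}$, so $V_{\lambda}^{0} \subseteq V^{t}$. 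Since $t$ is Weyl-conjugate to some $t_{i}$ and $\pi$ is $G$-invariant, $\pi(V_{\lambda}^{0}) \subseteq \overline{\pi(V^{t_{i}})}$, and taking the union over $\lambda$ gives the claimed inclusion; the hypothesis then forces $\codim_{V \sslash G} \overline{\pi(V^{\sss})} \ge 2$.

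Next I would lift this to $\codim_{V} V^{\sss} \ge 2$ using that $V^{\sss}$ is $\pi$-saturated (the paper's definition of stability ensures a stable orbit is alone in its $\pi$-fiber, so $V^{s}$ is a union of fibers) together with the elementary fact that every fiber of $\pi$ is a union of $G$-orbit closures, each of dimension at most $\dim G$. Combined with $\dim V \sslash G = \dim V - \dim G$ in the properly stable setting relevant to the applications of this lemma, this yields
\[
\dim V^{\sss} \le \dim \overline{\pi(V^{\sss})} + \dim G \le \dim V - 2.
\]
The main subtlety I foresee is handling the fiber-dimension bound when $V$ is only stable rather than properly stable; the argument is sharpest in the properly stable case, where equality of generic fiber dimension with $\dim G$ kills the slack.

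Finally, the chain $\dim \overline{\pi(V^{t_{i}})} \le \dim V^{t_{i}} \sslash N_{t_{i}} \le \dim V^{t_{i}}$ follows immediately: $N_{t_{i}}$ is reductive (since $t_{i}$ is semisimple), preserves $V^{t_{i}}$, and the restriction $\pi|_{V^{t_{i}}}$ is constant on $N_{t_{i}}$-orbits, so it factors through the GIT quotient $V^{t_{i}} \sslash N_{t_{i}}$, giving the first inequality; the second is automatic since $V^{t_{i}} \sslash N_{t_{i}}$ is an affine quotient of the vector space $V^{t_{i}}$.
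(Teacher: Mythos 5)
Your first and third steps are sound and coincide with the paper's argument: the covering $V^{\sss}\subseteq\bigcup_{\lambda}GV_{\lambda}^{\ge 0}$ from the proof of Proposition \ref{prop:hyperplanes}, the identity $\pi(GV_{\lambda}^{\ge 0})=\pi(V_{\lambda}^{0})$, the inclusion $V_{\lambda}^{0}\subseteq V^{t}$ for $t=\lambda(\zeta)$ with $\lambda$ primitive, and the factorization of the $N_{t}$-invariant map $\pi|_{V^{t}}$ through $V^{t}\sslash N_{t}$ together give $\codim\overline{\pi(V^{\sss})}\ge 2$ and the ``moreover'' chain.

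The gap is in your second step. The inequality $\dim V^{\sss}\le\dim\overline{\pi(V^{\sss})}+\dim G$ does not follow from the fact that each orbit closure has dimension at most $\dim G$: a single fiber of $\pi$ is a union of infinitely many orbit closures and can have dimension strictly greater than $\dim G$. The null cone, which is one fiber and lies in $V^{\sss}$, already does this whenever $\pi$ is not equidimensional; and proper stability does not help, since upper semicontinuity of fiber dimension only says that the special fibers --- which are exactly the fibers over $\pi(V^{\sss})$ --- are at least as large as the generic one. Concretely, for $\SL_2$ acting on binary forms of degree $10$ (properly stable), $V^{\sss}$ is the $7$-dimensional locus of forms with a root of multiplicity at least $5$, while $\overline{\pi(V^{\sss})}$ is the curve traced by the closed orbits of $cx^5y^5$, so your inequality would assert $7\le 1+3$. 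A related warning sign is that your argument nowhere uses that $G$ is simple, yet the statement fails for general connected reductive $G$: in Example \ref{part:torus-actions}.\ref{ex.purenotnupure} a divisorial component of $V^{\sss}$ maps to a point. The correct bridge from ``$\codim\overline{\pi(V^{\sss})}\ge 2$'' to ``$\codim V^{\sss}\ge 2$'' is Lemma \ref{l:allpure}: since $G$ is connected with no non-trivial characters, any divisorial component of $V^{\sss}$ is the zero locus of an invariant function and hence maps \emph{onto} a divisor of $V\sslash G$; so if $V^{\sss}$ had a divisorial component, its image would be a divisor contained in some $\overline{\pi(V^{t_i})}$, contradicting the hypothesis. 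This is exactly how the paper concludes, by feeding Lemma \ref{l:allpure} into (the proof of) Proposition \ref{prop:hyperplanes}.
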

\begin{proof}
Any 1-parameter subgroup $\lambda$ contains some toral element $t$ of order $\ell$.
Thus an upper bound for $\dim \pi (V^{\lambda})$ is $\max\{ \dim \pi(V^t) \mid t \text{ of order $\ell$} \}$.
In particular, if each $\overline{\pi(V^t)}$ has codimension at least 2, then $\pi (V^{\lambda})$ has codimension at least 2, and by Proposition \ref{prop:hyperplanes}, $\dim V^{\sss} \ge 2$.
Moreover, the morphism $V^t \to \pi(V^t)$ factors through $V^t \sslash N_t$ of $t$, whence $\dim \overline{\pi(V^t)} \le \dim V^{t_i} \sslash N_{t_i} \le \dim V^{t_i}$.
\end{proof}

In practice, Lemma \ref{l:toral} is much easier to use than Proposition \ref{prop:hyperplanes}.
Software packages like LiE \cite{LiE:92} are capable of computing $\dim V^t$ given a representation of a semisimple Lie group and a toral element $t$ of finite order, and so it is possible to show many representations are not pure by checking that $\dim V^t \le \dim V - \dim G - 2$ for all $t$ up to the action of the Weyl group.
For representations %closer to the bound, 
where this bound is not \emph{a priori} clear
and for representations occurring in infinite families, a more careful analysis can be done by hand.
In some exceptional cases, it is still easier to count the number of weights occurring in a hyperplane.

An important use of Proposition \ref{prop:hyperplanes} is to show that a simple Lie group has finitely many pure representations. This can be shown by establishing a bound on the dimension of a pure representation which is polynomial in the rank of the group.
To show this bound, we separately bound with multiplicity the number of zero and non-zero weights that can occur in the representation.
\begin{proposition}
\label{prop:bound-non0-wts}
Let $V$ be a (stable) representation of a simple Lie group $G$, and suppose that $V^{\sss}$ contains a divisorial component.
Then there are at most $\rk(G)(\dim(G) + 1)$ non-zero weights counted with multiplicity.
\end{proposition}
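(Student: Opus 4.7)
The plan is to leverage Proposition \ref{prop:hyperplanes} together with the Weyl group action. That proposition gives us a single one-parameter subgroup $\lambda$ of the maximal torus $T$ such that at most $\dim G + 1$ weights of $V$ (counted with multiplicity) lie outside the hyperplane $H_\lambda = \{\mu \in X^*(T)_{\mathbb{R}} \mid \langle \mu, \lambda\rangle = 0\}$. A single hyperplane is not enough to bound the number of non-zero weights, so the idea is to produce $\rk(G)$ such hyperplanes whose common zero set is $\{0\}$, and then apply a union bound.

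First I would observe that since the set of weights of $V$ (with multiplicity) is stable under the Weyl group $W = N_G(T)/T$, and since $\langle w\mu, \lambda\rangle = \langle \mu, w^{-1}\lambda\rangle$, for every $w \in W$ the Weyl-conjugate one-parameter subgroup $w\lambda$ also has the property that at most $\dim G + 1$ weights of $V$ lie outside $H_{w\lambda}$. Next, because $G$ is simple, the Weyl group acts irreducibly on $X_*(T)_{\mathbb{R}}$, so the $\mathbb{R}$-span of the Weyl orbit $W \cdot \lambda$ is a nonzero $W$-invariant subspace, hence all of $X_*(T)_{\mathbb{R}}$. Consequently, I can extract a basis $\lambda_1, \ldots, \lambda_{\rk(G)}$ of $X_*(T)_{\mathbb{R}}$ from $W \cdot \lambda$.

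The final step is the union bound. Any non-zero weight $\mu$ satisfies $\langle \mu, \lambda_i\rangle \neq 0$ for at least one $i$, simply because the $\lambda_i$ span $X_*(T)_{\mathbb{R}}$ and thus no non-zero element of $X^*(T)_{\mathbb{R}}$ can annihilate all of them. Therefore the set of non-zero weights of $V$, counted with multiplicity, is contained in the union $\bigcup_{i=1}^{\rk(G)} \{\mu \mid \langle \mu, \lambda_i\rangle \neq 0\}$. Each of these sets has size at most $\dim G + 1$, yielding the desired bound of $\rk(G)(\dim G + 1)$.

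I do not anticipate a major obstacle here; the only point that requires a moment of care is invoking the irreducibility of the Weyl group representation on the Cartan, which is precisely where the hypothesis that $G$ is simple (rather than merely semisimple) enters. For semisimple $G$ one would instead have to apply the argument factor-by-factor, but for the simple case the argument is clean.
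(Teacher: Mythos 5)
Your argument is essentially the paper's own proof: both rest on the irreducibility of the Weyl group action on the Cartan (to extract $\rk(G)$ linearly independent conjugates of the hyperplane), and your union bound over the complements of the $H_{\lambda_i}$ is literally the complement of the paper's inclusion--exclusion count of weights inside $H_1\cap\cdots\cap H_{\rk(G)}$.

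One small step is missing at the outset. Proposition \ref{prop:hyperplanes} does not apply under the bare hypothesis that $V^{\sss}$ contains a divisorial component; it requires that this component map to a \emph{divisor} in $V\sslash G$ (its proof uses the dimension of the image to force $\dim V_\lambda^0\geq\dim V-\dim G-1$). You need to first observe that since $G$ is simple it has no non-trivial characters, so by Lemma \ref{l:allpure} (or its proof: the defining equation of a $G$-invariant divisorial component is an eigenfunction, hence invariant) every divisorial component of $V^{\sss}$ automatically maps to a divisor. With that one-line verification inserted, your proof is complete and coincides with the paper's.
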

\begin{proof}
Since $G$ is simple, the image of a divisorial component of $V^{\sss}$ in $V\sslash G$ is also a divisor by Lemma \ref{part:general-facts}.\ref{l:allpure}.
Hence by Proposition \ref{prop:hyperplanes} there is a 1-parameter subgroup $\lambda$ such that $V_\lambda^0$ contains at least $\dim V - \dim(G) - 1$ weights.
Let $H$ be the hyperplane in the character lattice determined by this one-parameter subgroup, so Proposition \ref{prop:hyperplanes} says that $H$ contains at least $\dim V - (\dim(G)+1)$ such weights.
The Weyl group conjugates of $H$ also contain at least $\dim V - (\dim(G)+1)$ weights.
We claim that $H$ has at least $\rk(G)$ linearly independent conjugates under the Weyl group.
Suppose $v$ is a normal vector to $H$.
If $W$ is the Weyl group, then it acts linearly on $\fh^*$, and the representation so obtained is irreducible by \cite[Lemma 14.31]{FuHa:91}. Since the linear span of the orbit $Wv$ is a non-zero $W$-invariant subspace of $\fh^*$, it must be all of $\fh^*$, and so the number of linearly independent conjugates of $v$ under the Weyl group is $\dim \fh^* = \dim \fh = \rk(G)$.

Let $H = H_1, \ldots, H_{\rk(G)}$ be $\rk(G)$ such conjugate hyperplanes whose normal vectors are linearly independent.
By inclusion-exclusion, $H_1 \cap H_2$ contains at least $\dim V - 2(\dim(G) + 1)$ weights counted with multiplicity, since $H_1 \cup H_2$ contains at most $\dim V$ weights.
Assume by induction that $H_1 \cap \cdots \cap H_k$ contains at least $\dim V - k(\dim(G) + 1)$ weights counted with multiplicity.
Since $(H_1 \cap \cdots \cap H_k) \cup H_{k+1}$ still contains at most $\dim V$ weights, the inclusion-exclusion principle implies that $(H_1 \cap \cdots \cap H_k) \cap H_{k+1}$ contains at least $\dim V - (k+1)(\dim(G)+1)$ weights counted with multiplicity.
Hence, $\{0\} = H_1 \cap \cdots \cap H_{\rk(G)}$ contains at least $\dim V - \rk(G)(\dim(G)+1)$ weights counted with multiplicity.
In other words, the multiplicity of the 0 weight in $V$ is at least $\dim V - \rk(G)(\dim(G)+1)$.
\end{proof}

\begin{lemma}
\label{l:0wt-injects-simple-pos-roots}
Let $V$ be an irreducible representation of a semi-simple Lie group $G$, and let $\alpha_i$ be the positive simple roots. If $a$ is any non-highest weight for $V$, then
\[ \dim(V_a) \leq \sum_i \dim(V_{a+\alpha_i}). \]
In particular, if $V$ is not the trivial representation, then $\dim(V_0) \leq \sum_i \dim(V_{\alpha_i})$.
\end{lemma}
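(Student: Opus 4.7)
The plan is to exploit the action of the simple positive root vectors on the weight space $V_a$ and to produce an injective linear map into the direct sum $\bigoplus_i V_{a+\alpha_i}$; injectivity will then yield the dimension inequality immediately. If $a$ is not a weight of $V$, the claim is trivial, so I will assume $V_a \neq 0$ throughout.

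First, fix a Chevalley basis and choose a nonzero root vector $e_i \in \g_{\alpha_i}$ for each simple positive root $\alpha_i$. Define
\[
\phi\colon V_a \longrightarrow \bigoplus_i V_{a+\alpha_i}, \qquad v \longmapsto (e_1 v, e_2 v, \ldots, e_r v),
\]
which is well-defined since $e_i$ carries the $a$-weight space into the $(a+\alpha_i)$-weight space. The desired inequality follows as soon as $\phi$ is shown to be injective.

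The main step is to prove injectivity. Suppose $v \in \ker \phi$, so $e_i v = 0$ for every simple root $\alpha_i$. The nilpotent subalgebra $\n^+ = \bigoplus_{\alpha > 0} \g_\alpha$ is generated as a Lie algebra by the $e_i$: every positive root space $\g_\alpha$ is spanned by iterated brackets of the $e_i$'s, which one proves by induction on the height of $\alpha$. Hence $e_\alpha v = 0$ for all positive roots $\alpha$, so $v$ is a highest weight vector of weight $a$. If $v$ were nonzero, then the cyclic $\g$-submodule $U(\g)\cdot v$ would be a nonzero subrepresentation of $V$; by irreducibility it would equal $V$, and since such a cyclic module has $a$ as its unique highest weight, $a$ would be the highest weight of $V$, contradicting our hypothesis. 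Therefore $v = 0$, and $\phi$ is injective.

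For the "in particular" statement, note that when $V$ is irreducible and nontrivial, its highest weight is a nonzero dominant integral weight, so $0$ is not the highest weight of $V$. Applying the inequality with $a = 0$ yields $\dim V_0 \le \sum_i \dim V_{\alpha_i}$. The only real obstacle is the standard fact that $\n^+$ is generated by the simple root vectors, which is an elementary consequence of the root system structure and requires no subtle computation.
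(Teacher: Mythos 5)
Your proof is correct and takes essentially the same route as the paper: both construct the map $v\mapsto (e_i v)$ into $\bigoplus_i V_{a+\alpha_i}$ and deduce injectivity from the fact that a vector annihilated by all simple root vectors would be a highest weight vector, which is impossible in the weight space of a non-highest weight of an irreducible representation. You merely spell out the two standard facts (that $\mathfrak{n}^+$ is generated by the simple root vectors, and that a highest weight vector generates the irreducible module) that the paper leaves implicit.
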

\begin{proof}
Consider the linear map $V_a \to \bigoplus_i V_{a+\alpha_i}$ given by $v\mapsto (e_i(v))$ where $e_i$ is the root vector in the Lie algebra for $\alpha_i$.
Since $a$ is not a highest weight, $V_a$ does not contain a highest weight vector, that is, no vector $v\in V_a$ is killed by all positive simple roots.
Hence, the above map is injective.
\end{proof}

\begin{lemma}
\label{l:zero-wt-ratio-bd}
Let $V$ be a representation of a simple Lie group $G$ which contains no trivial summands.
Then
\[ \frac{\dim(V_0)}{\dim(V)} \le \frac{\rk(G)}{\dim(G) - \rk(G)}. \]
\end{lemma}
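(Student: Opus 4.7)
The plan is to combine Lemma \ref{l:0wt-injects-simple-pos-roots}, applied to each irreducible summand of $V$, with Weyl group symmetry of weight multiplicities, and reduce the claim to an identity involving the Coxeter number.

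First I would decompose $V = \bigoplus_k V^{(k)}$ into irreducible subrepresentations. Since $V$ contains no trivial summands, each $V^{(k)}$ is nontrivial, so $0$ is not its highest weight. Applying Lemma \ref{l:0wt-injects-simple-pos-roots} to each $V^{(k)}$ and summing gives
\[ \dim V_0 \le \sum_{i=1}^{\rk G} \dim V_{\alpha_i}, \]
where $\alpha_1, \ldots, \alpha_{\rk G}$ are the simple positive roots.

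Next, using that the Weyl group $W$ of $G$ preserves weight multiplicities, $\dim V_\alpha$ depends only on the $W$-orbit of $\alpha$. For simple $G$ the nonzero roots form one or two $W$-orbits $\mathcal{O}_j$ (distinguished by root length). Writing $s_j$ for the number of simple roots in $\mathcal{O}_j$ and $d_j$ for the common value of $\dim V_\alpha$ on $\mathcal{O}_j$, the inequality above becomes $\dim V_0 \le \sum_j s_j d_j$, while $\dim V - \dim V_0 = \sum_j |\mathcal{O}_j| d_j$.

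The final ingredient is the classical identity $|\mathcal{O}_j| = s_j h$ for every $W$-orbit of roots $\mathcal{O}_j$ in a simple Lie algebra, where $h = (\dim G - \rk G)/\rk G$ is the Coxeter number of $G$. Granting this, $\sum_j s_j d_j = \tfrac{1}{h}(\dim V - \dim V_0)$, and hence
\[ (h+1)\dim V_0 \le \dim V, \quad \text{i.e.,} \quad \frac{\dim V_0}{\dim V} \le \frac{1}{h+1} = \frac{\rk G}{\dim G} \le \frac{\rk G}{\dim G - \rk G}. \]
The main obstacle is establishing the identity $|\mathcal{O}_j| = s_j h$. This follows from standard structure theory of root systems (the orbits of a Coxeter element on the roots all have size $h$, and each meets the simple roots in exactly one element), but it can equally well be verified type by type for $A_n, B_n, C_n, D_n, E_6, E_7, E_8, F_4, G_2$ by inspection of the Weyl orbit sizes against simple root counts, which is a routine numerical check in each case.
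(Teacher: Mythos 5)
Your proof is correct, and it takes a genuinely different route from the paper's. The paper argues directly: it sets $d_\alpha=\dim V_\alpha$ for a root $\alpha$, uses $(\dim G-\rk G)d_\alpha+d_0\le\dim V$ together with $d_0\le\rk(G)d_\alpha$ from Lemma \ref{l:0wt-injects-simple-pos-roots}, and divides --- a three-line computation that never invokes Weyl symmetry or the Coxeter number, but which implicitly treats $\dim V_\alpha$ as independent of the root $\alpha$ (true only orbit-by-orbit, so strictly speaking it needs exactly the kind of bookkeeping you supply when $G$ is not simply laced). Your version decomposes the roots into Weyl orbits $\mathcal{O}_j$, uses the $W$-invariance of weight multiplicities, and feeds in the classical identity $|\mathcal{O}_j|=s_jh$ with $h=(\dim G-\rk G)/\rk G$; this is more machinery, but it handles the two-root-length cases cleanly and actually yields the sharper bound $\dim V_0/\dim V\le 1/(h+1)=\rk G/\dim G$, which is attained (e.g.\ by the standard representation of $\Spin_5$) and from which the stated inequality follows trivially. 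One small imprecision: the equality $\dim V-\dim V_0=\sum_j|\mathcal{O}_j|d_j$ should be the inequality $\dim V-\dim V_0\ge\sum_j|\mathcal{O}_j|d_j$, since $V$ may well have nonzero weights that are not roots; fortunately the inequality points in the direction your argument needs, so the conclusion $(h+1)\dim V_0\le\dim V$ is unaffected. The identity $|\mathcal{O}_j|=s_jh$ is indeed standard and your offer to verify it type by type is a legitimate fallback.
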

\begin{proof}
It clearly suffices to prove the lemma for every non-trivial irreducible subrepresentation of $V$, and so we may assume $V$ is irreducible.
Let $d = \dim V$, let $d_0 = \dim V_0$ be the dimension of the 0-weight space, and let $d_\alpha = \dim V_\alpha$ be the dimension of the weight space of any root $\alpha$.
There are $\dim(G) - \rk(G)$ total roots, so we obtain the inequality $(\dim(G) - \rk(G))d_\alpha + d_0 \le d$. Of the $\dim(G) - \rk(G)$ roots, $\rk(G)$ of them are simple, so by Lemma \ref{l:0wt-injects-simple-pos-roots}, $d_0 \le \rk(G)d_\alpha$.
Thus,
\[ \frac{d_0}{d} \le \frac{\rk(G)d_\alpha}{(\dim(G)-\rk(G))d_\alpha + d_0} \le \frac{\rk(G)}{\dim(G)-\rk(G)}. \]
\end{proof}

%% Here is the generalization of lemma 3.6
\begin{proposition}
\label{prop:bounddimV}
Let $V$ be a (stable) representation of a simple Lie group $G$ which contains no trivial summands and such that $V^{\sss}$ contains a divisor.
If $0$ is not a weight of $V$, then $\dim V \le \rk(G)(\dim(G)+1)$.
If $0$ is a weight of $V$, then $\dim V \le \frac{\rk(G)(\dim(G)+1)(\dim(G)-\rk(G))}{\dim(G)-2\rk(G)}$.
\end{proposition}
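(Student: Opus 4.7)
The plan is to combine the two preceding bounds directly: Proposition \ref{prop:bound-non0-wts}, which bounds the total multiplicity of \emph{non-zero} weights, and Lemma \ref{l:zero-wt-ratio-bd}, which bounds the proportion of $\dim V$ coming from the zero weight space. Write $d = \dim V$ and $d_0 = \dim V_0$; the number of non-zero weights counted with multiplicity is exactly $d - d_0$.

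If $0$ is not a weight of $V$, then $d_0 = 0$, so $d = d - d_0 \le \rk(G)(\dim(G)+1)$ by Proposition \ref{prop:bound-non0-wts}, proving the first claim immediately.

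If $0$ is a weight of $V$, then Proposition \ref{prop:bound-non0-wts} gives $d - d_0 \le \rk(G)(\dim(G)+1)$, while Lemma \ref{l:zero-wt-ratio-bd} yields $d_0 \le d \cdot \tfrac{\rk(G)}{\dim(G)-\rk(G)}$. Substituting the second bound into the first and solving the resulting linear inequality in $d$ gives
\[
d \cdot \frac{\dim(G) - 2\rk(G)}{\dim(G) - \rk(G)} \le \rk(G)(\dim(G)+1),
\]
which rearranges to the stated bound, provided $\dim(G) - 2\rk(G) > 0$. The latter is a standard fact for simple Lie groups (one checks case by case from the classification, or simply notes the number of roots exceeds twice the rank in every simple type).

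There is no real obstacle here; the work was done in Proposition \ref{prop:bound-non0-wts} and Lemma \ref{l:zero-wt-ratio-bd}. The only thing to be mildly careful about is ensuring $\dim(G) > 2\rk(G)$ so that the manipulation of inequalities preserves the direction, but this holds for every simple Lie group.
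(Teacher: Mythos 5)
Your proof is correct and is essentially identical to the paper's: both deduce $d - d_0 \le \rk(G)(\dim(G)+1)$ from Proposition \ref{prop:bound-non0-wts}, feed in the ratio bound of Lemma \ref{l:zero-wt-ratio-bd}, and solve the resulting linear inequality for $d$. Your explicit remark that $\dim(G) > 2\rk(G)$ for every simple group (so the inequality direction is preserved) is a small point the paper leaves implicit, but it changes nothing substantive.
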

\begin{proof}
Proposition \ref{prop:bound-non0-wts} proves the claim when $0$ is not a weight.
Assume $0$ is a weight.
Since $V^{\sss}$ is a divisor, it follows from Proposition \ref{prop:bound-non0-wts} that $d \le \rk(G)(\dim(G)+1) + d_0$ where $d = \dim V, d_0 = \dim V_0$.
Then
\[ d \le \rk(G)(\dim(G)+1) + d_0 \le \rk(G)(\dim(G)+1) + \frac{\rk(G)}{\dim(G)-\rk(G)} d \]
by Lemma \ref{l:zero-wt-ratio-bd}, which implies that
\[ d \le \frac{\rk(G)(\dim(G)+1)(\dim(G)-\rk(G))}{\dim(G)-2\rk(G)}.\qedhere \]
\end{proof}

For the classical groups, we make use of slightly weaker bounds than those in Proposition \ref{prop:bounddimV}.

\begin{definition}
\label{def:smallenough}
Let $G$ be a classical group and $V$ an irreducible $G$-representation. Then $V$ is \emph{small enough} if $\dim V\leq\kappa(G)$, where
\[
\kappa(G)=\begin{cases}
n^3, & G=\SL_n \\
2(n^3+n^2+n+1)+1, & G=\Sp_{2n}\\
2(n^3+n^2+n+1) +1, & G=\SO_{2n+1}\\
139, & G=\SO_{8}\\
2(n^3+n+3/2)+1, & G=\SO_{2n}, n > 4.
\end{cases}
\]
In this case, we often say the highest weight of $V$ is small enough.
\end{definition}

\begin{remark}
\label{r:smallenough}
By Proposition \ref{prop:bounddimV}, a pure irreducible representation of a simple Lie group is small enough.
The problem of enumerating the pure representations may thus be reduced to first enumerating the small enough representations, and then determining which of them are pure.
\end{remark}

\begin{remark}
\label{rmk:smallenough-ss}
The bound of Proposition \ref{prop:bounddimV} holds for representations of semisimple Lie groups also.
Unlike the case of simple Lie groups, for semisimple Lie groups difficulties arise in trying to create lists of representations meeting these bounds primarily because if $G_1$ is a simple Lie group with large rank and $G_2$ is a simple Lie group with much smaller rank, then $V_1\otimes V_2$ frequently gives a representation which is small enough but is not pure or cofree.
The sizes of the lists in this case are infeasible to handle with the methods presented here.
\end{remark}

\section{Small enough representations}
\label{sec:small-enough-representations}

Let $\fg$ be a simple Lie algebra with Cartan algebra $\fh$ and fundamental weights $\omega_1, \ldots, \omega_n \in \fh^*$.
Denote by $\Gamma_{a_1\omega_1 + \cdots a_n\omega_n}$ the representation of $\fg$ with highest weight vector $a_1 \omega_1 + \ldots + a_n \omega_n$.
Define the \emph{width} of the vector $a_1 \omega_1 + \ldots + a_n \omega_n$ to be $a_1 + \cdots + a_n$, and define its \emph{support} to be the number of $i$ such that $a_i \neq 0$.
If $\omega = a_1 \omega_1 + \ldots + a_n \omega_n$ is a weight, and $i, j$ are such that $a_i, a_j \neq 0$, define the \emph{shift} $\omega_{i \to j}$ to be $\omega + a_i (\omega_j - \omega_i)$.
Note that $\omega$ and $\omega_{i \to j}$ both have the same width, and the support of $\omega_{i \to j}$ is one less than the support of $\omega$.
We use the following lemma, which is proven along the way to \cite[Lemma 2.1]{GoGuSt:17}.
\begin{lemma}
\label{l:shiftdim}
Suppose $(a_1, \ldots, a_n) \in \NN^n$, and let $\omega = a_1 \omega_1 + \ldots + a_n \omega_n$.
If $i, j$ are distinct indices such that $a_i, a_j \neq 0$, then
\[ \dim \Gamma_{\omega} \ge \min\left(\dim \Gamma_{\omega_{i \to j}}, \dim \Gamma_{\omega_{j \to i}} \right). \]\nobreak\hfill\ensuremath{\square}
\end{lemma}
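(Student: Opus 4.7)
The plan is to exhibit $\omega$, $\omega_{i\to j}$, and $\omega_{j\to i}$ as three values of a single one-parameter family of dominant weights, and then to use log-concavity of the Weyl dimension polynomial along this family. Concretely, I would set $s := a_i + a_j$ and $\mu := \sum_{k\neq i,j} a_k \omega_k$, and consider
\[
\omega(t) := \mu + t\omega_i + (s-t)\omega_j, \qquad t \in [0,s],
\]
so that $\omega(0) = \omega_{i\to j}$, $\omega(s) = \omega_{j\to i}$, and $\omega(a_i) = \omega$. All three of the relevant parameters $0$, $a_i$, and $s$ lie in the closed interval $[0,s]$, which is what makes the interpolation useful.

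Next I would invoke the Weyl dimension formula to extend $\dim \Gamma_{\omega(t)}$ to a polynomial in the real variable $t$:
\[
D(t) := \prod_{\alpha>0} \frac{\langle \mu+\rho,\alpha\rangle + t\langle \omega_i,\alpha\rangle + (s-t)\langle \omega_j,\alpha\rangle}{\langle \rho,\alpha\rangle},
\]
which agrees with $\dim\Gamma_{\omega(t)}$ for each non-negative integer $t \le s$. For every $t \in [0,s]$ the weight $\omega(t)+\rho$ is a sum of $\rho$ and a dominant weight, so it pairs strictly positively with each positive root $\alpha$; hence each factor in the product is a strictly positive affine function of $t$ on $[0,s]$. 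Taking logarithms, $\log D(t)$ becomes a finite sum of logarithms of positive affine functions, and is therefore concave on $[0,s]$. Since a concave function on a closed interval is bounded below by the minimum of its endpoint values, one gets $D(a_i) \ge \min(D(0), D(s))$, which is exactly the claimed inequality.

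The main thing to check carefully is the strict positivity of each Weyl factor throughout the interpolation, because the logarithm is only concave where the argument is positive. This, however, reduces to the standard observation that $\rho$ pairs strictly positively with each positive root while $\mu$, $\omega_i$, and $\omega_j$ are dominant, so that $\langle \omega(t)+\rho,\alpha\rangle > 0$ for all $t\in[0,s]$ and all positive roots $\alpha$. Aside from this essentially formal point, the proof is driven entirely by the log-concavity of positive affine functions; I do not anticipate needing any finer combinatorial input about the root system. If one preferred to avoid the concavity language, the same inequality can be obtained by differentiating $\log D(t)$ directly and showing the derivative is monotone, but the concavity formulation is shortest.
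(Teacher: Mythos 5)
Your argument is correct: the interpolation $\omega(t)=\mu+t\omega_i+(s-t)\omega_j$ does hit $\omega_{i\to j}$, $\omega$, $\omega_{j\to i}$ at $t=0,a_i,s$, each Weyl factor $\langle\omega(t)+\rho,\alpha\rangle$ is a positive affine function of $t$ on $[0,s]$ because $\omega(t)$ stays dominant and $\rho$ pairs strictly positively with every positive root, and log-concavity then forces the value at the interior point to dominate the minimum of the endpoint values. The paper gives no proof of its own — it imports the statement from Goldstein--Guralnick--Stong — and your log-concavity argument via the Weyl dimension formula is precisely the standard mechanism behind that reference, so this is a correct, self-contained proof along essentially the intended lines.
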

In particular, if $\omega$ is small enough, then one of $\omega_{i \to j}, \omega_{j \to i}$ must be small enough.
This suggests the following algorithm to classify the small enough representations:
\begin{enumerate}
\item Find a closed form for the dimension of the irreducible representation with highest weight $\omega_s$ for each $s$, and classify the 1-supported small enough weights using these closed forms.
\item If $\omega$ is a small enough weight with support $\{i,j\}$, use Lemma \ref{l:shiftdim} to restrict the possibilities for $\omega$ and classify the 2-supported small enough weights.
\item If the list of 2-supported small enough weights is not empty, repeat the above proceduce for 3-supported small enough weights, and so forth, until the list of $K$-supported small enough weights is empty for some $K$.
\item If $\omega$ is a weight with support greater than $K$, by repeatedly applying Lemma \ref{l:shiftdim}, its dimension can be bounded below by that of a $K$-supported weight.
Since no $K$-supported weight is small enough, it follows that $\omega$ is not small enough for any support greater than $K$, and the list is complete.
\end{enumerate}

\subsection*{Reading the tables}
Each of the families of Lie groups in this section are accompanied by a list of all their small enough representations, see Tables \ref{t:sl}--\ref{t:spineven}. 
The representations are described by their highest weights, in the notation of \cite{FuHa:91}.

The final column in each table describes whether the indicated representation is cofree and/or pure for the indicated values of $n$.
By the ``only if'' direction of Theorem \ref{part:general-facts}.\ref{thm:simple}, cofree implies pure for a stable irreducible representation of a simple group, so ``cofree'' is written to mean ``cofree, pure, and stable''.
The representations meeting this criterion may be looked up in a list such as \cite{KPV:76}.
Note that these lists contain representations up to outer automorphism; as a result, entries on the tables in this section occasionally do not appear on them.
Such representations are be indicated as cofree with a reference to the relevant tool needed to find it in the list of \cite{KPV:76}.

An entry of ``impure'' similarly means ``stable, impure, and not cofree.''
When it is not otherwise specified, a representation marked ``Impure'' was proven to not be pure by a computer analysis using the software LiE \cite{LiE:92} and Lemma \ref{l:toral}.
For this paper, toral elements of orders 2 and 3 were used in this fashion, and LiE was used to compute the dimensions $\dim V^t$ for all of these $t$.
In the event that $\dim V^t$ is too large, or for infinite families of representations, a reference is be given to a statement in Section \ref{sec:notpure} in which the impurity of the representation is shown.

Representations that are not stable further have ``unstable'' in the final column.
The fact that each small enough representation is either unstable, cofree, or impure, proves Theorem \ref{thm:simple} for simple simply connected Lie groups, and then for all simple Lie groups by Corollary \ref{c:simplyconnected}.

\subsection{The case of $\SL_n$}
A representation $V$ of $\SL_n$ is small enough if $\dim V \le \kappa(\SL_n) = n^3$.
Note that the dual representation $\Gamma_{\sum a_i \omega_i}^*$ is $\Gamma_{a_{n-1} \omega_1 + \cdots + a_1 \omega_{n-1}}$.
It therefore suffices to classify the representations with at least half their support in $a_1, \ldots, a_{\lfloor n/2\rfloor}$, and the others are the dual representations.
By Remark \ref{rem:duals}, duality does not change purity or cofreeness as long as our proofs use Proposition \ref{prop:hyperplanes} to show impurity, and hence these representations shall be be ignored.
For example, in the case of 1-supported weights, we only consider highest weights $t\omega_s$ where $n \ge 2s-1$.

For $\SL_2$, every irreducible representation is of the form $\Sym^k V$ for some $k$, where $V$ is the defining representation.
The maximum number of weights contained in a hyperplane is then $0$ if $k$ is odd, or $1$ if $k$ is even.
So if $k > 4$, then $\Sym^k V$ is not pure by Proposition \ref{prop:hyperplanes}; one can check that when $k \le 4$, the representation is both cofree and pure.
In the sequel we consider representations of $\SL_n$ for $n > 2$.

\begin{table}[]
\begin{tabular}{|l|l|l|l|}
\hline 
1 & Highest weight & Restrictions on $n$ & Purity/Cofreeness \\ \hline
2 & $k\omega_1$ & $n = 2, 1 \le k \le 4$ & Cofree \\
3 & $k\omega_1$ & $n = 2, 5 \le k \le 8$ & Impure \\
4 & $\omega_1$ & $n \ge 3$ & Unstable \\
5 & $2\omega_1$ & $n \ge 3$ & Cofree \\
6 & $3\omega_1$ & $n \le 3$ & Cofree \\
7 & $3\omega_1$ & $n \ge 4$ & Impure, Lemma \ref{l:sln-extra} \\
8 & $4\omega_1$ & $3 \le n \le 17$ & Impure  \\
9 & $5\omega_1$ & $3 \le n \le 4$ & Impure \\
10 & $\omega_2$ & $n \ge 4$ & Cofree; Unstable for odd $n$ \\
11 & $2\omega_2$ & $ n = 4$ & Cofree \\
12 & $2\omega_2$ & $5 \le n \le 12$ & Impure \\
13 & $3\omega_2$ & $n = 4$ & Impure \\
14 & $\omega_3$ & $6 \le n \le 9$ & Cofree \\
15 & $\omega_3$ & $n \ge 10$ & Impure, Lemma \ref{l:sl-omega3}  \\
16 & $2\omega_3$ & $n = 6$ & Impure \\
17 & $\omega_4$ & $n = 8$ & Cofree \\
18 & $\omega_4$ & $9 \le n \le 29$ & Impure \\
19 & $\omega_5$ & $10 \le n \le 15$ & Impure \\
20 & $\omega_6$ & $12 \le n \le 13$ & Impure \\
21 & $\omega_1 + \omega_{n-1}$ & $n \ge 3$ & Cofree (Adjoint) \\
22 & $2\omega_1 + \omega_{n-1}$ & $n \ge 3$ & Impure, Lemma \ref{l:sln-extra} \\
23 & $\omega_1 + \omega_2$ & $n \ge 3$ & Impure, Lemma \ref{l:sln-extra} \\
24 & $\omega_1 + \omega_{n-2}$ & $n \ge 5$ & Impure, Lemma \ref{l:sln-extra} \\
25 & $\omega_1 + \omega_4$ & $n = 8$ & Impure \\
26 & $\omega_1 + \omega_3$ & $6 \le n \le 10$ & Impure \\
27 & $\omega_1 + \omega_{n-3}$ & $7 \le n \le 9$ & Impure \\
28 & $\omega_2 + \omega_3$ & $5 \le n \le 6$ & Impure \\
29 & $\omega_2 + \omega_4$ & $n = 6$ & Impure  \\
30 & $\omega_1 + \omega_2 + \omega_3$ & $n = 4$ & Impure \\
\hline
\end{tabular}
\caption{Small enough representations of $\SL_n$}
\label{t:sl}
\end{table}

\begin{lemma}
\label{l:support-1-sl}
The small enough irreducible representations of $\SL_n$ with 1-supported highest weights are the entries (1)--(20) of Table \ref{t:sl}.
\end{lemma}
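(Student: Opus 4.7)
The plan is to enumerate all pairs $(s,k)$ for which $\dim\Gamma_{k\omega_s}\leq n^3$, together with the ranges of $n$ for which this holds. Since $\Gamma_{k\omega_s}^{*}\cong\Gamma_{k\omega_{n-s}}$ and duality preserves the applicability of Proposition \ref{prop:hyperplanes} by Remark \ref{rem:duals}, it suffices to restrict to $1\leq s\leq n/2$ and treat the others as duals. The rank-one case $n=2$ is handled first and separately: here every irreducible representation is $\Sym^k V\cong\Gamma_{k\omega_1}$ of dimension $k+1$, and the inequality $k+1\leq\kappa(\SL_2)=8$ directly pins down the ranges of $k$ yielding entries (2)--(3).

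For $n\geq 3$ and $k=1$, the Weyl dimension formula specializes to $\dim\Gamma_{\omega_s}=\binom{n}{s}$. A direct check shows that $\binom{n}{s}\leq n^3$ holds automatically for $s\in\{2,3\}$ and all admissible $n$, while for $s\geq 4$ it fails once $n$ exceeds a cutoff that decreases with $s$; in particular, once $s\geq 7$, the minimal admissible $n=2s$ already violates the bound, so no such weight is small enough. Solving explicitly for each $s\in\{2,3,4,5,6\}$ determines the cutoff values of $n$ and produces entries (10), (14)--(15), (17)--(18), (19), and (20).

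For $k\geq 2$ and $s=1$, the dimension is $\binom{n+k-1}{k}$, a degree-$k$ polynomial in $n$; comparing with $n^3$ isolates $k\in\{2,3,4,5\}$ and yields entries (5)--(9). For $s\geq 2$ and $k\geq 2$, the Weyl dimension formula applied to the rectangular partition $(k^s)$ gives
\[
\dim\Gamma_{k\omega_s}=\prod_{i=1}^{s}\prod_{j=s+1}^{n}\frac{k+j-i}{j-i},
\]
which is a polynomial in $n$ of degree $s(n-s)$ strictly increasing in each of $k$, $s$, and $n$ (for $s\leq n/2$). Hence only a small number of pairs $(s,k)$ can meet the bound at any $n$: I would verify directly that $(s,k)\in\{(2,2),(2,3),(3,2)\}$ each admit nontrivial ranges of $n$, producing entries (11)--(13) and (16), while every other pair with $s,k\geq 2$ already exceeds $n^3$ at the smallest admissible $n$ (for example, $(s,k)=(2,4)$ gives $\dim=105>64$ at $n=4$, and $(s,k)=(3,3)$ gives $\dim=980>216$ at $n=6$).

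The main obstacle is organizational rather than conceptual: verifying the precise $n$-ranges in each family requires careful arithmetic with the Weyl dimension formula, and completeness demands ruling out every higher pair $(s,k)$ rather than just a few. The strict monotonicity of $\dim\Gamma_{k\omega_s}$ in $n$ allows each infinite family to be closed off after checking only a handful of small values of $n$, so the total verification, while tedious, reduces to a finite and tractable collection of numerical inequalities.
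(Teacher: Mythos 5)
Your overall strategy is the same as the paper's: reduce the classification of $1$-supported small enough weights to a finite list of numerical inequalities using explicit dimension formulas ($\binom{n}{s}$ for $\omega_s$, $\binom{n+k-1}{k}$ for $k\omega_1$, and the rectangular Weyl/hook-content product for $k\omega_s$ with $s,k\geq 2$), with duality cutting the work in half and $n=2$ treated separately. The resulting list agrees with entries (1)--(20).

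Two points in your treatment of the case $s,k\geq 2$ need repair. First, for fixed $s,k$ the product $\prod_{i=1}^{s}\prod_{j=s+1}^{n}\frac{k+j-i}{j-i}$ is a polynomial in $n$ of degree $sk$ (the number of boxes), not $s(n-s)$ (which is the number of factors and depends on $n$, so cannot be a degree). This matters because your exclusion of a pair $(s,k)$ rests on ``exceeds $n^3$ at the smallest admissible $n$'' together with monotonicity in $n$; since $n^3$ also increases with $n$, that alone proves nothing, and you need the degree-$sk\geq 4$ growth (or a check over the whole relevant range of $n$) to conclude the bound fails for all larger $n$. Second, your claim that $\dim\Gamma_{k\omega_s}$ is strictly increasing in $s$ for $s\leq n/2$ is doing the real work in ruling out all pairs beyond $(2,2),(2,3),(3,2)$, and it is asserted without proof; it is a genuinely stronger statement than anything you have available. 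The paper avoids it by using the citable inequality $\dim\Gamma_{k\omega_s}\geq\dim\Gamma_{k\omega_1}$ from \cite[Lemma 2.2]{GoGuSt:17} to dispose of $k\omega_2,k\omega_3$ with $k\geq4$ (reducing to the finite ranges of $n$ where $k\omega_1$ is itself small enough), and by observing that $k\omega_s$ can only be small enough when $\omega_s$ is (monotonicity in $k$, which is immediate from the product formula), which confines $s\geq4$ to the finite ranges already computed for $\omega_4,\omega_5,\omega_6$. If you either prove your monotonicity in $s$ or substitute the weaker inequalities above, your argument closes; as written, the completeness step has a gap.
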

\begin{proof}
Recall that $\Gamma_{\omega_s} = \extp^s \SL_n$ and so has dimension $\binom{n}{s}$.
When $4 \le s \le \left\lfloor \frac{n}{2} \right\rfloor$, it is clear that this dimension is asymptotically larger than $n^3$, and so there can be at most finitely many small enough such $\omega_s$.
We verify that the following are the only possible representations:
\[ \omega_4 (8 \le n \le 29), \omega_5 (10 \le n \le 15), \omega_6 (12 \le n \le 13). \]
We may also verify that no non-trivial scalar multiples of these representations are small enough.

Since $\Gamma_{t\omega_1} = \Sym^t \SL_n$, it has dimension $\binom{n+t-1}{t}$, and we see again that for $t \ge 4$ there can be at most finitely many solutions.
We can verify that these solutions are exactly
\[ 4\omega_1 (3 \le n \le 17), 5\omega_1 (3 \le n \le 4), 6\omega_1 (n=2), 7\omega_1 (n=2). \]
By \cite[Lemma 2.2]{GoGuSt:17}, $\dim \Gamma_{t\omega_s} \ge \dim \Gamma_{t\omega_1}$ for all $t, s$; using this we check that $t\omega_2$ and $t\omega_3$ are never small enough for $t \ge 4$.

%If $n \ge 5$, then $\binom{n}{3} \le n^3$, and so $\omega_3$ is small enough for such $n$; similarly, $\omega_2$ for $n \ge 3$ and $\omega_1$ for all $n$ are small enough.
%To determine what multiples of $\omega_1, \omega_2, \omega_3$ are small enough, we use \cite[Lemma 2.2]{GoGuSt:17}, which states that $\dim \Gamma_{t\omega_s} \ge \dim \Gamma_{t\omega_1}$ for all $t, s$.
%Since $\Gamma_{t\omega_1} = \Sym^t \SL_n$, it has dimension $\binom{n+t-1}{t}$, and we see again that for $t \ge 4$ there can be at most finitely many solutions.
%On a computer we check that $4\omega_1$ is small enough for all $n \le 17$, and that $4\omega_2, 4\omega_3$ are never small enough for these $n$.
%Similarly, we check that $5\omega_1$ with dimension $\binom{n+4}{5}$ is small enough for $n \le 4$.
%Finally, we check that $6\omega_1$ and $7\omega_1$ are small enough only for $n = 2$.

For $t = 2,3$, we have $\dim \Gamma_{t\omega_1} \le n^3$ for all $n$, and so these representations are both small enough.
It remains only to determine the $n$ for which $2\omega_2$, $2\omega_3$, $3\omega_2$ and $3\omega_3$ are small enough.
The dimension of $2\omega_2$ is $\frac{n^2(n^2-1)}{12}$, which is at most $n^3$ when $4 \le n \le 12$, and we check for $n$ in this range that $3\omega_2$ is only small enough when $n = 4$.
The weight $2\omega_3$ is only small enough when $n = 6$, and $3\omega_3$ is not small enough for this $n$.
This completes the list of 1-supported small enough weights.
\end{proof}

\begin{lemma}
\label{l:support-2-sl}
The small enough 2-supported highest weights of $\SL_n$ are the entries (21)--(29) of Table \ref{t:sl}.
\end{lemma}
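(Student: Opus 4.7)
The plan is to apply Lemma \ref{l:shiftdim} exactly as suggested in the algorithm outlined at the start of Section \ref{sec:small-enough-representations}, using the classification of 1-supported small-enough weights from Lemma \ref{l:support-1-sl} as input. Concretely, given a 2-supported dominant weight $\omega = a_i\omega_i + a_j\omega_j$ with $i < j$ and $a_i,a_j > 0$, its two shifts are the 1-supported weights $\omega_{i\to j} = (a_i+a_j)\omega_j$ and $\omega_{j\to i} = (a_i+a_j)\omega_i$, both of width $w := a_i+a_j$. By Lemma \ref{l:shiftdim}, if $\omega$ is small enough then at least one of $w\omega_i, w\omega_j$ is small enough, and so must appear among entries (2)--(20) of Table \ref{t:sl}.

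First I would use Lemma \ref{l:support-1-sl} to restrict the admissible widths and support positions. For $\SL_n$ with $n \geq 3$, the only 1-supported small-enough weights occurring as possible shift targets have width $w \in \{1,2,3,4,5\}$ with sharply limited support (for example, for $w=2$ only supports in $\{1,2,3\}$ arise; for $w = 4$ the only target is $4\omega_1$; for $w = 5$ one needs $n \leq 4$). Combining this with Remark \ref{rem:duals} to identify $\omega_k$ with $\omega_{n-k}$ up to duality, I can restrict attention to a finite list of candidate index pairs $(i,j)$ and coefficient pairs $(a_i,a_j)$ for each width.

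Next, for each candidate weight $\omega$ on this list, I would compute $\dim \Gamma_\omega$ using the Weyl dimension formula
\[
\dim \Gamma_\omega \;=\; \prod_{\alpha > 0} \frac{\langle \omega + \rho, \alpha\rangle}{\langle \rho, \alpha\rangle},
\]
which for $\SL_n$ yields a polynomial in $n$ whose degree depends only on the shape of the support, and then solve $\dim \Gamma_\omega \leq n^3$ for $n$. For weights of ``fixed shape'' with both indices tied to the extremes of the Dynkin diagram, such as $\omega_1+\omega_{n-1}$ (dimension $n^2-1$) or $2\omega_1+\omega_{n-1}$, the inequality holds for all $n$ past a small threshold, accounting for the infinite families (21), (22). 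For weights with both indices bounded and independent of $n$, such as $\omega_1+\omega_3$, $\omega_2+\omega_3$, or $\omega_2+\omega_4$, the dimension grows as a polynomial of higher degree in $n$, so only finitely many $n$ survive, accounting for the sporadic entries.

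The principal obstacle is the careful bookkeeping required to avoid omissions: one must be vigilant about the asymmetric families such as $\omega_1+\omega_3$ versus $\omega_1+\omega_{n-3}$, which are not dual to each other for small $n$ but become so once $n$ is sufficiently large, and must carefully track when an infinite family collapses to a sporadic case via low-rank coincidences. After enumerating all candidates and comparing dimensions to $n^3$, the surviving small-enough 2-supported weights are exactly the entries (21)--(29) of Table \ref{t:sl}.
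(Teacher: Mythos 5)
Your proposal follows essentially the same route as the paper's proof: apply Lemma \ref{l:shiftdim} to reduce to the 1-supported classification of Lemma \ref{l:support-1-sl}, thereby bounding the admissible widths and support indices, and then finish by explicit dimension computations (the paper uses the tensor decomposition of \cite[Proposition 15.25(i)]{FuHa:91} to get $\dim\Gamma_{\omega_1+\omega_j}=n\binom{n}{j}-\binom{n}{j+1}$ where you invoke the Weyl dimension formula, but this is only a difference in how the same polynomial inequalities in $n$ are produced). The outline is correct, including the attention to the infinite families $\omega_1+\omega_2$, $\omega_1+\omega_{n-2}$, $2\omega_1+\omega_{n-1}$ versus the sporadic low-rank cases.
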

\begin{proof}
By Lemma \ref{l:shiftdim}, if $\omega = \omega_i + \omega_j$ is small enough, then one of $\omega_{i \to j} = 2\omega_j$ and $\omega_{j \to i} = 2\omega_i$ must be small enough.
Therefore, assuming without loss of generality that $i \le \lfloor n/2\rfloor$, we have that $i$ is one of $1, 2, 3$, with the latter two cases only possible if $n \le 12$ and $n = 6$ respectively.
If $i < j$, then we check that $i = 3$ is never small enough, and for $i = 2$ we have the possible cases $\omega_2 + \omega_3$ for $n = 5$, and $\omega_2 + \omega_3, \omega_2 + \omega_4$ for $n = 6$.

Otherwise, $i = 1$.
From \cite[Proposition 15.25(i)]{FuHa:91}, we find that $\Gamma_{\omega_1 + \omega_j} \otimes \extp^{j+1} \SL_n \simeq \extp^j \SL_n \otimes \SL_n$.
Therefore,
\[ \dim \Gamma_{\omega_1 + \omega_j} = n\binom{n}{j} - \binom{n}{j+1}. \]
If $\omega_1 + \omega_j$ is small enough, it must also be the case that $\omega_j$ is small enough.
Therefore, $j \in \{2,3,n-3,n-2,n-1\}$ unless $n$ is in some of the ranges $[8,29], [10,15], [12,13]$, where $j$ could also be in $\{4,n-4\}, \{5,n-5\}$, and $\{6,n-6\}$ respectively.
In the latter case, $\omega_1 + \omega_4$ is small enough for $n = 8$, no $\omega_1 + \omega_5$ are small enough for $n \in [10,15]$, and no $\omega_1 + \omega_6$ are small enough for $n = 12, 13$.
When $j \in \{2,n-2,n-1\}$, this formula shows that $\omega_1 + \omega_j$ is small enough.
For $j = 3$ we use the dimension formula to find that $\omega_1 + \omega_3$ is small enough for $n \le 10$.
Finally, we use the formula again to find that $\omega_1 + \omega_{n-3}$ is small enough for $n \le 9$.

Other than the weights $\omega_1 + \omega_2, \omega_1 + \omega_{n-2}$, and $\omega_1 + \omega_{n-1}$, the two-supported weights of weight two form a finite list, and we may check that increasing the width will not give a small enough weight; for example, $\omega_2 + \omega_3$ for $n = 5$ is small enough, but neither $2\omega_2 + \omega_3$ nor $\omega_2 + 2\omega_3$ is.
In the infinite families $\omega_1 + \omega_2, \omega_1 + \omega_{n-2}, \omega_1 + \omega_{n-1}$, we may again apply \cite[Proposition 15.25(i)]{FuHa:91} to see that all of $2\omega_1 + \omega_2, \omega_1 + 2\omega_2, 2\omega_1 + \omega_{n-2}, \omega_1 + 2\omega_{n-2}$ are not small enough.
The only remaining case is $2\omega_1 + \omega_{n-1}$, which is small enough for all $n$.
Finally, we verify that neither of $3\omega_1 + \omega_{n-1}, 2(\omega_1 + \omega_{n-1})$ are small enough, and so this list of two-supported small enough highest weights is complete.
\end{proof}

\begin{lemma}
\label{l:support-3-sl}
The only small enough 3-supported highest weight of $\SL_n$ is $\omega_1 + \omega_2 + \omega_3$ for $n = 4$.
\end{lemma}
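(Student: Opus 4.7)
The plan is to iterate the shift argument of Lemma \ref{l:shiftdim} which was already used to classify the small enough 1-supported and 2-supported weights. Since $\omega_{i \to j}$ preserves width and strictly decreases support size, any small enough 3-supported weight $\omega$ must admit a 2-supported shift of the same width that is itself small enough; the small enough 2-supported weights are enumerated in the proof of Lemma \ref{l:support-2-sl}. As a first reduction, since that proof verifies no 2-supported weight of width $\geq 4$ is small enough, Lemma \ref{l:shiftdim} rules out 3-supported weights of width $\geq 4$. Hence any candidate has the form $\omega = \omega_i + \omega_j + \omega_k$ with $1 \leq i < j < k \leq n-1$.

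Next I would apply the shift argument at the pair $(i,k)$: one of $\omega_{i \to k} = \omega_j + 2\omega_k$ and $\omega_{k \to i} = 2\omega_i + \omega_j$ must be small enough. For $n \geq 5$, the analysis in the proof of Lemma \ref{l:support-2-sl} (via \cite[Proposition 15.25(i)]{FuHa:91} applied to the infinite families $\omega_1 + \omega_2, \omega_1 + \omega_{n-2}, \omega_1 + \omega_{n-1}$) shows that, up to duality, the only small enough 2-supported width-3 weight is $2\omega_1 + \omega_{n-1}$. Matching coefficients: $\omega_j + 2\omega_k = \omega_1 + 2\omega_{n-1}$ forces $j = 1$, contradicting $i < j$; while $2\omega_i + \omega_j = 2\omega_1 + \omega_{n-1}$ forces $j = n-1$, contradicting $j < k \leq n-1$. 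This rules out all 3-supported small enough weights for $n \geq 5$.

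Finally, for small $n$: there are too few fundamental weights to form a 3-supported weight for $n \leq 3$, and for $n = 4$ the only candidate is $\omega_1 + \omega_2 + \omega_3$. The Weyl dimension formula (applied with partition $l = (3,2,1,0)$) yields $\dim \Gamma_{\omega_1 + \omega_2 + \omega_3} = 2^6 = 64 = \kappa(\SL_4)$, so $\omega_1 + \omega_2 + \omega_3$ is small enough. The main subtlety I anticipate is that the clean shift obstruction used for $n \geq 5$ genuinely fails at $n = 4$: the degeneracy $\omega_2 = \omega_{n-2}$ causes additional width-3 2-supported weights (for instance $2\omega_1 + \omega_2$, of dimension $45$) to become small enough in $\SL_4$, so this last case must be settled by direct dimension computation rather than by the shift method alone.
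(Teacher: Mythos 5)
Your argument is correct and arrives at the same answer, but the core reduction is genuinely different from the paper's, and in fact cleaner. The paper applies Lemma \ref{l:shiftdim} so as to shift away the \emph{middle} index of $\omega=\omega_i+\omega_j+\omega_k$, landing on $2\omega_i+\omega_k$ or $\omega_i+2\omega_k$; matching against $2\omega_1+\omega_{n-1}$ only pins down $i=1$, $k=n-1$, so the paper is left with the entire family $\omega_1+\omega_j+\omega_{n-1}$ and must eliminate it by dimension computations (a closed form via \cite[Proposition 15.25(ii)]{FuHa:91} for $j=2$, plus a finite case check for $2<j<n-1$). You instead shift at the \emph{outer} pair $(i,k)$, so the surviving index $j$ lies strictly between the two support indices of the resulting width-$3$, $2$-supported weight; since $2\omega_1+\omega_{n-1}$ and its dual are supported at the extreme indices $1$ and $n-1$, the ordering $i<j<k$ gives an immediate combinatorial contradiction for $n\ge 5$, with no dimension formulas needed. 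Your handling of $n\le 4$ is also right: after the width-$3$ reduction the unique candidate for $n=4$ is $\omega_1+\omega_2+\omega_3$, of dimension $64=\kappa(\SL_4)$, matching the paper's formula, and you correctly diagnose why the shift obstruction degenerates there. One caveat, which affects your proof and the paper's equally because both quote the width-$3$ portion of Lemma \ref{l:support-2-sl}: the assertion there that $2\omega_1+\omega_2$ is never small enough seems to fail for small $n$ (e.g.\ $\dim\Gamma_{2\omega_1+\omega_2}=105\le 125$ for $\SL_5$ and $210\le 216$ for $\SL_6$), so an airtight version of either argument should fold the weights $\omega_1+\omega_2+\omega_k$ and their duals into a finite check for $n=5,6$; this does not change the conclusion, since those representations turn out not to be small enough.
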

\begin{proof}
A small enough 3-supported highest weight $\omega$ must have width exactly three, since there is no small enough 2-supported weight with width larger than 3.
Thus, $\omega = \omega_i + \omega_j + \omega_k$ for some $i,j,k$, and since one of $\omega_{j \to i}, \omega_{j \to k}$ must be small enough, we have either $\omega_{j \to i} = 2\omega_1 + \omega_{n-1}$ or $\omega_{j \to k} = 2\omega_1 + \omega_{n-1}$.
Suppose it is the former without loss of generality.
Then $i = 1$ and $k = n-1$, so $\omega = \omega_1 + \omega_j + \omega_{n-1}$.
Note that $\Gamma_\omega$ is contained in $\Gamma_{\omega_1 + \omega_j} \otimes V^*$.
The exact decomposition is given by \cite[Proposition 15.25(ii)]{FuHa:91}: we conclude that for $j = 2$,
\[ \dim \Gamma_\omega = n\left(n\binom{n}{2} - \binom{n}{3}\right) - \binom{n+1}{2} - \binom{n}{2},\]
and so $\Gamma_\omega$ is small enough in this case only when $n = 4$ and $\omega = \omega_1 + \omega_2 + \omega_3$.
When $2 < j < n-1$, there are only finitely many $(j,n)$ for which $\omega_1 + \omega_j$ is small enough, and we may check that none of them yield small enough $\omega_1 + \omega_j + \omega_{n-1}$.
\end{proof}

\begin{proposition}
\label{l:support-4-sl}
No $k$-supported highest weight of $\SL_n$ is small enough for $k > 3$, and hence the lists of the previous three lemmas are complete.
\end{proposition}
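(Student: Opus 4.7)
The plan is to argue by induction on the support $k$, using Lemma \ref{l:shiftdim} to reduce to the cases already handled by Lemmas \ref{l:support-1-sl}, \ref{l:support-2-sl}, and \ref{l:support-3-sl}. The key preliminary observation is that a $k$-supported weight has support contained in $\{1,\ldots,n-1\}$, so its existence forces $n\ge k+1$.

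For the base case $k=4$, suppose $\omega=\sum a_i\omega_i$ is a small enough 4-supported highest weight of $\SL_n$; then $n\ge 5$. Pick any two indices $i,j$ in the support of $\omega$. By Lemma \ref{l:shiftdim}, at least one of $\omega_{i\to j}$, $\omega_{j\to i}$ has dimension no larger than $\dim\Gamma_\omega$, hence is also small enough; and by construction its support drops to at most $3$. By Lemma \ref{l:support-3-sl}, the only small enough $3$-supported weight is $\omega_1+\omega_2+\omega_3$ on $\SL_4$, which forces $n=4$ and contradicts $n\ge 5$.

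For the inductive step $k\ge 5$, the same shift construction produces a small enough $(k-1)$-supported weight from a hypothetical small enough $k$-supported one, and the inductive hypothesis closes the argument. Combined with the classifications in Lemmas \ref{l:support-1-sl}--\ref{l:support-3-sl}, this exhausts all possibilities for small enough highest weights of $\SL_n$, so the tables are complete.

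The main subtlety is bookkeeping the shifts: although the \emph{width} is preserved by $\omega_{i\to j}$, the support strictly decreases because $a_i\ne 0$ is guaranteed by $i$ lying in the support of $\omega$. A second point to verify is that after shifting we may land on a weight of width greater than the representatives explicitly listed in the support-$3$ classification; but this is harmless because Lemma \ref{l:support-3-sl} already asserts that $\omega_1+\omega_2+\omega_3$ (on $\SL_4$) is the \emph{only} small enough $3$-supported weight of any width, so the contradiction with $n\ge 5$ still applies. No case analysis beyond invoking the earlier lemmas is required.
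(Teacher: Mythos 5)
Your proof is correct and follows the paper's strategy: iterate Lemma \ref{l:shiftdim} to drop the support one step at a time and then invoke the support-$3$ classification of Lemma \ref{l:support-3-sl}. The only difference is cosmetic and lies in the final contradiction --- the paper notes that shifts preserve width, so the resulting $3$-supported weights have width at least $k>3$ and cannot equal $\omega_1+\omega_2+\omega_3$, whereas you use the rank constraint $n\ge k+1\ge 5$ against the fact that the unique small enough $3$-supported weight lives on $\SL_4$; both closings are valid.
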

\begin{proof}
If $\omega$ is $k$-supported for $k > 3$, then $\omega$ has width at least $k$.
By applying $k-3$ shift operations in every possible way, we get a list of 3-supported weights $\alpha_1, \ldots, \alpha_m$, all of width at least $k$, such that
\[ \dim \Gamma_\omega \ge \min_i \dim \Gamma_{\alpha_i} \]
by Lemma \ref{l:shiftdim}.
But the only small enough weights of support 3 have width 3, so none of the $\alpha_i$ are small enough, and hence $\omega$ is not small enough.
\end{proof}

\vspace{1em}
The cases of $\Sp_{2n}$, $\Spin_{2n+1}$, and $\Spin_{2n}$ proceed in the same fashion as $\SL_n$ and are, in fact, easier to handle so their proofs are omitted.

\subsection{The case of $\Sp_{2n}$}

\begin{table}[h!]
\begin{tabular}{|l|l|l|l|}
\hline 
& Highest weight & Restrictions on $n$ & Purity/Cofreeness \\ \hline
1 & $\omega_1$ & $n \ge 3$ & Unstable \\
2 & $2\omega_1$ & $n \ge 3$ & Cofree \\
3 & $3\omega_1$ & $n \ge 3$ & Impure, Lemma \ref{l:sp2n-extra} \\
4 & $\omega_2$ & $n \ge 3$ & Cofree \\
5 & $\omega_3$ & $n \ge 4$ & Impure, Lemma \ref{l:sp2n-extra} \\
6 & $\omega_4$ & $n = 4$ & Cofree \\
7 & $\omega_4$ & $5 \le n \le 6$ & Impure \\
8 & $\omega_5$ & $n = 5$ & Impure \\
9 & $\omega_6$ & $n = 6$ & Impure \\
10 & $\omega_1 + \omega_3$ & $n = 3$ & Impure \\
11 & $\omega_1 + \omega_2$ & $3 \le n \le 4$ & Impure \\
\hline
\end{tabular}

\caption{Small enough representations of $\Sp_{2n}$}
\label{t:sp2n}
\end{table}
Listed in Table \ref{t:sp2n} are the small enough representations of $\Sp_{2n}$ for $n \ge 3$.
A representation $V$ of $\Sp_{2n}$ is small enough if $\dim V \le \kappa(\Sp_{2n}) = 2(n^3 + n^2 + n + 1)+1$.

%The proof of the following proposition proceeds in the same fashion as $\SL_n$, and is omitted.
\begin{proposition}
Table \ref{t:sp2n} is a complete list of the highest weights of the small enough irreducible rational representations of the symplectic groups $\Sp_{2n}$ for $n \ge 3$.
\end{proposition}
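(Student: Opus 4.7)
The plan is to mirror the argument used for $\SL_n$ in Lemmas \ref{l:support-1-sl}--\ref{l:support-4-sl} and Proposition \ref{l:support-4-sl}, proceeding by induction on the support of the highest weight and repeatedly invoking the shift inequality of Lemma \ref{l:shiftdim}. For $\Sp_{2n}$ the relevant dimension formulas are explicit: $\dim \Gamma_{t\omega_1} = \dim \Sym^t \CC^{2n} = \binom{2n+t-1}{t}$, and $\Gamma_{\omega_k}$ sits inside $\extp^k \CC^{2n}$ with dimension $\binom{2n}{k} - \binom{2n}{k-2}$ for $k \le n$. These, together with the bound $\kappa(\Sp_{2n}) = 2(n^3+n^2+n+1)+1$, are the only inputs needed for the one-supported case.

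First I would handle the $1$-supported weights $t\omega_s$. Using the symmetric power formula, $\dim\Gamma_{t\omega_1}\le \kappa(\Sp_{2n})$ forces $t\le 3$, with $t=1,2$ giving a small enough representation for every $n\ge 3$ and $t=3$ also small enough for every $n\ge 3$; one checks directly that $t\omega_1$ is not small enough for $t\ge 4$. For $s\ge 2$, the inequality $\dim\Gamma_{t\omega_s}\ge \dim\Gamma_{t\omega_1}$ from \cite[Lemma 2.2]{GoGuSt:17} immediately forces $t=1$. Then using the fundamental dimension formula $\binom{2n}{s}-\binom{2n}{s-2}$, which grows super-polynomially in $n$ once $s$ is bounded away from the endpoints, one reads off the finite list $\omega_2$ ($n\ge 3$), $\omega_3$ ($n\ge 4$), $\omega_4$ ($4\le n\le 6$), $\omega_5$ ($n=5$), $\omega_6$ ($n=6$); for $s\ge 7$ the dimension already exceeds $\kappa(\Sp_{2n})$ in every admissible $n$.

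Next I would handle the $2$-supported weights. By Lemma \ref{l:shiftdim}, if $\omega = a\omega_i+b\omega_j$ is small enough then so is one of its two shifts $\omega_{i\to j}$ or $\omega_{j\to i}$; since every small enough $1$-supported weight has width at most $3$, this forces the total width of $\omega$ to be at most $3$. So $\omega\in\{\omega_i+\omega_j,\ 2\omega_i+\omega_j,\ \omega_i+2\omega_j\}$ with $i<j$. For each such $\omega$, both shifts land in the list just produced, which restricts $(i,j)$ to pairs consistent with that list. A direct computation using either LiE or Weyl's character formula then confirms that among these candidates only $\omega_1+\omega_2$ for $3\le n\le 4$ and $\omega_1+\omega_3$ for $n=3$ remain small enough, and all other shift-compatible candidates (e.g.\ $2\omega_1+\omega_j$, $\omega_1+2\omega_j$) exceed the bound.

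Finally I would rule out any weight of support $k\ge 3$. If $\omega$ has support $k$, iterating $k-2$ shift operations produces a finite list of $2$-supported weights $\alpha_1,\dots,\alpha_m$, each of width at least $k\ge 3$, with $\dim\Gamma_\omega\ge \min_i\dim\Gamma_{\alpha_i}$. Since every small enough $2$-supported weight exhibited above has width $\le 3$, none of the $\alpha_i$ can be small enough once $k\ge 4$, and in the $k=3$ case a similar direct check against the above two-supported list shows no candidate survives. Combining the three steps yields precisely the entries of Table \ref{t:sp2n}. The main obstacle, as with $\SL_n$, is bookkeeping: ensuring one does not overlook a sporadic coincidence in small rank. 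This is why an explicit dimension calculation (by computer or by hand with Weyl's formula) is required for each of the finitely many candidate pairs surviving the shift reduction.
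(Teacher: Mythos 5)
Your proposal follows exactly the strategy the paper intends: the paper explicitly omits the $\Sp_{2n}$ proof with the remark that it ``proceeds in the same fashion as $\SL_n$,'' and your induction on support via Lemma \ref{l:shiftdim}, starting from the closed forms $\dim\Gamma_{t\omega_1}=\binom{2n+t-1}{t}$ and $\dim\Gamma_{\omega_k}=\binom{2n}{k}-\binom{2n}{k-2}$, is precisely that argument. Two small points. First, the step ``$\dim\Gamma_{t\omega_s}\ge\dim\Gamma_{t\omega_1}$ immediately forces $t=1$ for $s\ge 2$'' is an overstatement: that inequality only rules out $t\ge 4$, so you must still check $t=2,3$ for $s\ge 2$ by hand (as the $\SL_n$ proof does for $2\omega_2$, $3\omega_2$, $2\omega_3$, $3\omega_3$); the conclusion survives, but only barely --- e.g.\ $\dim\Gamma_{2\omega_3}(\Sp_6)=84$ and $\dim\Gamma_{2\omega_2}(\Sp_6)=90$ against $\kappa(\Sp_6)=81$ --- so these checks cannot be waved away. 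Second, your own formula gives $\dim\Gamma_{\omega_3}(\Sp_6)=\binom{6}{3}-6=14\le 81$, so $\omega_3$ is in fact small enough for $n=3$ as well (it is the cofree $14$-dimensional representation of $\Sp_6$); you have inherited this restriction to $n\ge 4$ from Table \ref{t:sp2n}, but a completeness proof run honestly through your algorithm would produce the $n=3$ case too.
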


\subsection{The case of $\Spin_{2n+1}$}

\begin{table}[h!]
\begin{tabular}{|l|l|l|l|}
\hline 
& Highest weight & Restrictions on $n$ & Purity/Cofreeness \\ \hline
1 & $\omega_1$ & $n \ge 2$ & Cofree \\
2 & $2\omega_1$ & $n \ge 2$ & Cofree \\
3 & $3\omega_1$ & $n \ge 2$ & Impure, \ref{l:spin-odd-extra} \\
4 & $\omega_2$ & $n \ge 3$ & Cofree (Adjoint) \\
5 & $2\omega_2$ & $n = 2$ & Cofree (Adjoint) \\
6 & $2\omega_3$ & $n = 3$ & Impure, \ref{l:spin-odd-extra} \\
7 & $\omega_3$ & $n \ge 4$ & Impure, \ref{l:spin-odd-extra} \\
8 & $\omega_n$ & $2 \le n \le 6$ & Cofree \\
9 & $\omega_7$ & $n = 7$ & Impure, Lemma \ref{l:spin15} \\
10 & $\omega_8$ & $n = 8$ & Impure, Lemma \ref{l:spin17} \\
11 & $\omega_n$ & $9 \le n \le 11$ & Impure \\
12 & $2\omega_4$ & $n = 4$ & Impure \\
13 & $3\omega_2$ & $n = 2$ & Impure \\
14 & $\omega_1 + \omega_n$ & $2 \le n \le 4$ & Impure \\
\hline
\end{tabular}
\caption{Small enough representations of $\Spin_{2n+1}$}
\label{t:spinodd}
\end{table}

Listed in Table \ref{t:spinodd} are the small enough representations of $\Spin_{2n+1}$ for $n \ge 2$.
Let $L_1, \ldots, L_n$ be the standard dual basis in $\fh^*$, so the fundamental weights are $\omega_i = L_1 + \cdots + L_i$ for $i < n$ and $\omega_n = \frac{1}{2} (L_1 + \ldots + L_n)$.
A representation $V$ of $\Spin_{2n+1}$ is small enough if $\dim V \le \kappa(\Spin_{2n+1}) = 2(n^3 + n^2 + n + 1)+1$.

%The proof of the following proposition proceeds in the same fashion as $\SL_n$, and is omitted.
\begin{proposition}
Table \ref{t:spinodd} is a complete list of the highest weights of the small enough irreducible rational representations of the odd spin groups $\Spin_{2n+1}$ for $n \ge 2$.
\end{proposition}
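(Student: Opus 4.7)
The argument parallels the $\SL_n$ case from Lemmas \ref{l:support-1-sl}--\ref{l:support-4-sl}: I classify the small enough irreducible highest weights of $\Spin_{2n+1}$ by support size, repeatedly invoking Lemma \ref{l:shiftdim} to reduce higher-support cases to lower ones.

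First I treat the 1-supported weights $t\omega_s$. The relevant dimension formulas are $\dim\Gamma_{\omega_s}=\binom{2n+1}{s}$ for $1\le s<n$, and $\dim\Gamma_{\omega_n}=2^n$ for the spinor representation. Comparing with the bound $\kappa(\Spin_{2n+1})=2(n^3+n^2+n+1)+1$, among the non-spinor fundamentals only $\omega_1,\omega_2,\omega_3$ can be small enough for arbitrary $n$. The multiples $\Gamma_{t\omega_1}$ are the trace-free symmetric tensors of dimension $\binom{2n+t}{t}-\binom{2n+t-2}{t-2}$; a direct computation shows they are small enough exactly for $t\le 3$. Using \cite[Lemma 2.2]{GoGuSt:17} to bound $\dim\Gamma_{t\omega_s}\ge\dim\Gamma_{t\omega_1}$ rules out most other $t\omega_s$ with $s\ge 2$, leaving only the sporadic entries $2\omega_2$ and $3\omega_2$ for $n=2$, $2\omega_3$ for $n=3$, and $2\omega_4$ for $n=4$.

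For 2-supported weights $a\omega_i+b\omega_j$, Lemma \ref{l:shiftdim} forces one of $(a+b)\omega_i$ or $(a+b)\omega_j$ to be small enough; combined with the 1-supported classification this leaves a short finite list from which a direct check extracts only $\omega_1+\omega_n$ for $2\le n\le 4$. Since every small enough 2-supported weight in the list has width exactly two, no small enough weight of support at least three can exist: such a weight has width at least three, and iterated shifts produce a 2-supported weight of the same width, contradicting the previous step.

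The chief obstacle lies in handling the spinor representation $\Gamma_{\omega_n}$, whose dimension $2^n$ grows exponentially rather than polynomially; the inequality $2^n\le\kappa(\Spin_{2n+1})$ fails only for $n\ge 12$, so one must verify each $n\in\{2,\ldots,11\}$ individually to produce lines (8)--(11) of Table \ref{t:spinodd}. Tensor products involving $\omega_n$, such as $\omega_1+\omega_n$, require similarly careful bookkeeping, but once the 1-supported list is in hand the remainder of the classification is a routine finite computation mirroring the $\SL_n$ argument.
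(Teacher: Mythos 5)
Your argument is correct and is exactly the approach the paper intends: it omits the $\Spin_{2n+1}$ proof precisely because it ``proceeds in the same fashion as $\SL_n$,'' i.e.\ induction on the support of the highest weight via Lemma \ref{l:shiftdim}, which is what you carry out. The numerical checkpoints you cite (the formula $\binom{2n+t}{t}-\binom{2n+t-2}{t-2}$ for $\Gamma_{t\omega_1}$, the bound $2^n\le\kappa(\Spin_{2n+1})$ failing first at $n=12$, the sporadic cases $2\omega_2,3\omega_2,2\omega_3,2\omega_4$, and $\omega_1+\omega_n$ for $2\le n\le 4$ as the only $2$-supported entry) all check out against Table \ref{t:spinodd}.
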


\subsection{The case of $\Spin_{2n}$}

\begin{table}[h!]
\begin{tabular}{|l|l|l|l|}
\hline 
& Highest weight & Restrictions on $n$ & Purity/Cofreeness \\ \hline
1& $\omega_{n-1}, \omega_n$ & $n=5,6,7,8$ &  Cofree \\
2& $\omega_8, \omega_9$ & $n = 9$ & Impure, Lemma \ref{l:spin18} \\
3& $\omega_1$ & $n \ge 4$ & Cofree \\
4& $\omega_2$ & $n \ge 4$ & Cofree (Adjoint) \\
5& $\omega_3$ & $n \ge 5$ & Impure, Lemma \ref{l:spin-even-extra} \\
6& $2\omega_1$ & $n \ge 4$ & Cofree \\
7& $3\omega_1$ & $n \ge 4$ & Impure, Lemma \ref{l:spin-even-extra} \\
8& $\omega_3, \omega_4$ & $n = 4$ &  Cofree, Remark \ref{r:outerauto} \\
9& $2\omega_3, 2\omega_4$ & $n = 4$ & Cofree, Remark \ref{r:outerauto} \\
10& $3\omega_3, 3\omega_4$ & $n = 4$ & Impure \\
11& $2\omega_4, 2\omega_5$ & $n = 5$ & Unstable \\
12& $\omega_{n-1} + \omega_{n}$ & $n = 4, 5$ & Impure \\
13& $\omega_1 + \omega_{n-1}, \omega_1 + \omega_n$ & $n = 4, 5, 6$ &Impure \\
\hline
\end{tabular}
\caption{Small enough representations of $\Spin_{2n}$}
\label{t:spineven}
\end{table}

Listed in Table \ref{t:spineven} are the small enough representations of $\Spin_{2n}$ for $n \ge 4$.
Let $L_1, \ldots, L_n$ be the standard basis in $\fh^*$, and then we have the fundamental weights $\omega_i = L_1 + \cdots + L_i$ for $i \le n-2$, and $\omega_{n-1} = \frac{1}{2}(L_1 + \ldots + L_{n-1} - L_n)$, $\omega_n = \frac{1}{2} (L_1 + \ldots + L_n)$.
A representation of $\Spin_{2n}$ is small enough if $\dim V \le \kappa(\Spin_{2n}) = 2(n^3 + n + 3/2)+1$ for $n > 4$, or $\dim V \le 139$ when $n = 4$.

%The proof of the following proposition proceeds in the same fashion as $\SL_n$, and is omitted.
\begin{proposition}
Table \ref{t:spineven} is a complete list of the highest weights of the small enough irreducible rational representations of the odd spin groups $\Spin_{2n}$ for $n \ge 4$.
\end{proposition}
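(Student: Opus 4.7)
The plan is to follow the same algorithm used for $\SL_n$, $\Sp_{2n}$, and $\Spin_{2n+1}$: first classify the $1$-supported small enough highest weights using closed-form dimension formulas for fundamental representations and their symmetric multiples, then reduce the $k$-supported case to the $(k-1)$-supported case via Lemma \ref{l:shiftdim}, and finally argue termination once no small enough weights of some support $K$ remain. The relevant bound is $\dim V \le 2(n^3+n+3/2)+1$ for $n > 4$, or $139$ for $n=4$. Throughout, by Remark \ref{r:outerauto}, it suffices to treat the half-spin representations $\Gamma_{\omega_{n-1}}$ and $\Gamma_{\omega_n}$ in pairs.

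For $1$-supported weights $t\omega_s$, I would use that $\dim \Gamma_{\omega_i} = \binom{2n}{i}$ for $i \le n-2$ and $\dim \Gamma_{\omega_{n-1}} = \dim \Gamma_{\omega_n} = 2^{n-1}$. A direct comparison against the bound shows that $\omega_1, \omega_2, \omega_3$ are small enough for all $n \ge 4$ (accounting for entries 3--5 of Table \ref{t:spineven}), while $\omega_i$ for $4 \le i \le n-2$ is never small enough when $n \ge 5$. The half-spin representations have dimension $2^{n-1}$, which lies within the cubic bound precisely for $4 \le n \le 9$, splitting as entries 1 and 2. For the symmetric powers, $\dim \Gamma_{t\omega_1} = \binom{2n+t-1}{t}$ stays within the bound only up to $t = 3$, giving entries 6 and 7. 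Meanwhile $\dim \Gamma_{t\omega_i} \ge \dim \Gamma_{t\omega_1}$ for $i \ge 2$ by \cite[Lemma 2.2]{GoGuSt:17}, ruling out all other $t\omega_i$ with $i > 1$ except for a finite check in the half-spin case at small $n$, which contributes entries 8--11.

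For $2$-supported weights $\omega = a_i\omega_i + a_j\omega_j$, Lemma \ref{l:shiftdim} forces one of the shifts $\omega_{i\to j}, \omega_{j\to i}$ to be small enough, reducing the problem to a finite list of candidates built from the $1$-supported weights above. The infinite families $\omega_1 + \omega_{n-1}$, $\omega_1 + \omega_n$, and $\omega_{n-1} + \omega_n$ require explicit dimension formulas (from \cite[Proposition 15.25]{FuHa:91} or Weyl's formula), which cut these down to the finite ranges of $n$ recorded in entries 12 and 13. For $k$-supported weights with $k \ge 3$, such a weight has width at least $k$, and iterated application of Lemma \ref{l:shiftdim} bounds its dimension from below by that of a $2$-supported weight of width $\ge k$; since Table \ref{t:spineven} contains no such small enough $2$-supported weight of width exceeding $2$, no $k$-supported weights are small enough for $k \ge 3$.

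The main subtlety is the case $n = 4$, where the bound $139$ is smaller and the triality automorphism of $\Spin_8$ makes the fundamental weights $\omega_3$ and $\omega_4$ behave as half-spin representations; as remarked in Remark \ref{r:outerauto}, triality additionally identifies $\Sym^2 \CC^8$ with $\Gamma_{2\omega_3}$ and $\Gamma_{2\omega_4}$. This case must be verified by direct enumeration of weights below the bound, but it is finite and the answer aligns with entries 8--10 and 12--13 restricted to $n=4$. Combining the $1$-supported, $2$-supported, and higher-support analyses yields Table \ref{t:spineven} as claimed.
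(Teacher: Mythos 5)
The paper offers no written proof of this proposition: it declares that the $\Spin_{2n}$ case ``proceeds in the same fashion as $\SL_n$,'' and your proposal follows exactly that template (closed-form dimensions for the $1$-supported weights, Lemma \ref{l:shiftdim} to climb in support, termination once some support level is empty), so your overall strategy is the intended one. However, there is a concrete arithmetic gap at the half-spinor step. You assert that $2^{n-1}$ lies within the bound $\kappa(\SO_{2n})=2(n^3+n+3/2)+1$ precisely for $4\le n\le 9$. This is false: for $n=10$ one has $2^{9}=512\le 2024$, and the inequality $2^{n-1}\le 2n^3+2n+4$ persists through $n=13$ (where $2^{12}=4096\le 4424$), failing only from $n=14$ onward ($2^{13}=8192>5520$). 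The same conclusion holds under the sharper bound $\rk(G)(\dim(G)+1)=n(2n^2-n+1)$ available for representations without a zero weight, since $4096\le 13\cdot 326=4238$. So the half-spinor representations of $\Spin_{2n}$ for $10\le n\le 13$ are small enough in the sense of Definition \ref{def:smallenough}, and your enumeration does not dispose of them. (Contrast the odd case, where Table \ref{t:spinodd} correctly carries the spin representation $\omega_n$ of $\Spin_{2n+1}$ up to $n=11$, the last $n$ with $2^{n}\le\kappa(\SO_{2n+1})$.) Since Table \ref{t:spineven} also stops the half-spinor family at $n=9$, these four cases are unaccounted for both in your argument and in the table; to establish the proposition (and the downstream proof of Theorem \ref{part:general-facts}.\ref{thm:simple}), they must either be added and shown impure --- presumably by the method of Lemma \ref{l:spin18} --- or excluded by a correct dimension count, which the one you give is not.

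A smaller point: for the orthogonal groups $\dim\Gamma_{t\omega_1}$ is not $\binom{2n+t-1}{t}$; that is the dimension of $\Sym^t\CC^{2n}$, which is reducible, and the irreducible $\Gamma_{t\omega_1}$ is its harmonic part, of dimension $\binom{2n+t-1}{t}-\binom{2n+t-3}{t-2}$. The conclusion you draw (that $t\omega_1$ is small enough exactly for $t\le 3$) survives this correction, but the formula as written overestimates the dimension, and an overestimate cannot legitimately be used to rule a representation out. The remainder of the proposal is a sketch at the same level of detail as the paper's $\SL_n$ argument and raises no further concerns.
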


\subsection{The exceptional groups}
\begin{proposition}
Theorem \ref{thm:simple} holds for all exceptional groups.
\end{proposition}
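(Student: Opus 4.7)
The plan is to mirror the proofs of the classical cases treated in this section. For each exceptional group $G \in \{G_2, F_4, E_6, E_7, E_8\}$, Proposition \ref{prop:bounddimV} provides an explicit bound on $\dim V$ for any irreducible pure representation $V$; since $\rk(G)$ and $\dim(G)$ are fixed, this yields a \emph{finite} list of ``small enough'' candidates, in contrast to the classical setting where infinite families arise. I would enumerate these using the algorithm described after Lemma \ref{l:shiftdim}: first compute $\dim \Gamma_{t\omega_s}$ in closed form for each fundamental weight $\omega_s$ and classify the $1$-supported small enough weights, then iteratively apply Lemma \ref{l:shiftdim} to classify $k$-supported small enough weights until the list is empty.

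For each representation in the resulting table I would determine whether it is cofree, impure, or unstable. Since every exceptional group is semisimple with no nontrivial characters, Lemma \ref{l:allpure} collapses pure, npure, and cnpure into a single condition. The cofree entries appear on the Kac--Popov--Vinberg list \cite{KPV:76} and are automatically pure by Theorem \ref{part:general-facts}.\ref{thm:polar}. For each remaining entry I would invoke Lemma \ref{l:toral}: using LiE \cite{LiE:92}, compute $\dim V^t$ for every Weyl group orbit representative $t$ of a toral element of small prescribed order (orders $2$ and $3$ will generally suffice), and verify the bound $\dim V^t \le \dim V - \dim G - 2$. Any representation for which this test fails will be handled by hand: either by directly exhibiting a 1-parameter subgroup whose zero-weight space is too large via Proposition \ref{prop:hyperplanes}, or by checking against a toral element of higher order.

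The main obstacle is computational rather than conceptual: the dimension bound of Proposition \ref{prop:bounddimV} for $E_8$ is roughly $2000$, so in principle one must enumerate all irreducible $E_8$-representations of dimension at most this value. In practice, however, the rapid growth of $E_8$-representation dimensions (the smallest nontrivial representation already has dimension $248$) keeps the list of small enough representations short, and the same is true to a lesser extent for $E_7$ and $E_6$, while $G_2$ and $F_4$ each involve only a handful of cases. A secondary obstacle is that, unlike the classical groups, a few exceptional representations of intermediate dimension (for example, the $56$-dimensional representation of $E_7$ or the $27$-dimensional representation of $E_6$) are known to be cofree for subtle reasons, so one must cross-reference carefully against \cite{KPV:76} rather than relying on ad hoc arguments. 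Once the candidate lists are tabulated and each entry is classified, the proof reduces to the observation that every small enough irreducible representation is cofree, impure, or unstable; by Remark \ref{r:smallenough} this establishes Conjecture \ref{part:general-facts}.\ref{conj:nicely-pure} for all exceptional simple Lie groups, and then for all isogenous forms by Corollary \ref{c:simplyconnected}.
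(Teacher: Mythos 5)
Your proposal follows essentially the same route as the paper: bound the dimension via Proposition \ref{prop:bounddimV}, enumerate the small enough highest weights by computer, identify the cofree entries from the Kac--Popov--Vinberg list, and rule out the rest via Lemma \ref{l:toral} with LiE. The paper's computer check shows the resulting list is in fact tiny --- only the adjoint, the smallest nontrivial representation, and $2\omega_1$ of $G_2$ survive the bound, with the last eliminated by order-$3$ toral elements exactly as you describe.
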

\begin{proof}
A computer check reveals that any highest weight for an exceptional group either has dimension greater than the bound of Proposition \ref{prop:bounddimV}, is the adjoint representation, the smallest nontrivial representation, or is the representation $2\omega_1$ of $G_2$.%\matt{we should mention smallest nontrivial is cofree}
The adjoints and smallest non-trivial representations are cofree for all the exceptional groups.
If $V$ is the $G_2$-representation $2\omega_1$, one checks that every order 3 toral element $t$ has $\dim V^t = 9 \le \dim V - \dim G_2 - 2 = 27 - 14 - 2 = 11$, and so $2\omega_1$ is also not pure by Lemma \ref{l:toral}. 
Thus pure implies cofree for irreducible representations of exceptional groups.
\end{proof}

\section{Remaining cases}
\label{sec:notpure}

%\matt{Question: all of the results in the section are lemmas but some involve more work. Should some of these be propositions?}

In this section we complete the proof of Theorem \ref{thm:simple} by demonstrating that the representations in Tables 1--4 that were not cofree are not pure.
By Lemma \ref{l:toral}, it suffices to check that $\dim V^t \sslash N_t$ is at most $\dim V - \dim G - 2$ for all toral elements of any fixed order, up to the action of the Weyl group, which we will do for every representation but the spinor representation of $\Spin_{15}$.

\subsection{The case of $\SL_n$}

%In this subsection, let $t_k$ be the toral element $\diag(-1, \ldots, -1, 1, \ldots, 1)$ with $k$ entries being $-1$.
%The elements $t_2, t_4, \cdots, t_{2\lfloor n/2\rfloor}$ are a complete set of order 2 toral elements up to the action of the Weyl group.
\begin{lemma}
\label{l:sl-omega3}
The representation $\Gamma_{\omega_3} = \extp^3 \SL_n$ is not pure for $n \ge 10$.
\end{lemma}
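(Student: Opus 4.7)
The plan is to apply Lemma \ref{l:toral} with order-$2$ toral elements of $\SL_n$. Up to Weyl action, such an element is conjugate to $t_k = \mathrm{diag}(I_{n-2k}, -I_{2k})$ for integers $1 \le k \le \lfloor n/2 \rfloor$. Its fixed locus is
\[
V^{t_k} = \extp^3 \CC^{n-2k} \oplus \bigl(\CC^{n-2k} \otimes \extp^2 \CC^{2k}\bigr),
\]
acted on by the centralizer $Z_{t_k}^\circ = S(\GL_{n-2k} \times \GL_{2k})$. I aim to verify
\[
\dim V^{t_k} \sslash N_{t_k} \le \binom{n}{3} - n^2 - 1
\]
for each $k$, which by Lemma \ref{l:toral} yields the result.

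The critical case is $k = 1$, where $\dim V^{t_1} = \binom{n-2}{3} + (n-2)$ is largest and exceeds the target on its own. Here I would carry out an orbit--stabilizer analysis. Fixing a generic $v$ in the $\CC^{n-2}$ summand, its stabilizer in $\GL_{n-2}$ is the mirabolic $\tilde P_v \cong \SL_{n-3} \ltimes \AA^{n-3}$ of dimension $(n-2)(n-3)$. The unipotent radical $\AA^{n-3}$ acts on $\extp^3 \CC^{n-2}$ by contraction against $v$, which on a generic $\xi$ is injective, so the unipotent stabilizer is trivial. Combined with the $\SL_2 \subset Z_{t_1}^\circ$ (which acts trivially on $V^{t_1}$), a direct dimension count gives
\[
\dim V^{t_1} \sslash Z_{t_1}^\circ \;=\; \frac{(n-2)(n-3)(n-10)}{6} - 3 + d,
\]
where $d$ is the dimension of the generic stabilizer of $\SL_{n-3}$ on $\extp^3 \CC^{n-3} \oplus \extp^2 \CC^{n-3}$. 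A short calculation shows
\[
\binom{n}{3} - n^2 - 1 - \frac{(n-2)(n-3)(n-10)}{6} \;=\; n^2 - 9n + 9,
\]
so the $k=1$ case succeeds provided $d \le n^2 - 9n + 12$ for all $n \ge 10$. For example, when $n = 10$ one uses that $\SL_7$ acts with trivial generic stabilizer on $\extp^3 \CC^7 \oplus \extp^2 \CC^7$: the $\SL_7$-stabilizer of a generic $\xi \in \extp^3 \CC^7$ is $G_2$, which acts freely on a generic $\eta \in \extp^2 \CC^7 = \fg_2 \oplus \CC^7$ because a generic pair (adjoint element, $7$-dimensional vector) has trivial stabilizer under $G_2$.

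The remaining cases $k \ge 2$ have $\dim V^{t_k}$ substantially below $\binom{n}{3} - n^2 - 1$, so the trivial bound $\dim V^{t_k} \sslash N_{t_k} \le \dim V^{t_k}$ suffices; this can be verified by elementary estimates on $\binom{n-2k}{3} + (n-2k)\binom{2k}{2}$. The hard part is controlling $d$ uniformly in $n$ in the $k=1$ case, since the relevant generic-stabilizer calculation depends on $n$. For $n-3 \le 9$ the stabilizer is extracted from the cofree classification data (Table \ref{t:sl}), and for $n - 3 \ge 10$ one proceeds by induction on $n$, using that the required structural data (dimensions of orbits) is weaker than full cofreeness.
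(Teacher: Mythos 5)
Your overall strategy---order-$2$ toral elements together with Lemma \ref{l:toral}, and the decomposition $V^{t}=\extp^3\CC^{n-2k}\oplus(\CC^{n-2k}\otimes\extp^2\CC^{2k})$---is the same as the paper's. However, there is a genuine gap: the claim that for $k\ge 2$ the trivial bound $\dim V^{t_k}\sslash N_{t_k}\le\dim V^{t_k}$ suffices is simply false in the range $10\le n\le 15$. The target is $\binom{n}{3}-n^2-1$, which equals $19$ for $n=10$, whereas $\dim V^{t_2}=\binom{6}{3}+6\binom{4}{2}=56$ and $\dim V^{t_3}=\binom{4}{3}+4\binom{6}{2}=64$; even at $n=15$ one has $\dim V^{t_2}=\binom{11}{3}+66=231$ against a target of $229$. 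So a number of the cases you dismiss require a real generic-stabilizer analysis. Worse, you have inverted the difficulty of the problem: your ``critical case'' $k=1$ is in fact the easy one (for $n\ge 12$ the $\SL_{n-2}$-module $\extp^3\CC^{n-2}$ is properly stable, so its generic stabilizer is finite and a one-line dimension count closes that case), while the hardest point of the entire lemma lies among the cases you wave away. Concretely, $t=\diag(I_6,-I_4)$ in $\SL_{10}$ (your $k=2$, $n=10$) produces the $\SL_6\times\SL_4$-module $\extp^3\CC^6\oplus(\CC^6\otimes\extp^2\CC^4)$, which does not appear in the standard tables of generic stabilizers; one needs a bespoke argument (e.g.\ viewing $\CC^6\otimes\extp^2\CC^4$ as $(\CC^6)^{\oplus 6}$ for $\GL_6$, which acts generically freely, and then showing the residual $\SL_4$-action on the $20$-dimensional intermediate quotient has positive-dimensional generic orbits) to push the quotient dimension down to $19$. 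Your proposal contains no substitute for this step, and without it the lemma is not proved even for $n=10$.

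There are also smaller problems in the $k=1$ analysis, though they are fixable. The displayed formula for $\dim V^{t_1}\sslash Z_{t_1}^{\circ}$ should be an inequality rather than an equality (at $n=10$ it evaluates to $-3+d$, which cannot be the dimension of a nonempty quotient), and the stabilizer of a generic $v$ in the $\CC^{n-2}$-summand under $S(\GL_{n-2}\times\GL_2)$ is $(\GG_m\times\SL_{n-3})\ltimes\AA^{n-3}$ rather than $\SL_{n-3}\ltimes\AA^{n-3}$ (the dimension $(n-2)(n-3)$ you quote is nevertheless correct). The uniform control of $d$ is also unproblematic, since $\extp^3\CC^{m}\oplus\extp^2\CC^{m}$ is properly stable for $\SL_m$ once $m$ is moderately large, so $d=0$ there. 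None of this rescues the missing cases above.
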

\begin{proof}
Let $t$ be a toral element of order $2$ and let $k = \dim \SL_n^t$; note that $n-k$ is always even.
%From Lemma \ref{l:decomp}, we have
%\[ \left(\extp^3 \SL_n\right)^{t_k} = \extp^3 \SL_n^t \oplus \SL_n^t \otimes \extp^2 \SL_n^{(t,-)} \]
%and therefore that
%\[ \dim \left(\extp^3 \SL_n\right)^{t_k} = \dim \extp^3 \SL_n^t \oplus (\dim \SL_n^t)(\dim \extp^2 \SL_n^{(t,-)}) = \binom{k}{3} + k\binom{n-k}{2} \]
%In comparison, $\dim \extp^3 \SL_n - \dim \SL_n - 2 = \binom{n}{3} - n^2 - 2$.
%When $n \ge 16$, we then have
%\[ \dim \left(\extp^3 \SL_n\right)^{t_k} \le \dim \extp^3 \SL_n - \dim \SL_n - 2 \]
%for all cases but when $k=n-2$.
%This case is handled below, and so it follows that $\extp^3 \SL_n$ is not pure for $n \ge 16$.
%
%\matt{we need a lot more details here. Also, I redid this computation and I'm finding that $\dim V^t\leq\dim V-\dim G -2$ except for $k=n-2$, which is handled below.}

%\matt{Notation conflicts with notation at beginning of this subsection, i.e.~$k$ and $n-k$ are switched. My comments for the rest of this proof are with $k$ being the number of 1s}
As a representation of $G':=\SL_k\times\SL_{n-k}$, we have,
\[
\left(\extp^3\SL_n\right)^t=\extp^3\SL_k\, \oplus\ \left(\SL_k\otimes\extp^2\SL_{n-k}\right).
\]
Let $H$ be the stabilizer of a generic point for the $G'$-action on $(\extp^3\SL_n)^t$.
Since $G'$ normalizes $t$, from Lemma \ref{l:toral} it follows that if
\begin{equation}\label{eqn:toral-lemma}
\dim (\extp^3\SL_n)^t \sslash G'\ \le\ \dim \extp^3 \SL_n - \dim \SL_n - 2\ =\ \binom{n}{3} - (n^2-1) - 2,
\end{equation}
then $\extp^3\SL_n$ is not a pure representation of $\SL_n$.
%\spencer{From here, the argument is something like $G'$ is contained in the normalizer of $t_k$, so it suffices to show that $\dim V^{t_k} \sslash G' \le \dim V - \dim G - 2$ right?}

Note that $H\subseteq H'\times\SL_{n-k}$, where $H'$ is the stabilizer of a generic point for the action of $\SL_k$ on $\extp^3\SL_k$. For $k\geq10$, this representation is properly stable, so $\dim H' = 0$; for $k\leq9$, $H'$ is listed in the generic stabilizer table of \cite[p.~261]{PoVi:94}.
So%\matt{note that if $k=0$, then $\dim\SL_k\neq k^2-1$, so the upper bound can be lowered, i.e.~we're using a worse upper bound than needed. I'm fine keeping the proof as is though since this doesn't affect anything}
\begin{align*}
\dim\left(\left(\extp^3\SL_n\right)^t\sslash G'\right) &\leq \dim\left(\extp^3\SL_n\right)^t-(k^2-1)+\dim H'\\
&=\binom{k}{3} + k\binom{n-k}{2} - (k^2-1) + \dim H'.
\end{align*}
%which handles all cases where $k\geq7$.\matt{another sentence or so here would be helpful -- I didn't double check this computation}
Comparing with the previous equation, it is enough to show
\begin{equation}\label{eqn:wedge3-ineq}
\dim H' \le \binom{n}{3} - (n^2-1) - 2 - \binom{k}{3} - k\binom{n-k}{2} + (k^2-1).
\end{equation}
The table of \cite[p.~261]{PoVi:94} reveals that $\dim H' \le 16$ always, and as a result we can compute that if $n \ge 15$, then (\ref{eqn:wedge3-ineq}) holds for any $k$. When $10\leq n\leq 14$, one checks that (\ref{eqn:wedge3-ineq}) holds for all $(n,k)$ except for
\[
(10, 2), (10, 4), (10, 6), (11,1), (11,3), (11,5), (12,2), (12,4), (12,6), (13,3), (13,5), (14, 4), (14,6);
\]
for example, if $k=7$, then \cite[p.~261]{PoVi:94} shows that $\dim H'=14$ and (\ref{eqn:wedge3-ineq}) holds for $n\in\{11,13\}$.

%\matt{Using Elashvili's classification we find that of these cases, only {\bf TODO??} have positive dimensional  isotropy.}

It remains to handle the $(n,k)$ pairs listed above. We begin with the case where $k=1$. Then $(\extp^3\SL_n)^t \sslash G' =\extp^2\SL_{n-1} \sslash\SL_{n-1}$ which is $1$-dimensional, hence (\ref{eqn:toral-lemma}) holds.

When $k=2$, our $G'$-representation $(\extp^3\SL_n)^t$ is $\SL_2\otimes\extp^2\SL_{n-2}$. From \cite[Table 6]{Elash:72}, we see the dimension of the generic stabilizer is $\frac{3}{2}(n-2)$, so
\[
\dim\left(\left(\extp^3\SL_n\right)^t\sslash G'\right)\leq 3{n-2\choose 2}-\dim G'+\frac{3}{2}(n-2)\leq \binom{n}{3} - (n^2-1) - 2,
\]
again showing (\ref{eqn:toral-lemma}) holds.

With the exception of $(n,k)=(10,6)$, for all remaining $(n,k)$, the $G'$-representation $\SL_k\otimes\extp^2\SL_{n-k}$ falls within Case 6 of \cite{Elash:72}, but does not appear on Table 6 of (loc.~cit.), so $\dim H=0$.%\matt{double check we've used his result correctly.}
It follows that 
\[
\dim\left(\left(\extp^3\SL_n\right)^t\sslash G'\right) \leq \binom{k}{3}+k\binom{n-k}{2}-\dim G'\leq \binom{n}{3} - (n^2-1) - 2,
\]
and so (\ref{eqn:toral-lemma}) holds.

%\matt{I haven't checked the rest of this proof -- also need to make the rest of the proof have consistent notation with the first part of the proof}

We now turn to the last case: $(n,k)=(10,6)$. We must show that the quotient of $V := (\SL_6 \otimes \extp^2 \SL_4) \oplus \extp^3 \SL_6$ by the normalizer $N\subseteq\SL_{10}$ of $t=\diag(1,1,1,1,1,1,-1,-1,-1,-1)$ has dimension at most $\dim\extp^3\SL_{10}-\dim\SL_{10}-2=19$.

First, note that $N$ contains the image of the map $\GL_6 \times \SL_4 \to \SL_{10}$ given by $(g,h) \mapsto \diag(g, \det{g}^{-1}h)$.
The kernel of this map is finite, so $\dim V\sslash N \le \dim V\sslash (\GL_6 \times \SL_4)$.
We will show that the latter quotient has dimension at most 19.

To simplify notation, let $G = \GL_6$ and $H = \SL_4$. Note that $(V \oplus W)/(G \times H) = ((V \oplus W)/G)/H$.
As a $G= \GL_6$-module, $V = \SL_6^{\oplus 6}$, and $G$ acts generically freely on $V$; in particular, the quotient is 0-dimensional.
Thus $\dim (V +W) /G = \dim V/G + \dim W = 20$.
To prove the assertion, we need to show that $H$ acts non-trivially on this quotient and that the generic orbit is closed.
The action of $h \in H$ on a $G$-orbit $\overline{(v,w)}$ is given by $h\overline{(v,w)} = \overline{(hv, hw)} = \overline{(hv, w)}$, where the last equality is because $H = \SL_4$ acts trivially on $W = \extp^3 \SL_6$.

If $v \in V$ is a generic vector in $V$ then $\stab_G v = 1$.
In addition $H$ acts trivially on $V/G$ (as it is 0-dimensional), so if $h \in H$, there is a unique $g \in V$ such that $hv = gv$.
This defines a morphism $\varphi_v \colon H \to G$, and we can rewrite the action of $H$ on $(V + W)/G$ as $h\overline{(v,w)} = \overline{(hv, w)} = \overline{(v, \varphi(h)^{-1}w)}$.

The map $\varphi_v$ must necessarily have finite kernel, because $H= \SL_4$ is a simple group and the image is non-trivial---$H$ acts non-trivially on $V$, so we can pick $v$ such that $\stab_v H \neq H$.
Thus $h$ stabilizes $\overline{(v,w)}$ if and only $\varphi_v(h) \subset \stab_Gw$.
Since $w$ is independent of $v$, we claim that there must exist $w \in W$ such that the image of $\varphi_v(H)$ is not contained in $\stab_G w$.
This would mean that it is a strictly smaller dimensional subgroup of $\varphi_v(H)$, since a proper subgroup of a connected group always has strictly smaller dimension.

To prove the claim, suppose to the contrary that $\varphi_v(H) \subset \stab_G w$ for all $w$ in a dense open set.
Then $\varphi_v(H)$ is a subgroup of the kernel of the $G$ action on $\extp^3 \SL_6$. But the kernel of this action is trivial. 
\end{proof}

\begin{lemma}
\label{l:sln-extra}
For $n\geq4$, the representations $\Gamma_{\omega_1 + \omega_2}$, $\Gamma_{2\omega_1 + \omega_{n-1}}$, $\Gamma_{\omega_1 + \omega_{n-2}}$, and $\Gamma_{3\omega_1}$ are not pure.
\end{lemma}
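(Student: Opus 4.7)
My plan is to apply Lemma \ref{l:toral} with order-$2$ toral elements for each of the four infinite families, following the template of Lemma \ref{l:sl-omega3}. Taking as Weyl representatives $t_k = \diag(I_k, -I_{n-k})$ for $0 \le k \le \lfloor n/2 \rfloor$, the normalizer $N_{t_k}$ in $\SL_n$ contains $S(\GL_k \times \GL_{n-k})$ of dimension $k^2 + (n-k)^2 - 1$. To conclude that $V$ is not pure it suffices to show
\[
\dim V^{t_k} \sslash N_{t_k} \;\le\; \dim V - (n^2 - 1) - 2
\]
for every such $k$, since this gives $\codim V^{\sss} \ge 2$ by Lemma \ref{l:toral}.

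Next I would realize each of the four representations as a summand in a standard tensor construction: $\Sym^3 V = \Gamma_{3\omega_1}$ itself; $V \otimes \extp^2 V = \Gamma_{\omega_1+\omega_2} \oplus \extp^3 V$; $V \otimes \extp^{n-2} V = \Gamma_{\omega_1+\omega_{n-2}} \oplus \extp^{n-1} V$; and $\Sym^2 V \otimes V^* = \Gamma_{2\omega_1+\omega_{n-1}} \oplus V$ via the contraction pairing. Writing $V = V_+ \oplus V_-$ for the $t_k$-eigenspace decomposition with $\dim V_+ = k$, the $t_k$-invariant part of each ambient construction consists of exactly those summands involving an even number of $V_-$ tensor factors; subtracting the $t_k$-invariants of the complementary constituent yields a closed-form polynomial in $n$ and $k$ for $\dim V^{t_k}$. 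I would then apply the bound
\[
\dim V^{t_k} \sslash N_{t_k} \;\le\; \dim V^{t_k} - \bigl(k^2 + (n-k)^2 - 1\bigr) + \dim H,
\]
where $H$ denotes the generic stabilizer of the $S(\GL_k \times \GL_{n-k})$-action on $V^{t_k}$. For the vast majority of $(n,k)$ the action is properly stable so $\dim H = 0$, and the verification reduces to a routine polynomial comparison in $n$ and $k$.

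The main obstacle will be boundary cases at small $n$ or extremal $k$ where $H$ is positive-dimensional and the resulting bound is close to tight (note for instance that at $n=4$, $k=2$ the crude estimate is already an equality). For such cases I would consult the generic stabilizer tables of \cite[Table 6]{Elash:72} and \cite[p.~261]{PoVi:94} to read off $\dim H$ and verify the inequality directly, and if this still fails I would refine by incorporating the full normalizer $N_{t_k}$ (which acquires a block-swap involution when $k = n-k$) along the lines of the $(n,k) = (10,6)$ analysis in Lemma \ref{l:sl-omega3}. As a backup when the order-$2$ analysis is too coarse, I would switch to order-$3$ toral elements of the form $\diag(\omega I_a, \omega^2 I_b, I_c)$, whose fixed loci are typically strictly smaller and thus yield additional slack in the required inequality.
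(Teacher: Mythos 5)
Your overall strategy --- realizing each $\Gamma$ as a constituent of a standard tensor construction, computing $\dim V^{t}$ for order-$2$ toral elements via the $\pm1$-eigenspace decomposition, and invoking Lemma \ref{l:toral} --- is exactly what the paper does for large $n$, and there the argument is even easier than you suggest: for $n\geq 7$ (resp.\ $n>6$) the raw inequality $\dim V^{t}\le\dim V-\dim\SL_n-2$ already holds, with no need to quotient by the normalizer or to discuss generic stabilizers. One caveat on your setup: the representatives $t_k=\diag(I_k,-I_{n-k})$ with $0\le k\le\lfloor n/2\rfloor$ are not a complete set of order-$2$ toral classes. You need $n-k$ even for $t_k\in\SL_n$, and the classes with $k>n/2$ are not Weyl-conjugate to those with $k\le n/2$; since $-I$ acts by $-1$ on three of the four representations in question, $V^{t}$ and $V^{-t}$ genuinely differ, so these extra classes cannot be dropped (e.g.\ for $\Gamma_{\omega_1+\omega_2}$ with $n=6$ the class $k=4$ is the one for which the crude bound fails and the normalizer is actually needed).

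The substantive gap is in the small-$n$ cases, which is where essentially all of the work in the paper's proof lives. First, the generic stabilizers you need are those of $S(\GL_k\times\GL_{n-k})$ acting on \emph{reducible} fixed-point representations such as $\Sym^3 V_+\oplus\bigl(V_+\otimes\Sym^2 V_-\bigr)$; these do not appear in \cite[Table 6]{Elash:72} or \cite[p.~261]{PoVi:94}, which list stabilizers for irreducible representations of (semi)simple groups, so ``read off $\dim H$ from the tables'' does not go through and each stabilizer must be computed by hand. Second, at $n=4$ your inequality is an exact equality: for $\Sym^3\CC^4$ and $t=\diag(1,1,-1,-1)$ one gets $\dim V^t-\dim N_t+\dim H=10-7+\dim H$, which must be $\le 3$, forcing $\dim H=0$ with zero slack; the entire case therefore rests on an unverified stabilizer computation. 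The paper avoids this: for $\Gamma_{\omega_1+\omega_2}$ at $n=4$ it uses order-$4$ toral elements with an explicit $\SL_2\times\SL_2$ branching, at $n=5$ it counts weights on hyperplanes via Proposition \ref{prop:hyperplanes}, and --- most notably --- for $\Gamma_{3\omega_1}=\Sym^3\CC^n$ with $4\le n\le6$ it abandons toral elements entirely and argues via GIT of cubic hypersurfaces: $V^{\sss}$ lies in the irreducible discriminant divisor, and since the generic singular cubic surface, threefold, or fourfold is GIT-stable \cite{ACT:02,All:03,Laz:09}, the containment is strict and $V^{\sss}$ has codimension at least $2$. Your plan is salvageable in outline, but the small-$n$ verifications are the real content of the lemma and remain to be carried out.
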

\begin{proof}
We first consider the case of $V:=\Gamma_{\omega_1 + \omega_2}$. %When $n \ge 7$, we will show $\dim V^t \le \dim V - \dim \SL_n - 2$ for a set of Weyl group order 2 toral element representatives $t$, and hence $V$ is not pure by Lemma \ref{l:toral}. To do this, we use 
From the decomposition
\[ V\oplus \extp^3 \SL_n \simeq \SL_n \otimes \extp^2 \SL_n \]
we see that if $t=\diag(1,\dots,1,-1,\dots,-1)$ is an order 2 toral element with $k$ ones, then
\[
\dim V^t = \dim \left(\SL_n\otimes\extp^2 \SL_n\right)^t - \dim \left(\extp^3 \SL_n\right)^t
=k(n-k)^2+k\binom{k}{2}-\binom{k}{3}.
\]
Since $t\neq1$, we have $0\leq k\leq n-2$. With these constraints, one checks that the above cubic is maximized at $k=n-2$, so for $n\geq7$, we have
\[
\dim V^t \leq n\binom{n}{2}-\binom{n}{3} - (n^2-1)-2= \dim V - \dim \SL_n - 2
\]
showing that $V$ is not pure by Lemma \ref{l:toral}.

The cases $4\leq n\leq 6$ are handled as follows. 
When $n = 6$, one may use LiE to directly verify that $\dim V^t \le \dim V - \dim \SL_6 - 2$ for all $t$ in a set of toral representatives of order 3, and so $V$ is not pure by Lemma \ref{l:toral}.
When $n = 5$, one check by computer that the maximum number of weights lying on a hyperplane in the weight space for $V$ is $14<15=\dim V-\dim \SL_5 - 1$, so by Proposition \ref{prop:hyperplanes} it is not pure.%\matt{I added that \# weights is 14, double check}

For $n = 4$, we check using toral elements of order four that we always have $\dim V^t \sslash N_t \le 3$, branching the representation as an $\SL_2 \times \SL_2$ module as necessary.
We are grateful for the referee for suggesting a simpler proof of this case.
There are five toral elements of order four up to Weyl group conjugacy, of which two have no fixed points, and the remaining three are the elements $\diag(i, -i, -1, -1), \diag(i, i, -1, 1), \diag(i, -i, 1, 1)$ where $i$ is a primitive fourth root of unity.
Take for example $t = \diag(i, -i, 1, 1)$ which is normalized by $\{1\} \times \SL_2$.
As a representation of $\SL_2 \times \SL_2$, we have $V = \CC^2 \otimes \SL_2 \oplus \SL_2 \otimes \CC^2 \oplus \SL_2 \otimes \Sym^2 \SL_2 \oplus \Sym^2 \SL_2 \otimes \SL_2$, and so $V^t = \CC^2 \otimes \SL_2 \oplus (\Sym^2 \SL_2)^0 \otimes \SL_2$, where $(\Sym^2 \SL_2)^0$ denotes the $0$ weight space of $\Sym^2 \SL_2$.
From here we can see that $\dim V^t \sslash N_t \le \dim V^t / \{1\} \times \SL_2 \le 3$.
A similar analysis can be performed for the other two toral elements, showing that $V$ is not pure for $n=4$.

The cases $\Gamma_{2\omega_1 +\omega_{n-1}}$ and $\Gamma_{\omega_1+\omega_{n-2}}$ follow similarly using the decompositions $\Gamma_{2\omega_1 +\omega_{n-1}} \oplus \SL_n \simeq \SL_n\otimes\Sym^2 \SL_n$ and $\SL_n \otimes \extp^{n-2}\SL_n \simeq \SL_n^* \oplus \Gamma_{\omega_1+\omega_{n-2}}$. In both cases, $\dim V - \dim \SL_n - 2-\dim V^t$ are again cubics in $\dim \SL_n^t$.
In the former case, this cubic is always non-negative for $n > 6$, meaning that the representation is not pure for $n \ge 6$ by Lemma \ref{l:toral}.
In the latter case, this cubic is always non-negative for $n>6$ and $1\leq \dim \SL_n^t \leq n$, and in the last case where $\dim \SL_n^t = 0$, then $t = -I$.
The remaining for both representations where $n=4,5,6$ are handled with LiE using toral elements of order 3 and using Lemma \ref{l:toral}.

Lastly, for $V:=\Gamma_{3\omega_1} \simeq \Sym^3 \SL_n$, we have $V^t=\Sym^3\SL_k\oplus(\SL_k\otimes\Sym^2\SL_{n-k})$.
When $n \ge 7$, then one can check $\dim V^t \le \dim V - \dim \SL_n - 2$ for all $k$ except $k = n$ when $n$ is even; but then $t = -I$.
For $4 \le n \le 6$ we adopt a different approach.
It is proved in \cite{MFK:94} that every smooth cubic hypersurface of degree 3 is GIT stable.
This implies that if $V = \Sym^3 \SL_n$ then  $V^{\sss}$ is contained in the discriminant divisor.
For $n=4,5,6$ (cubic surfaces, threefolds and fourfolds) work in GIT \cite{ACT:02, All:03, Laz:09} implies that the generic singular hypersurface is GIT stable. 
Since the discriminant divisor is irreducible this implies that $V^{\sss}$ cannot have codimension one since it is a proper algebraic subset of the discriminant.
%For $k=0,1$, we see $\dim V^t\leq\dim V-\dim\SL_n-2$. For $k\geq2$, $V^t$ has 0-dimensional generic stabilizer when viewed as a representation of $\SL_k\times\SL_{n-k}\subset N_t$, so $\dim V^t\sslash N_t\leq\dim V^t-\dim(\SL_k\times\SL_{n-k})$ which is a cubic in $k$ maximized at $k=n-2$. From this, we find $\dim V^t\sslash N_t\leq\dim V-\dim\SL_n-2$ for $n\geq5$.
\end{proof}

\subsection{The case of $\Sp_{2n}$}

\begin{lemma}
\label{l:sp2n-extra}
The $\Sp_{2n}$-representations $\Gamma_{\omega_3}$ and $\Gamma_{3\omega_1}$ for $n \ge 5$ are not pure.
\end{lemma}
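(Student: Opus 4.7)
My plan is to apply Lemma \ref{l:toral}, using toral elements of order two for $\Gamma_{3\omega_1}$ and of order three for $\Gamma_{\omega_3}$.

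For $V := \Gamma_{3\omega_1} = \Sym^3 \Sp_{2n}$: an order-two toral element $t$ is parametrized up to the Weyl group by the dimension $2k$ of its $(+1)$-eigenspace $V^+$ on the standard representation, with $V^-$ the $(-1)$-eigenspace; only $k \in \{1, \ldots, n-1\}$ requires verification, since $k=0$ gives $t = -I$ (with $V^t = 0$) and $k = n$ gives the excluded case $t = I$. Taking invariants yields $V^t = \Sym^3 V^+ \oplus V^+ \otimes \Sym^2 V^-$, of dimension
\[
P(k) := \binom{2k+2}{3} + 2k\binom{2(n-k)+1}{2}.
\]
A short calculation shows $P'(k) = (4k-2n)^2 + 2n + 2/3 > 0$, so $P$ is strictly increasing and it suffices to verify the bound at $k = n-1$. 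Substituting gives $\dim V - \dim \Sp_{2n} - 2 - P(n-1) = 2n^2 - 7n + 4 > 0$ for all $n \ge 5$, and Lemma \ref{l:toral} yields impurity.

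For $V := \Gamma_{\omega_3}$, I would instead use order-three toral elements. Up to the Weyl group these are parametrized by an integer $a \in \{1, \ldots, n\}$ (the eigenvalues of $t$ being a primitive third root of unity $\zeta$, $1$, and $\zeta^{-1}$ on subspaces of dimensions $a, 2(n-a), a$), with centralizer $G_t = \GL_a \times \Sp_{2(n-a)}$. Combining the symplectic short exact sequence $0 \to \Sp_{2n} \to \extp^3 \Sp_{2n} \to \Gamma_{\omega_3} \to 0$ with the weight-space decomposition of $\extp^3 \Sp_{2n}$ gives
\[
\dim V^t = \binom{2(n-a)}{3} + 2\binom{a}{3} + 2a^2(n-a) - 2(n-a).
\]
The tightest case is $a = 1$, for which $\dim V^t \le \dim V - \dim \Sp_{2n} - 2$ reduces to $2n^2 - 11n + 2 \ge 0$; this holds for all $n \ge 6$, and for $n = 5$ with $a \ge 2$ the same direct bound is easily checked, so in these cases Lemma \ref{l:toral} applies directly.

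The single remaining subcase $(n, a) = (5, 1)$ is the main obstacle, and for it I would invoke the $\dim V^t \sslash N_t$ form of Lemma \ref{l:toral}. Explicit tracking of the embedding $V^t = V^1 \hookrightarrow (\extp^3 V)^t$ induced by wedging with the symplectic form identifies $\Gamma_{\omega_3}^t$ with the $\Sp_8$-representation $\Gamma_{\omega_3}^{\Sp_8} \oplus \Sp_8$ of dimension $56$, and the $\GL_1$ factor of $G_t = \GL_1 \times \Sp_8$ acts trivially (its weight on each non-vanishing summand of $(\extp^3 V)^t$ is zero). Since $\Sp_8$ acts transitively on nonzero vectors of its standard representation, the generic $\Sp_8$-orbit in $\Gamma_{\omega_3}^t$ has dimension at least $8$, so
\[
\dim \Gamma_{\omega_3}^t \sslash G_t \le 56 - 8 = 48 \le 53 = \dim V - \dim \Sp_{10} - 2,
\]
completing the proof.
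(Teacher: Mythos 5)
Your proof is correct, and its overall architecture is the same as the paper's: apply Lemma \ref{l:toral} to toral elements, compute $\dim V^t$ from the eigenspace decomposition of the standard representation together with exactness of taking invariants, and reduce to a polynomial inequality in the rank. The differences are in the details, and they are genuine. For $\Gamma_{3\omega_1}$ your computation agrees with the paper's (order-two torals, $V^t=\Sym^3V^+\oplus V^+\otimes\Sym^2V^-$). For $\Gamma_{\omega_3}$ the paper uses order-two torals for $n\ge 6$ and only switches to order-three torals at $n=5$, whereas you use order-three torals uniformly; this is cleaner, since your single formula isolates $(n,a)=(5,1)$ as the unique case where the crude bound fails, which is exactly the exceptional element $t=\diag(\zeta,1,1,1,1,\zeta^{-1},1,1,1,1)$ the paper also has to treat separately. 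For that case your identification $\Gamma_{\omega_3}^t\simeq\extp^3\Sp_8\simeq\Gamma_{\omega_3}(\Sp_8)\oplus\Sp_8$ of dimension $56$ is the correct one (the paper's displayed $\Sp_8\oplus\extp^3\Sp_8$ double-counts a copy of the standard representation, though its final bound still holds), and your way of finishing is more elementary: instead of citing \'Elashvili's tables to get a trivial generic stabilizer (yielding the bound $28$), you observe that $\Sp_8$ acts transitively on nonzero vectors of the $\Sp_8$-summand, so generic orbits have dimension at least $8$ and $\dim V^t\sslash N_t\le 56-8=48\le 53$, which suffices. The only step I would ask you to flesh out is the assertion that $a=1$ is the tightest case for $\Gamma_{\omega_3}$: as a function of $a$ the fixed-space dimension is not monotone (it has a small local maximum near $a=2n/3$ of size roughly $\tfrac{4}{9}n^3$), so one should either verify directly that the values there remain below $\dim V-\dim\Sp_{2n}-2\approx\tfrac{4}{3}n^3$, or at least record that check; once that is done the argument is complete.
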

\begin{proof}
The representation $\Gamma_{\omega_3}$ is the kernel of the contraction $\extp^3 \Sp_{2n} \to \Sp_{2n}$ using the bilinear form, of dimension $\binom{2n}{3} - 2n$.
Let $t$ be a toral element of order 2, and let $k = \dim \Sp_{2n}^t$.
Note that since with respect to the standard maximal torus, $t = \diag(x_1, \ldots, x_n, x_1^{-1}, \ldots, x_n^{-1})$ for some $x_i = \pm 1$, it follows that $k$ is even.
By decomposing $\Sp_{2n}$ into its positive and negative eigenspaces under $t$ and expanding the exterior product as a sum of tensors, we find by the exactness of taking invariants that
\[
\dim \Gamma_{\omega_3}^t = \binom{k}{3} + k\binom{2n-k}{2} - k.
\]
For $0 \le k \le 2n-2$, this cubic is maximized at $k = 2n-2$ for $n \ge 6$, and if $k > 2n-2$, then since $k$ must be even, in fact $k = 2n$ and $t = 1$ is not of order 2.
Thus we have an upper bound $\dim \Gamma_{\omega_3}^t \le \binom{2n-2}{3}$ for all $n \ge 6$.
We then check that $\binom{2n-2}{3} \le \dim \Gamma_{\omega_3} - \dim\Sp_{2n} - 2$ for $n \geq 6$, showing that $\Gamma_{\omega_3}$ is not pure for such $n$ by Lemma \ref{l:toral}. 

For $n = 5$, we find that $\dim V^t \le \dim V - \dim\Sp_{2n} - 2$ for toral elements of order 3 except $t = \diag(\zeta,1,1,1,1,\zeta^2,1,1,1,1)$, up to Weyl group conjugacy.
For this $t$, we have
\[ \Gamma_{\omega_3}^t=\left(\ker \extp^3 \Sp_{10} \to \Sp_{10}\right)^t \simeq \ker \left(\left(\extp^3 \Sp_{10}\right)^t \to \Sp_{10}^t\right) \simeq \Sp_8 \oplus \extp^3 \Sp_8 \]
by the exactness of taking invariants.
Since $\Sp_8 \times \Sp_2 \subseteq N_t$, we find
\[ \dim \Gamma_{\omega_3}^t \sslash N_t \le \dim \left(\extp^3 \Sp_8\right) \sslash (\Sp_8 \times \Sp_2) = \dim \left(\extp^3 \Sp_8\right) \sslash \Sp_8. \]
However, since $\Sp_8 \oplus \extp^3 \Sp_8$ is a reducible representation of a simple Lie group that does not appear on the table of \cite{ElashReducible:72}, it follows that $\Sp_8$ acts with trivial generic stabilizer on this representation, and so
\[ \dim \Gamma_{\omega_3}^t \sslash N_t \le \dim \extp^3 \Sp_8 - \dim \Sp_8 = 64 - 36 = 28 \le 53 = \dim V - \dim \Sp_{10} - 2, \]
and the representation is not pure by Lemma \ref{l:toral}.

The proof for $\Gamma_{3\omega_1} = \Sym^3 \Sp_{2n}$ is similar: let $t$ be a toral element of order 2, and let $k = \dim \Sp_{2n}^t$.
Then we compute
\[ \dim \Gamma_{3\omega_1}^t = \binom{k+2}{3} + k\binom{2n-k+1}{2}, \]
and find again that on $0 \le k \le 2n-2$, it is maximized at $k= 2n-2$.
So $\dim \Gamma_{3\omega_1}^t \le \binom{2n}{3}$, and we check that $\binom{2n}{3} \le \dim \Gamma_{3\omega_1} - \dim \Sp_{2n} - 2$ for all $n \ge 5$, so these representations are not pure by Lemma \ref{l:toral}.
\end{proof}

\subsection{The case of $\Spin_{2n+1}$}

\begin{lemma}
\label{l:spin-odd-extra}
The representations $\Gamma_{\omega_3}$ for $n \ge 4$ and $\Gamma_{3\omega_1}$ for $n \ge 2$ of $\Spin_{2n+1}$ are not pure.
\end{lemma}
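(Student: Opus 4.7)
My plan is to adapt the argument of Lemma~\ref{l:sp2n-extra} to the odd spin case. By Lemma~\ref{l:pureextension} together with Corollary~\ref{c:simplyconnected}, I can replace $\Spin_{2n+1}$ by $G = \SO_{2n+1}$ throughout, which is valid since both $\omega_3$ and $3\omega_1$ are integral weights.

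I would then take a toral element $t \in G$ of order $2$. Such a $t$ acts on the standard representation $V_{2n+1}$ with $k$ eigenvalues equal to $+1$ and $2n+1-k$ equal to $-1$, and the condition $\det(t) = 1$ forces $k$ to be odd; excluding $t = I$, this gives $k \in \{1,3,\ldots,2n-1\}$. (The parity of $k$ is the main structural difference from Lemma~\ref{l:sp2n-extra}, where $k$ was even.) Using that $\Gamma_{\omega_3} = \extp^3 V_{2n+1}$ for $n \ge 4$, and that $\Sym^3 V_{2n+1} \cong \Gamma_{3\omega_1} \oplus V_{2n+1}$ under the harmonic decomposition, the splitting $V_{2n+1} = V_{2n+1}^t \oplus V_{2n+1}^{-t}$ together with exactness of $t$-invariants yields
\[
\dim \Gamma_{\omega_3}^t = \binom{k}{3} + k\binom{2n+1-k}{2}, \qquad \dim \Gamma_{3\omega_1}^t = \binom{k+2}{3} + k\binom{2n+2-k}{2} - k.
\]

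The remaining step is to verify that each of these is at most $\dim V - \dim \Spin_{2n+1} - 2$ for every allowable $k$, and then invoke Lemma~\ref{l:toral}. For $\Gamma_{3\omega_1}$, the derivative of the cubic in $k$ has negative discriminant, so the cubic is monotone on $\RR$ and its maximum over the allowed range is $\binom{2n+1}{3} + 2(2n-1)$, attained at $k = 2n-1$; the resulting inequality against $\binom{2n+3}{3} - (n+1)(2n+1) - 2$ simplifies to $n(2n-3) \ge 0$, valid for $n \ge 2$. For $\Gamma_{\omega_3}$ the cubic does have an interior local maximum, so I would check both the odd $k$ nearest to it and the endpoint $k = 2n-1$: for $n \ge 5$ the maximum lies at $k = 2n-1$ and the gap to the bound simplifies to $n(2n-7) \ge 0$, while for $n = 4$ the maximum $\binom{3}{3} + 3\binom{6}{2} = 46$ is attained at $k = 3$ and exactly equals the bound $\binom{9}{3} - \dim \Spin_9 - 2 = 46$. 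The main subtlety will be this borderline case $n = 4$; however, since Lemma~\ref{l:toral} only requires codimension at least $2$, equality suffices and the conclusion that $V$ is not pure follows in all cases.
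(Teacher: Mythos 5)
Your proposal is correct and follows essentially the same route as the paper: order-$2$ toral elements with $k = \dim V_{2n+1}^t$ necessarily odd, the identical formulas for $\dim \Gamma_{\omega_3}^t$ and $\dim \Gamma_{3\omega_1}^t$, and an appeal to Lemma \ref{l:toral}. Your treatment is in fact slightly more careful at the boundary: you correctly record the maximum $\binom{2n+1}{3}+2(2n-1)$ for $\Gamma_{3\omega_1}^t$ (the paper drops the linear term, immaterially) and you notice that at $(n,k)=(4,3)$ one gets exact equality $46 = \dim V - \dim\Spin_9 - 2$, which the paper's phrase ``less than'' glosses over but which, as you say, still suffices for Lemma \ref{l:toral}.
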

\begin{proof}
The representation $\Gamma_{3\omega_1}$ has dimension $\binom{2n+1+2}{3} - (2n+1)$, and is the kernel of the contraction $\Sym^3 \Spin_{2n+1} \to \Spin_{2n+1}$ by the symmetric bilinear form.
For any toral element $t$, exactness of taking invariants gives
\[ \dim \left(\ker \Sym^3 \Spin_{2n+1} \to \Spin_{2n+1}\right)^t = \dim (\Sym^3 \Spin_{2n+1})^t - \dim \Spin_{2n+1}^t,\]
so if $t$ is of order 2 and $k = \dim \Spin_{2n+1}^t$, then
\[ \dim \Gamma_{3 \omega_1}^t = \binom{k+2}{3} + k\binom{2n-k+2}{2} - k\]
If $k = 2n+1$ then $t = 1$ is not of order 2.
Otherwise, note that an order two toral element acting on $\Spin_{2n+1}$ always has odd-dimensional positive eigenspace; as a matrix, it can be taken to be of the form $\diag(x_1, \ldots, x_n, x_1^{-1}, \ldots, x_n^{-1}, 1)$ in the standard maximal torus, where each $x_i = \pm 1$, and so the number of entries with value 1 is always odd.

One may check that as a function of $k$ on the range $[0, 2n-1]$, $\dim \Gamma_{3 \omega_1}^t$ is maximized at $k = 2n-1$.
This gives us an upper bound $\dim \Gamma_{3 \omega_1}^t \le \binom{2n+1}{3}$, and we may check that $\binom{2n+1}{3} \le \dim \Gamma_{3\omega_1} - \dim \Spin_{2n+1} - 2$ always, so by Lemma \ref{l:toral}, $\Gamma_{3\omega_1}$ is not a pure representation of $\Spin_{2n+1}$.

For $\Gamma_{\omega_3}$, we note that it is equal to the third exterior power $\extp^3 \Spin_{2n+1}$ for $n \ge 4$. 
As a result, if $t$ is a toral element of order 2, and $k = \dim \Spin_{2n+1}^t$, then
\[ \dim \Gamma_{\omega_3}^t = \binom{k}{3} + k\binom{2n + 1 - k}{2}, \]
and it can be checked that this is less than $\dim \Gamma_{\omega_3} - \dim \Spin_{2n+1} - 2$ for all $(n,k)$ when $n \ge 4$ and $0 \le k \le 2n+1$ except when $k \ge 2n$.
As we remarked above, since $k$ must be odd, $k \ge 2n$ implies $k = 2n+1$ and then $t = 1$ is not of order 2, and thus by Lemma \ref{l:toral}, $\Gamma_{\omega_3}$ is not pure for any $n \ge 4$.
\end{proof}

%\begin{lemma}
%\label{l:spin-odd-omega3}
%The third exterior power of $\Spin_{2n+1}$ is not pure for any $n \ge 3$.
%\end{lemma}
%\begin{proof}
%This is analogous to Lemma \ref{l:sl-omega3}.
%The only difficult case is $n = 3$ (where the third exterior power has highest weight $2\omega_3$, not $\omega_3$), which is handled in a similar fashion to $\extp^3 \SL_{10}$.
%\end{proof}
%
%\begin{lemma}
%\label{l:spin-odd-omega31}
%The third symmetric power $3\omega_1$ of $\Spin_{2n+1}$ is not pure for any $n \ge 2$.
%\end{lemma}
%\begin{proof}
%This representation has dimension $\binom{2n+1+2}{3} - (2n+1)$, and is the kernel of the contraction $\Sym^3 V \to V$ by the symmetric bilinear form.
%We have
%\[ \dim (\ker \Sym^3 V \to V)^{t_k} \le \dim (\Sym^3 V)^{t_k}\]
%and the rest of the proof is analogous to that of Lemma \ref{l:sp-omega3}.
%%\[ \dim (\ker \Sym^3 V \to V)^{t_k} \le \dim (\Sym^3 V)^{t_k} = \dim \Sym^3 V^+ \oplus V \otimes \Sym^2 V^- = \binom{2n-2k+3}{3} + (2n-2k+1)\binom{2k+1}{2} \]
%%We see that this is minimized when $k = 1$ so that the dimension is $\frac{4}{3} n^3 - 2n^2 + \frac{20}{3}n - 6$, compared to the upper bound $\dim (\Sym^3 V \to V) - \dim G - 2 = \binom{2n+3}{3} - 2n -1 - (2n^2-n) - 2$.
%Then $\dim (\Sym^3 V)^{t_k} \le \dim (\ker \Sym^3 V \to V) - \dim G - 2$ for all $n \ge 3$, and so $\ker \Sym^3 V \to V$ is not pure for $n \ge 3$.
%
%For $n = 2$, compute the spectra explicitly to see that $\dim (\ker \Sym^3 V \to V)^{t_k} = 10 \le \dim (\ker \Sym^3 V \to V) - (2n+1) - (2n^2+n) - 2 = 18$.
%\end{proof}

\begin{lemma}
\label{l:spin15}
The spinor representation of $\Spin_{15}$ is not pure.
\end{lemma}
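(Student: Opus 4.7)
The plan is to prove impurity via Proposition \ref{prop:hyperplanes}, by showing $\dim V_\lambda^0\sslash L_\lambda \leq \dim V - \dim G - 2 = 21$ for every $1$-parameter subgroup $\lambda$ of $\Spin_{15}$, where $L_\lambda = Z_G(\lambda)$ is the Levi centralizer. This is a strictly finer condition than what Lemma \ref{l:toral} yields at low orders, and indeed Lemma \ref{l:toral} alone does not close the argument here: for instance, the lift $t$ of $\diag(-1,-1,1,\ldots,1)\in \SO_{15}$ already has $\dim V^t=64\gg 21$.

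First I would enumerate the $1$-parameter subgroups $\lambda$ up to the Weyl action of $B_7$. Since $V_\lambda^0$ depends only on the hyperplane in the weight lattice cut out by $\lambda$, this reduces to a finite check parameterized by Weyl classes of standard parabolics of $\Spin_{15}$. For each Levi $L_\lambda$ the fixed space $V_\lambda^0$ is determined by the subset of the $128$ spinor weights $\tfrac12(\epsilon_1,\dots,\epsilon_7)$, $\epsilon_i\in\{\pm1\}$, pairing trivially with $\lambda$, and inherits from the spinor representation a natural description as a tensor product involving (half-)spinor representations of smaller spin groups with standard representations of the $\GL$-factors of $L_\lambda$.

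Next, for each such Levi-representation pair I would bound $\dim V_\lambda^0\sslash L_\lambda$ by $\dim V_\lambda^0-\dim L_\lambda+\dim H$, where $H$ is the generic stabilizer of $L_\lambda$ on $V_\lambda^0$. The relevant generic stabilizers are either computable directly or available from the tables of \cite{PoVi:94} and \cite{Elash:72}, and in each case the resulting dimension should be at most $21$.

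The main obstacle is handling those $\lambda$ whose Levi is largest, in particular where $V_\lambda^0$ still involves a spinor representation of a sizeable smaller spin group. The most delicate case is $\lambda = L_1^\vee - L_2^\vee$, whose Levi is (up to finite cover) $\GL_1\times\SL_2\times\Spin_{11}$ acting on $V_\lambda^0 \simeq \CC^2\otimes\Delta_{11}$, where $\Delta_{11}$ denotes the $32$-dimensional spinor representation of $\Spin_{11}$. Here I would exploit the fact that $\Delta_{11}\sslash\Spin_{11}$ is one-dimensional with generic stabilizer $\SU_5$ to reduce the analysis of $(\CC^2\otimes\Delta_{11})\sslash(\SL_2\times\Spin_{11})$ to understanding how $\SL_2$ acts on pairs of spinors modulo the residual $\SU_5$-action on a second $\Delta_{11}$-factor, which can then be bounded by dimension count. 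Carrying out the parallel analysis for every Weyl class of $1$-parameter subgroups establishes $\codim V^{\sss}\ge 2$, and hence $V$ is not pure.
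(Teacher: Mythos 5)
Your plan is a genuinely different route from the paper's: the paper does \emph{not} prove Lemma \ref{l:spin15} by fixed-point dimension counts at all (indeed it singles this representation out as the one case where that method is not used). Instead it restricts the cofree half-spinor representation $\Spin^+_{16}$ to $\Spin_{15}$, uses the Hilbert--Mumford inclusion $V^{\sss}(\Spin_{15})\subseteq V^{\sss}(\Spin_{16})$ together with purity of the latter, and exhibits an explicit vector (supported on eight weights indexed by a Fano-plane-type system) that is $\Spin_{16}$-unstable but $\Spin_{15}$-stable by the Dadok--Kac orthogonality criterion \cite[Proposition 1.2]{DaKa:85}. That argument requires no generic-stabilizer computations, which is precisely where your plan is incomplete.

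The gap in your proposal is at its decisive step. Your reduction correctly isolates the worst case $V_\lambda^0\simeq\CC^2\otimes\Delta_{11}$ under $\SL_2\times\Spin_{11}$ (this is also what order-$2$ and order-$3$ toral elements produce, which is why Lemma \ref{l:toral} stalls here), and you need $\dim V_\lambda^0\sslash(\SL_2\times\Spin_{11})\le 64-58+\dim H\le 21$, i.e.\ an \emph{upper} bound $\dim H\le 15$ on the generic isotropy group $H$. But the only stabilizer you actually invoke is the $24$-dimensional $\SU_5$ fixing a single generic spinor; using that alone gives $6+24=30>21$, so the inequality is not closed. A ``dimension count'' cannot finish the job: comparing $\dim\SU_5=24$ with $\dim\Delta_{11}=32$ only bounds the residual stabilizer from \emph{below}, whereas you need to show that $\SU_5$ (equivalently $\SL_5$ acting on $\Lambda^\bullet\CC^5$) acts on the second spinor factor with small isotropy --- a nontrivial computation that is not in the tables of \cite{PoVi:94} or \cite{Elash:72} cited elsewhere in the paper, and which you do not carry out. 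Two smaller issues: the case enumeration is not ``parameterized by Weyl classes of standard parabolics'' --- the relevant objects are the (many more) hyperplanes spanned by subsets of the $128$ weights $\tfrac12(\pm1,\dots,\pm1)$, though only the maximal fixed spaces need checking --- and the blanket assertion that ``in each case the resulting dimension should be at most $21$'' is exactly the content that must be proved, not assumed. As written, the proposal is a plausible program rather than a proof.
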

\begin{proof}
Let $V=\Spin^+_{16}$ be the positive half-spinor representation, which is a cofree representation of $\Spin_{16}$, hence pure by Proposition \ref{prop:polar-reps-Vsss-div}. Then the spinor representation $\Spin_{15}$ is the restriction of $V$ under the natural inclusion $\Spin_{15}\subset\Spin_{16}$ of Lie groups. To show $\Spin_{15}$ is not pure, we employ the following strategy. Since $\Spin_{15}$ and $\Spin^+_{16}$ are properly stable representations, by the Hilbert--Mumford Criterion, we have an inclusion
\[ V^{sss}(\Spin_{15})\subseteq V^{sss}(\Spin_{16}). \]
We know that $V^{sss}(\Spin_{16})$ is pure of codimension $1$, so to prove $\Spin_{15}$ is not pure, it suffices to show the above inclusion is strict.

To do so, consider the set
\[ \mathcal{S}=\{\{1, 2, 3, 4, 5, 6, 7\}, \{1, 2, 3\}, \{1, 4, 5\}, \{1, 6, 7\}, \{2, 4, 6\}, \{2, 5, 7\}, \{3, 4, 7\}, \{3, 5, 6\}\}. \]
For each $S\in\mathcal{S}$, let $a_{S,i}=1$ if $i\in S$ and $a_{S,i}=-1$ if $i\notin S$. We then obtain a set of $8$ weights
\[ \mathcal{W}=\left\{\sum_{i=1}^7\frac{a_{S,i}}{2}L_i+\frac{1}{2}L_8\,\middle|\, S\in\mathcal{S}\right\} \]
for $\Spin^+_{16}$. For each $\mu\in\mathcal{W}$, choose a non-zero weight vector $v_\mu$ and let $v=\sum_{\mu}v_\mu$. Since every $\mu$ has a positive $L_8$-coefficient, $v\in V^{sss}(\Spin_{16})$.

We claim that $v\notin V^{sss}(\Spin_{15})$. To see this, let $\mu'$ be the weight of $\Spin_{15}$ induced by $\mu$. Then when $V$ is viewed as a $\Spin_{15}$-representation, our set of weights $\mathcal{W}$ map to the set
\[ \mathcal{W}'=\left\{\frac{1}{2}\sum_{i=1}^7a_{S,i}L_i\,\middle|\, S\in\mathcal{S}\right\}, \]
and moreover, $\sum_{\mu'\in\mathcal{W}'}\mu'=0$. Therefore, the origin is the interior of the convex hull of $\mathcal{W}'$. We further see that the weights of $\mathcal{W}'$ satisfy a strong orthogonality relation: any two $\mu'_1,\mu'_2\in\mathcal{W}'$ differ by $4$ sign flips, so $\mu'_1-\mu'_2$ is not a root. It follows from \cite[Proposition 1.2]{DaKa:85} that $v\in V^s(\Spin_{15})$, the complement of $V^{sss}(\Spin_{15})$.
\end{proof}
\begin{remark}
Lemma \ref{l:spin15} did not rely on Proposition \ref{prop:hyperplanes} to show that the spinor representation of $\Spin_{15}$ was not pure, and so the content of Remark \ref{rem:duals} does not apply.
However, since this representation is self-dual, the dual representation does not need to be handled differently.
\end{remark}

\begin{comment}
\begin{lemma}
\label{l:weights-trivial-action}
If $t = (\zeta^{m_1}, \ldots, \zeta^{m_k})$ is a toral element where $\zeta$ is a primitive $k$th root of unity for odd $k$, then $t$ acts trivially on the weight space with weight $\sum_{i=1}^n a_i L_i$ if and only if $\sum_{i=1}^n a_i m_i = 0 \mod k$.
\end{lemma}
\begin{proof}
Write
\[ t = \exp\left(\frac{2\pi i}{k}\left(m_1 H_1 + \cdots + m_n H_n \right)\right), \]
and remark that the action of $t$ on the weight space $V_\omega$ is multiplication by $e^{\omega(X)}$, where $X = \frac{2\pi i}{k} \left(m_1 H_1 + \cdots + m_n H_n\right)$.
Then
\[ e^{\omega(X)} = \frac{2\pi i}{k}\left(a_1 m_1 + \cdots + a_n m_n\right). \]
On the other hand, the action is trivial if and only if $e^{\omega(X)} = 1$; this is true if and only if $\omega(X) \in 2\pi i\ZZ$, and this true if and only if $a_1 m_1 + \cdots + a_n m_n = 0 \mod k$.
\end{proof}
\end{comment}

\begin{lemma}
\label{l:spin17}
The spinor representation of $\Spin_{17}$ is not pure.
\end{lemma}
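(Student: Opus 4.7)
The plan is to mimic the strategy of Lemma \ref{l:spin15}. Let $V$ denote the spinor representation of $\Spin_{17}$, viewed via the natural inclusion $\Spin_{17} \subset \Spin_{18}$ as the restriction of the half-spinor representation $\Spin^+_{18}$ (both have dimension $256$). Since both representations are properly stable, the Hilbert--Mumford criterion gives the containment
\[
V^{\sss}(\Spin_{17}) \subseteq V^{\sss}(\Spin_{18}),
\]
exactly as in the proof of Lemma \ref{l:spin15}: every $1$-parameter subgroup of $\Spin_{17}$ is also a $1$-parameter subgroup of $\Spin_{18}$, so a vector destabilized by the smaller group is also destabilized by the larger.

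In contrast with Lemma \ref{l:spin15} --- where the ambient representation $\Spin^+_{16}$ was itself cofree, so that an explicit Dadok--Kac stable vector was needed to witness strict containment inside an irreducible codimension-one subvariety --- here the larger representation $\Spin^+_{18}$ is itself impure, which is precisely the content of Lemma \ref{l:spin18}. Consequently $V^{\sss}(\Spin_{18})$ has codimension at least $2$ in $V$, and the containment above immediately forces $V^{\sss}(\Spin_{17})$ to have codimension at least $2$ as well. This proves that $V$ is not pure.

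The main obstacle is that this argument is logically downstream of Lemma \ref{l:spin18}. If one instead wishes to establish Lemma \ref{l:spin17} independently, the alternative is a direct application of Lemma \ref{l:toral} to $\Spin_{17}$ acting on $V$. Up to the Weyl group of $B_8$, the nontrivial order-$2$ toral elements $t$ acting on $V$ with eigenvalues $\pm 1$ are parametrized by the even nonzero cardinalities $|S| \in \{2, 4, 6, 8\}$ of subsets $S \subseteq \{1, \dots, 8\}$. For each such $t$, the centralizer $C_t$ is a subgroup of type $\Spin_{2|S|} \times \Spin_{17-2|S|}$ (up to finite identifications); the branching formula for spinors gives
\[
V^t \cong V^{\pm}_{2|S|} \otimes V_{17-2|S|}^{\mathrm{spin}},
\]
and a direct weight count shows $\dim V^t = 128$ in each of the four cases. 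The required bound $\dim V^t \sslash N_t \leq \dim V - \dim \Spin_{17} - 2 = 118$ is then checked case by case using the known generic stabilizers of the cofree (half-)spinor representations of the smaller spin groups listed in Tables \ref{t:spinodd} and \ref{t:spineven}. The hardest case is $|S|=6$, where $V^t$ is a tensor product $V^+_{12} \otimes V_5^{\mathrm{spin}}$ of the half-spinor of $\Spin_{12}$ with the standard $4$-dimensional representation of $\Spin_5 \cong \Sp_4$, and the tightness of the bound forces one to verify that the generic $\Spin_{12}$-stabilizer on the tensor product is strictly smaller than $\Spin_{12}$ itself --- which follows from the non-triviality of the $\Spin_{12}$-action on $V^+_{12}$.
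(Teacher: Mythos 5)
Your argument is correct in substance but takes a genuinely different route from the paper's proof, which never leaves $\Spin_{17}$: the paper applies Lemma \ref{l:toral} with \emph{order-3} toral elements, shows $\dim V^t\leq 118$ except for $t=\diag(1,\dots,1,\zeta,\zeta^{\pm1})$, and handles that one class by exhibiting a copy of $\Spin_{13}$ in the centralizer acting on $V^t\simeq(\Spin_{13})^{\oplus2}$, whose $16$-dimensional generic stabilizer gives $\dim(V^t\sslash N_t)\leq 66$. Your primary argument instead exploits the branching $\Spin^+_{18}|_{\Spin_{17}}=V$ together with the Hilbert--Mumford containment $V^{\sss}(\Spin_{17})\subseteq V^{\sss}(\Spin_{18})$, i.e.~the same setup as Lemma \ref{l:spin15} but run in ``the easy direction'': since the ambient representation is itself impure, no Dadok--Kac stability witness is needed. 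What this buys is a one-line deduction of Lemma \ref{l:spin17} from Lemma \ref{l:spin18}; what it costs is that the two lemmas must be reordered. That reordering is legitimate, since the proof of Lemma \ref{l:spin18} invokes Lemma \ref{l:spin17} only for the construction of a spin subgroup in a centralizer, not for its conclusion.

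Two points do need repair. First, your deduction requires $\codim V^{\sss}(\Spin_{18})\geq2$, which is strictly stronger than the \emph{statement} of Lemma \ref{l:spin18}: ``not pure'' only denies that $V^{\sss}$ has pure codimension one and is compatible with a divisorial component surviving, in which case the containment tells you nothing. The stronger codimension bound is exactly what the proof of Lemma \ref{l:spin18} establishes via Lemma \ref{l:toral}, so you should cite that proof (or restate the lemma) rather than the bare impurity claim. Second, in your fallback order-2 computation the case $|S|=6$ is not closed by the criterion you state: knowing only that the generic $\Spin_{12}$-stabilizer on $\Spin^+_{12}\otimes\CC^4$ is a proper subgroup yields at best $\dim V^t\sslash\Spin_{12}\leq128-66+65=127>118$. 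The case does go through, but one must use the actual generic stabilizer of $\Spin_{12}$ on a single copy of its half-spinor representation, namely $\SL_6$ of dimension $35$, giving $128-66+35=97\leq118$; alternatively, quotienting by the $\Spin_5\cong\Sp_4$ factor, which acts on $(\CC^4)^{\oplus32}$ with trivial generic stabilizer, already gives $128-10=118$.
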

\begin{proof}
Let $V=\Spin_{17}$ be the spinor representation of the group $\Spin_{17}$ and
%Its weights are $\frac{1}{2}\sum_{i=1}^8 a_iL_i$ with $a_i=\pm1$, all of multiplicity 1.
let $t=\diag(\zeta^{m_1},\dots,\zeta^{m_8})$ be a toral element with $\zeta$ a primitive 3rd root of unity. Using the fact that $t$ acts trivially on the weight space of $\frac{1}{2}\sum_{i=1}^8 a_iL_i$ if and only if $\sum_{i=1}^n a_i m_i = 0 \mod 3$, one checks that $\dim V^t\leq 118=2^8-{17\choose 2}-2$ unless $t=\diag(1,\dots,1,\zeta,\zeta^m)$ with $m=\pm1$.

To handle this remaining case, we begin by constructing a copy of $\Spin_{13}$ in the centralizer of $t$.
In the notation of \cite[p.~370 (23.8)]{FuHa:91}, we have
\[ t = w(1,\dots,1,\zeta,\zeta^m)=\frac{1}{4}(\zeta e_7e_{15}+\zeta^{-1}e_{15}e_7)(\zeta^m e_8e_{16}+\zeta^{-m}e_{16}e_8). \]
Let $\mathcal{S}=\{7,8,15,16\}$. Since $e_i$ is orthogonal to $e_j$ for $i\notin\mathcal{S}$ and $j\in\mathcal{S}$, the even Clifford algebra generated by the $e_i$ for $i\notin\mathcal{S}$ yields a copy of $\Spin_{13}\subset\Spin_{17}$ that commutes with $t$.

Next, since $V^t$ is the direct sum of the weight spaces with weights $\frac{1}{2}\sum_{i=1}^8 a_iL_i$ and $a_7+ma_8=0\mod 3$, 
%The induced map on weight lattices is given by $L_i\mapsto L_i$ for $i\leq6$ and $L_i\mapsto0$ for $i\geq7$. So, 
as a $\Spin_{13}$-representation, we have
\[ V^t=(\Spin_{13})^{\oplus2}, \]
i.e., it is two copies of the spinor representation. From \cite[p.~262]{PoVi:94}, we see the generic stabilizer of $\Spin_{13}$ is $16$-dimensional, so
\[ \dim(V^t\sslash N_t)\leq\dim(V^t\sslash \Spin_{13})\leq 2^7-{13\choose 2}+16=66\leq118. \]
Thus, $V$ is not pure.
\end{proof}

\subsection{The case of $\Spin_{2n}$}

\begin{lemma}
\label{l:spin-even-extra}
The representations $\Gamma_{\omega_3}$ for $n \ge 5$ and $\Gamma_{3\omega_1}$ for $n \ge 3$ of $\Spin_{2n}$ are not pure.
\end{lemma}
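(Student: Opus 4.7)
The plan is to adapt the approach of Lemma \ref{l:spin-odd-extra} to the even orthogonal case. I would realize $\Gamma_{3\omega_1}$ as the kernel of the contraction $\Sym^3 V \to V$ coming from the invariant symmetric bilinear form on the standard $2n$-dimensional representation $V$, and identify $\Gamma_{\omega_3} = \extp^3 V$ for $n \ge 5$ (noting that for $n = 4$ triality intervenes and $\extp^3 V$ has highest weight $\omega_3 + \omega_4$ instead). Both representations factor through $\SO_{2n}$, so the central kernel of $\Spin_{2n} \to \SO_{2n}$ acts trivially; consequently we may restrict attention to order-$2$ toral elements $t$ acting non-trivially on $V$, for which $k := \dim V^t$ is even with $0 \le k \le 2n-2$.

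Decomposing $V = V^+ \oplus V^-$ into the $(\pm 1)$-eigenspaces of $t$ and using the exactness of taking invariants, I obtain
\[
\dim \Gamma_{3\omega_1}^t = \binom{k+2}{3} + k\binom{2n-k+1}{2} - k, \qquad \dim \Gamma_{\omega_3}^t = \binom{k}{3} + k\binom{2n-k}{2}.
\]
To invoke Lemma \ref{l:toral}, it suffices to bound each quantity by $\dim V - \dim \Spin_{2n} - 2$ for every admissible even $k$. For $\Gamma_{3\omega_1}$, the difference of consecutive values computes to $4(k-n)^2 + 8k - 6n + 4$, which is positive on the relevant range; hence the quantity is strictly increasing in $k$ and checking the boundary $k = 2n-2$ reduces to $2n^2 - 5n + 2 = (2n-1)(n-2) \ge 0$, valid for all $n \ge 3$.

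For $\Gamma_{\omega_3}$ the cubic is not monotone: the analogous discrete difference $4(k-n)^2 - 10n + 8k + 6$ produces an interior local maximum near $k \approx n - \sqrt{n/2}$ in addition to the boundary maximum at $k = 2n-2$. The boundary check reduces to $2n^2 - 9n + 4 \ge 0$, valid for $n \ge 5$, and for large $n$ the interior maximum is of order $\tfrac{2n^3}{3}$, comfortably below the target bound $\sim \tfrac{4n^3}{3}$. The main obstacle is to verify that the interior maximum also stays beneath the bound in the borderline small cases; for $n = 5, 6, 7$ this is a short direct tabulation of the cubic at each admissible even $k$ (for instance, when $n = 5$ one finds the maximum value is $64$, well below the target $73$). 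With these uniform bounds in hand, Lemma \ref{l:toral} yields that neither representation is pure.
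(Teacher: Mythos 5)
Your proposal is correct and follows essentially the same route as the paper: identify $\Gamma_{\omega_3}=\extp^3\CC^{2n}$ and $\Gamma_{3\omega_1}=\ker(\Sym^3\CC^{2n}\to\CC^{2n})$, compute $\dim V^t$ for order-$2$ toral elements via the eigenspace decomposition (your two formulas agree exactly with the paper's), and apply Lemma \ref{l:toral} after checking the resulting cubics in $k$ against $\dim V-\dim\Spin_{2n}-2$. The only difference is that you make the "one can check" step explicit via the discrete differences and the boundary/interior maximum analysis, which is a welcome refinement but not a different argument.
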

\begin{proof}
This proof is very similar to the proof of Lemma \ref{l:spin-odd-extra}, so we carry it out with somewhat less detail.
Note that $\Gamma_{\omega_3} = \extp^3 \Spin_{2n}$ for $n \ge 5$, and so if $t$ is an order 2 toral element and $k = \dim \Spin_{2n}^t$, then
\[ \dim \Gamma_{\omega_3}^t = \binom{k}{3} + k\binom{2n-k}{2}. \]
As we remarked previously for the odd spin groups, the only possibilities for $k$ will be even, and if $k = 2n$ then $t = 1$ is not of order 2; so only $0 \le k \le 2n-2$ must be considered.
Except for $n = 5$, the maximum is always reached at the endpoint $2n-2$, where we can check that $\dim \Gamma_{\omega_3}^t \le \dim \Gamma_{\omega_3} - \dim \Spin_{2n} - 2$, and so none of these representations are pure by Lemma \ref{l:toral}.
When $n = 5$ we can check again that no choice of $0 \le k \le 8$ gives $\dim \Gamma_{\omega_3}^t \le \dim \Gamma_{\omega_3} - \dim \Spin_{10} - 2$, so once more by Lemma \ref{l:toral}, $\Gamma_{\omega_3}$ is not pure for $n=5$.

The proof for $\Gamma_{3\omega_1}$ is similar to the case of the third symmetric power for odd spin groups: we have $\Gamma_{3\omega_1} = \ker(\Sym^3 \Spin_{2n} \to \Spin_{2n})$ and so for a toral element $t$ of order two,
\[ \dim \Gamma_{3\omega_1}^t = \dim (\ker \Sym^3 \Spin_{2n} \to \Spin_{2n})^t = \dim (\Sym^3 \Spin_{2n})^t - \dim \Spin_{2n}^t. \]
If we write $k = \dim \Spin_{2n}^t$, then
\[ \dim \Gamma_{3\omega_1}^t = \binom{k+2}{3}+k\binom{2n-k+1}{2} - k \]
and we can again check that for $0 \le k \le 2n-2$, this is less than $\dim \Gamma_{3\omega_1} - \dim \Spin_{2n} - 2$, so by Lemma \ref{l:toral}, these representations are not pure.
\end{proof}

%\begin{lemma}
%\label{l:spin-even-omega31}
%The third symmetric power $3\omega_1$ of $\Spin_{2n}$ is not pure for any $n \ge 3$.
%\end{lemma}
%\begin{proof}
%\end{proof}

\begin{lemma}
\label{l:spin18}
The half-spinor representations of $\Spin_{18}$ are not pure.
\end{lemma}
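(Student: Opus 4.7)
The plan is to follow the strategy of Lemma \ref{l:spin17} and apply Lemma \ref{l:toral} with order-$3$ toral elements. By Remark \ref{r:outerauto}, it suffices to treat $V=\Gamma_{\omega_9}$, the positive half-spinor representation of $\Spin_{18}$, which has dimension $2^8 = 256$. Since $\dim\Spin_{18} = \binom{18}{2} = 153$, the goal is to show $\dim V^t \sslash N_t \leq 256 - 153 - 2 = 101$ for every order-$3$ toral element $t$ up to Weyl group conjugacy.

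Writing $t=\diag(\zeta^{m_1},\ldots,\zeta^{m_9})$ with $\zeta$ a primitive cube root of unity, the same character calculation as in Lemma \ref{l:spin17} identifies $\dim V^t$ as the number of sign sequences $(a_1,\ldots,a_9)\in\{\pm 1\}^9$ with an even number of entries equal to $-1$ and $\sum_i a_i m_i\equiv 0\pmod 3$. Using the Weyl group of type $D_9$ (permutations together with sign flips at an even number of positions, which swap the residues $1$ and $2$ modulo $3$), every nonzero exponent vector may be reduced to the form $(0^a, 1^b, 2^c)$ with $b\geq c$. I would enumerate this finite list and verify by direct counting that $\dim V^t\leq 101$ in all cases except those where $t$ has very few nontrivial entries, notably $t=\diag(1^7,\zeta,\zeta^{\pm 1})$ and $\diag(1^7,\zeta,\zeta)$.

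For each exceptional $t$, mimicking the centralizer construction in Lemma \ref{l:spin17}, if $t$ has $2k$ nontrivial entries confined to the last $2k$ coordinates, the even Clifford algebra generated by $e_i, e_{9+i}$ for $i \le 9-k$ yields a subgroup $\Spin_{18-2k}\subset N_t$ commuting with $t$, and as a representation of this subgroup $V^t$ decomposes as a direct sum of half-spinor representations of $\Spin_{18-2k}$. Since these smaller half-spinor representations are cofree by entries (1) and (8) of Table \ref{t:spineven}, their generic stabilizers are recorded in \cite{PoVi:94}, and a straightforward dimension count gives the bound on $\dim V^t\sslash N_t$. The principal obstacle is expected to be the case $t=\diag(1^7,\zeta,\zeta^{-1})$, where one computes $\dim V^t = 128$ and must recognize $V^t$ as a direct sum of $\Spin_{14}$ half-spinors (each of dimension $64$). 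Since $\dim\Spin_{14}=91$ is itself close to $128$, one must correctly identify whether the decomposition is two copies of a single half-spinor or the sum of both half-spinors, and then invoke the known generic stabilizer $G_2\times G_2$ of the half-spinor of $\Spin_{14}$ to conclude $\dim V^t\sslash N_t\leq 128 - 91 + 28 = 65\leq 101$, completing the proof.
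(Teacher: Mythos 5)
Your proposal is correct and follows essentially the same route as the paper: reduce to $\Spin^+_{18}$ via the outer automorphism, rule out all order-$3$ toral elements except $t=\diag(1,\dots,1,\zeta,\zeta^{\pm1})$ by the weight-counting bound $\dim V^t\le 101$, and handle the exceptional $t$ by exhibiting $\Spin_{14}$ in its centralizer, identifying $V^t$ as two copies of a half-spinor of $\Spin_{14}$, and using the $28$-dimensional generic stabilizer to get $128-91+28=65\le 101$. The paper resolves the one point you flag as needing care: $V^t\simeq(\Spin^-_{14})^{\oplus2}$ when $m=1$ and $(\Spin^+_{14})^{\oplus2}$ when $m=-1$, though the final bound is the same either way.
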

\begin{proof}
Since there is an outer automorphism of $\Spin_{18}$ interchanging the two half-spinor representations, it suffices by Remark \ref{r:outerauto} to consider the representation $V=\Spin^+_{18}$. Then the weights of $V$ are $\frac{1}{2}\sum_{i=1}^9 a_iL_i$ with $a_i=\pm1$, and an even number of $a_i=-1$. We follow the same strategy of proof as in Lemma \ref{l:spin17}. One checks that if $t$ is an order 3 toral element, then $\dim V^t\leq 101=2^8-{18\choose 2}-2$ unless $t=\diag(1,\dots,1,\zeta,\zeta^m)$ with $m=\pm1$. It remains to handle this latter case.

As in the proof of Lemma \ref{l:spin17}, there is again a copy of $\Spin_{14}$ in the centralizer of $t$. Note that $V^t$ is the direct sum of the weight spaces where weights $\frac{1}{2}\sum_{i=1}^9 a_iL_i$ with $a_i=\pm1$, $a_8+ma_9=0\mod 3$, and $\prod_i a_i=1$; said another way, it is a direct sum of weight spaces with weights of the form $\frac{1}{2}(\sum_{i=7}a_iL_i + a_8(L_8-mL_9))$, where $-m=\prod_{i=1}^7a_i$. Thus, viewing $V^t$ as a $\Spin_{14}$-representation, we have
\[
V^t\simeq
\begin{cases}
(\Spin^-_{14})^{\oplus2},& m=1\\
(\Spin^+_{14})^{\oplus2},& m=-1.
\end{cases}
\]
From \cite[p.~262]{PoVi:94}, we see the generic stabilizer of $\Spin_{14}$ is $28$-dimensional, so
\[ \dim(V^t\sslash N_t)\leq\dim(V^t\sslash \Spin_{14})\leq 2^7-{14\choose 2}+28=65\leq101. \]
Thus, $V$ is not pure.
\end{proof}

\part{Actions of tori}
\label{part:torus-actions}

We now turn our attention to torus representations. We prove Theorem
\ref{part:general-facts}.\ref{thm:conj-for-tori} in
\S\ref{sec.proof-of-conj-for-tori}. In
\S\ref{sec:more-torus-examples}, we give examples that distinguish the
classes of representations pure, coprincipal, and
coregular. The most subtle of these is Example
\ref{ex:counter-example}, which shows that coprincipal is not equivalent
to pure; this is in contrast to Lemma
\ref{part:general-facts}.\ref{l:allpure} which shows that pure and coprincipal are equivalent for connected $G$ with no non-trivial characters.

\section{Proof of Theorem \ref{part:general-facts}.\ref{thm:conj-for-tori}}
\label{sec.proof-of-conj-for-tori}
%\dan{Rewrote some of this section. I can only prove part (2) of Lemma 1.4 for torus actions or when V is a properly stable rep of $T \times T$ since I need a Hilbert-Mumford criterion, which is fine since we are only doing torus actions. I include only the proof in the torus case since the other one is not relevant for what we do here.}

Our initial goal is to prove the following proposition. This is done after several preliminary lemmas. Throughout this section, if $V$ is a $G$-representation, then we denote by $V^{\sss}(G)$ the strictly semi-stable locus for the action of $G$.

\begin{proposition} \label{prop:factor-cofree}
  Let $V_1$ and $V_2$ be stable representations
  of a torus $T$. Let $V=V_1 \oplus V_2$ be a decomposition as $T$-representations 
  and assume that $V/T  = V_1/T \times V_2/T$.
  Then $V$ is cofree (resp.~coprincipal) if and only $V_1$ and $V_2$ are cofree (coprincipal) representations.
\end{proposition}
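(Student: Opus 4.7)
The plan is to derive both equivalences from product decompositions of $V^{\sss}$ and of the null cone $\N_T(V)$ that follow from the hypothesis. First, the equality $V/T=V_1/T\times V_2/T$ is equivalent to $K[V]^T=K[V_1]^T\otimes K[V_2]^T$, and this in turn is equivalent to the $\QQ$-subspaces $L_i\subseteq X^*(T)\otimes\QQ$ generated by the weights of $V_i$ satisfying $L_1\cap L_2=0$. In particular $\pi=(\pi_1,\pi_2)$ and every fibre factors as $\pi^{-1}(x_1,x_2)=\pi_1^{-1}(x_1)\times\pi_2^{-1}(x_2)$.

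The next step is to prove $V^s=V_1^s\times V_2^s$. For the inclusion $\supseteq$: if $v_i\in V_i^s$ then $\pi_i^{-1}(\pi_i(v_i))=Tv_i$, so the fibre $Tv_1\times Tv_2$ decomposes under the diagonal $T$-action into finitely many translates of a single subtorus, each of which is closed and disjoint from the others; in particular $T(v_1,v_2)$ is closed and not contained in the closure of any other orbit. For the reverse inclusion I would apply the Hilbert--Mumford criterion for tori, namely that $v\in V^s$ iff $0$ lies in the relative interior of the convex hull of the $T$-weights of the nonzero coordinates of $v$. Given $(v_1,v_2)\in V^s$, any strict convex combination $0=\sum\lambda_is_i+\sum\mu_jt_j$ with $s_i$ weights of $V_1$ and $t_j$ weights of $V_2$ forces $\sum\lambda_is_i=-\sum\mu_jt_j\in L_1\cap L_2=0$, so after rescaling, $0$ lies in the relative interior of the weight polytope of each $v_i$ separately. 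This yields
\[
V^{\sss}=(V_1^{\sss}\times V_2)\cup(V_1\times V_2^{\sss}).
\]

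For the cofree statement, $\N_T(V)=\pi^{-1}(0)=\N_T(V_1)\times\N_T(V_2)$ by the product structure of fibres, hence $\dim\N_T(V)=\dim\N_T(V_1)+\dim\N_T(V_2)$. Combined with $\dim V-\dim V/T=\sum_i(\dim V_i-\dim V_i/T)$ and the general lower bound $\dim\N_T(V_i)\geq\dim V_i-\dim V_i/T$, Wehlau's theorem \cite{Weh:92}---which confirms that a torus representation $W$ is cofree iff $\dim\N_T(W)=\dim W-\dim W/T$---immediately gives that $V$ is cofree iff each $V_i$ is cofree.

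For the cnpure statement, the irreducible components of $V^{\sss}$ are the sets $D\times V_2$ for $D$ a component of $V_1^{\sss}$ together with $V_1\times D'$ for $D'$ a component of $V_2^{\sss}$. These have codimension one iff each $V_i^{\sss}$ is pure of codimension one. Under $\pi$, the component $D\times V_2$ maps to $\overline{\pi_1(D)}\times V_2/T$, which is a divisor iff $\overline{\pi_1(D)}$ is; and using $K[V]^T=K[V_1]^T\otimes K[V_2]^T$, the defining equation of such a divisor may be taken to be the pullback $f\otimes 1$, so principality in $V/T$ is equivalent to principality of $\overline{\pi_1(D)}$ in $V_1/T$. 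The symmetric statement for $V_2$-components completes the proof. The main obstacle is the reverse containment $V^s\subseteq V_1^s\times V_2^s$, which rests crucially on $L_1\cap L_2=0$; without this hypothesis one can easily construct stable $(v_1,v_2)\in V$ with some $v_i$ in $\N_T(V_i)$.
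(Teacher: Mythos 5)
Your argument is correct in substance, but it takes a genuinely different route from the paper's on both halves. For the cofree equivalence the paper argues in two lines that $V/T$ is smooth iff both $V_i/T$ are and that $\pi=\pi_1\times\pi_2$ is flat iff both $\pi_i$ are; this is elementary and works for any reductive group. You instead pass through $\dim\N_T(V)=\dim\N_T(V_1)+\dim\N_T(V_2)$ and invoke Wehlau's theorem. This is valid and not circular --- the paper itself cites Wehlau for exactly the reformulation of cofreeness in terms of the null cone --- but it is a much heavier tool than needed and is specific to tori. For the cnpure equivalence the paper obtains $V^{\sss}(T)=(V_1^{\sss}\times V_2)\cup(V_1\times V_2^{\sss})$ by comparing the diagonal $T$-action with the product $T\times T$-action (Lemmas \ref{lem:stab-prod} and \ref{lem:diag=prod-sss}), via saturation arguments with invariant functions; you reach the same decomposition through weight combinatorics after observing that the hypothesis forces the weight spans to satisfy $L_1\cap L_2=0$. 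That observation is a nice reformulation and your route is arguably more transparent for tori. The final component-by-component Cartier analysis coincides with the paper's.

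Two caveats, neither fatal. First, the equivalence of $K[V]^T=K[V_1]^T\otimes K[V_2]^T$ with $L_1\cap L_2=0$ genuinely uses the stability of $V_1$ and $V_2$: stability is what guarantees that the weights of $V_i$ positively span $L_i$, which is what lets you convert a nonzero element of $L_1\cap L_2$ into an invariant monomial mixing the two factors. (For $T=\GG_m$ acting on $V_1=V_2=K$ each with weight $1$, the tensor decomposition holds trivially while $L_1=L_2=\QQ$.) You should state this dependence. Second, your Hilbert--Mumford criterion is slightly off: ``$0$ lies in the relative interior of the convex hull of the weights of the nonzero coordinates of $v$'' characterizes closedness of $Tv$, but the paper's notion of stability also excludes vectors whose orbit closure meets a point with larger-than-generic stabilizer. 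The correct criterion is that the weights supporting $v$ positively span the whole weight space $L$ of $V$, i.e., $0$ lies in the relative interior \emph{and} the supporting weights span $L$; as written, your criterion would declare a vector supported only on zero-weight coordinates stable when it is not. This does not break the proof: since $L=L_1\oplus L_2$, the span condition also decomposes factorwise, and your projection argument for the positive-combination condition goes through verbatim with the corrected criterion.
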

\begin{remark}
  Note the condition that $V/T = V_1/T \times V_2/T$ is a very strong
  since it implies that $K[V]^T=K[V_1]^T \otimes_K K[V_2]^T$.
  \end{remark}

%We begin with the following lemma.
\begin{lemma} \label{lem:1Dcofree}If $V$ is a stable representation of a reductive group $G$ such that $\dim V/G = 1$, then
  $V$ is cofree and coprincipal.
\end{lemma}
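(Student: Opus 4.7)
The plan is to proceed in three steps. First I would identify $V/G$ as $\AA^1$. Since $V$ is a representation of the reductive group $G$, the invariant ring $R := K[V]^G$ is a graded, finitely generated $K$-algebra with $R_0 = K$, is an integral domain (as a subring of $K[V]$), and is normal (being a direct summand of the normal ring $K[V]$ via the Reynolds operator). A $1$-dimensional normal graded $K$-algebra with $R_0 = K$ is forced to be $K[t]$ for some homogeneous $t$ of positive degree, so $V/G \cong \AA^1$ and in particular $V$ is coregular.

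Second, for cofreeness I would show $K[V]$ is flat over $R = K[t]$ and then promote flatness to freeness. Flatness is immediate: $K[V]$ is an integral domain containing $K[t]$, hence torsion-free, and over the PID $K[t]$ torsion-free modules are flat. For freeness, I lift a homogeneous $K$-basis of $K[V]/(t)$ to homogeneous elements $\{e_i\} \subset K[V]$. Graded Nakayama applies because $K[V]$ is non-negatively graded with finite-dimensional graded pieces and $\deg t > 0$; it implies that the $e_i$ generate $K[V]$ as an $R$-module. Writing $N$ for the kernel of the induced surjection $F := \bigoplus_i R\langle e_i\rangle \twoheadrightarrow K[V]$, the long exact sequence obtained by applying $- \otimes_R R/(t)$ yields $N/tN = 0$, since $\Tor_1^R(K[V], R/(t)) = 0$ by flatness and $F/tF \to K[V]/(t)K[V]$ is an isomorphism by construction. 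A second application of graded Nakayama forces $N = 0$, so $K[V]$ is a free $R$-module.

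Third, for cnpure, since $V$ is stable the open $G$-saturated set $V^s$ is nonempty, so $V^{\sss} = \pi^{-1}(Z)$ for a finite set $Z = \{p_1,\dots,p_k\} \subset \AA^1 = V/G$. Each fiber $\pi^{-1}(p_i) = V(t - p_i)$ has pure codimension $1$ by Krull's Hauptidealsatz applied in the Cohen--Macaulay ring $K[V]$, so $V^{\sss}$ itself is pure of codimension $1$, and each of its irreducible components maps to the principal divisor $V(t - p_i)$ on $\AA^1$, giving cnpure. The main obstacle is the flat-implies-free step; it would fail for a general flat $R$-module since $K[V]$ is not a finitely generated $R$-module, but the non-negative grading together with finite-dimensionality of each graded piece lets graded Nakayama carry the argument through.
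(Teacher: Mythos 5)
Your proposal is correct and follows essentially the same route as the paper: identify $K[V]^G$ with $K[t]$, deduce freeness of $K[V]$ from torsion-freeness over this PID, and observe that $V^{\sss}$ is a union of fibres of $\pi$, hence a principal divisor mapping to principal divisors. Your graded-Nakayama argument usefully fills in the step the paper states in one line (``free as it is torsion free''), which is only literally valid for finitely generated modules and really does need the grading here; the paper additionally pins down $V^{\sss}=V(f)$ exactly, but your weaker statement $V^{\sss}=\pi^{-1}(Z)$ for $Z$ finite suffices.
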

\begin{proof}
Since $\dim V/G = 1$, $K[V]^G$ is a polynomial ring in one variable and hence
$K[V]$ is free over $K[V]^G$ as it is torsion free.
If $f \in K[V]^G$ generates $K[V]^G$ as a $K$-algebra, then $V(f) = V^{\sss}$,
so $V^{\sss}$ is a Cartier divisor whose image is the Cartier divisor $0 \in
\spec K[V]^G$.
\end{proof}

\begin{lemma} \label{lem:stab-prod}
Let $V_1$ and $V_2$ be representations of a reductive algebraic group $G$ and let
$V = V_1 \oplus V_2$ with the product $G \times G$ action. Then
\begin{enumerate}
\item $V^{\sss}(G \times G) = (V_1^{\sss}(G) \times V_2) \cup (V_1 \times V_2^{\sss}(G))$.
\item If $G=T$ is a torus then $V^{\sss}(T) \subset V^{\sss}(T \times T)$ where the $T$-action is the diagonal action.
\end{enumerate}
\end{lemma}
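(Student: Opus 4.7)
The plan is to prove both parts by direct orbit-theoretic analysis.

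For part (1), I would show the equivalent complementary identity $V^s(G\times G) = V_1^s(G) \times V_2^s(G)$. The key observation is that under the product action, both orbits and their closures split as products: $(G\times G)\cdot(v_1,v_2) = Gv_1 \times Gv_2$ and $\overline{(G\times G)\cdot(v_1,v_2)} = \overline{Gv_1}\times \overline{Gv_2}$. From these, closedness of a product orbit reduces to closedness of each factor, and a containment $(v_1,v_2)\in \overline{G(w_1,w_2)}$ with $G(w_1,w_2)\ne G(v_1,v_2)$ forces $v_i\in \overline{Gw_i}$ with $Gw_i \ne Gv_i$ for some $i$, so that $v_i$ fails to be stable in $V_i$. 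Both directions of the equivalence are then immediate, and taking complements yields the claimed decomposition of $V^{\sss}(G\times G)$.

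For part (2), using part (1) we have $V^s(T\times T) = V_1^s(T)\times V_2^s(T)$, so taking complements reduces the claim to showing that if each $v_i$ is $T$-stable in $V_i$, then $(v_1,v_2)$ is stable for the diagonal $T$-action. I would verify the two defining conditions. First, the diagonal orbit $T\cdot(v_1,v_2)$ lies inside the closed set $Tv_1 \times Tv_2 \subset V$. Writing $S_i := \stab_T(v_i)$, which is a closed subgroup of the abelian group $T$ and therefore normal, I identify $Tv_1 \times Tv_2 \cong T/S_1 \times T/S_2$ as algebraic groups; the diagonal $T$-action then corresponds to multiplication through the group homomorphism $T \to T/S_1\times T/S_2$, $t\mapsto (tS_1,tS_2)$, whose image is a closed subgroup. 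The orbit of $(v_1,v_2)$ is a translate of this image, hence closed in $Tv_1\times Tv_2$ and therefore in $V$. Second, suppose $(v_1,v_2) \in \overline{T\cdot(w_1,w_2)}$; projecting to each factor gives $v_i \in \overline{Tw_i}$, and stability of each $v_i$ forces $Tw_i = Tv_i$, so $w_i = s_iv_i$ for some $s_i \in T$. Then $T\cdot(w_1,w_2) = T\cdot(s_1v_1,s_2v_2)$ is itself closed by the first step, so $(v_1,v_2)$ lies in it and the two diagonal orbits coincide.

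The principal subtlety is showing that the diagonal orbit in $Tv_1\times Tv_2$ is closed, where the commutativity of $T$ is crucial: it forces each $S_i$ to be normal, so $T/S_1 \times T/S_2$ carries an algebraic group structure and the diagonal action becomes multiplication by a closed subgroup. Everything else is a routine chase through the definition of stability and the factorization of product orbits and closures.
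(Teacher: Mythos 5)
Your argument is correct, and it reaches both conclusions by a genuinely different route from the paper's proof. For part (1), the paper characterizes a strictly semistable vector as one whose orbit fails to be saturated for the quotient map and then works with invariant functions, using $K[V]^{G\times G}=K[V_1]^G\otimes_K K[V_2]^G$ and evaluating functions of the form $f_1\otimes 1$ to transfer non-saturation between $V$ and $V_1$; you instead argue purely orbit-theoretically from the definition of stability, using that orbits and orbit closures for the product action factor as products. Your version is more elementary and makes the identity $V^s(G\times G)=V_1^s(G)\times V_2^s(G)$ explicit; the paper's invariant-function technique has the advantage that it is exactly the tool reused in Lemma \ref{lem:diag=prod-sss}, where the comparison is between the diagonal and product actions under the hypothesis $V/G=V_1/G\times V_2/G$ and orbits no longer split as products. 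For part (2), the paper converts strict semistability for a torus into the existence of a one-parameter subgroup, not contained in the kernel of the action, for which the vector has only non-negative weights, and observes that the diagonal image of such a $\lambda$ destabilizes for $T\times T$; you prove the equivalent inclusion $V^s(T\times T)\subseteq V^s(T)$ directly, with commutativity entering through the normality of the stabilizers $S_i$, so that the diagonal orbit of a pair of stable vectors is identified with the image of the homomorphism $T\to T/S_1\times T/S_2$ and is therefore closed. Both treatments are sound; yours depends on part (1) and on the closedness of images of homomorphisms of algebraic groups, while the paper's part (2) is self-contained and stays within the Hilbert--Mumford framework used throughout that section.
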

\begin{proof}
%  \dan{Rewrote first part of this proof to avoid using the Hilbert-Mumford criterion since the representation may not be properly stable. Can't avoid it in the second part, so have given it only for tori.}
  We first show that $V^{\sss}(G \times G) \subset (V_1^{\sss}(G) \times V_2) \cup (V_1 \times V_2^{\sss}(G))$. 
  A vector $v=(v_1,v_2) \in V$ is $(G \times G)$-strictly semi-stable if and only if the orbit $(G\times G)v$ is not saturated with respect to the quotient map, i.e.~if there exists $v' = (v_1',v_2')$ such $v'$ has the same
  image in $(V_1 \oplus V_2)/(G \times G)$ and $v'$ is not in the same
  $G \times G$ orbits as $(v_1, v_2)$. Since $V_1 \oplus V_2$ has the product
  action we must have either $v_1' \notin Gv_1$ or $v_2' \notin Gv_2$.
  Assume without loss of generality that $v'_1 \notin Gv_1$.

  Since $v'$ has the same image as $v$ under the quotient map,
  $h(v) = h(v')$ for all $h \in K[V]^{G \times G} = K[V_1]^G \otimes K[V_2]^G$.
  In particular for all $f_1 \in K[V_1]^G$, $(f_1 \otimes 1)(v) = (f_1 \otimes 1)(v')$. But $(f_1 \otimes 1)(v) = f_1(v_1)$ and $(f_1 \otimes 1)(v') = f_1(v_1')$.
  Thus, $v_1$ and $v_1'$ have the same value on all $G$-invariant functions
  on $V_1$ but do not lie in the same orbit, so $v_1 \in V_1^{\sss}(G)$.

  To prove $(V_1^{\sss}(G) \times V_2) \cup (V_1 \times V_2^{\sss}(G))\subset V^{\sss}(G \times G)$, by symmetry, it enough to show $V_1^{\sss}(G) \times V_2
  \subset V^{\sss}(G \times G)$. If $v_1 \in V_1^{\sss}(G)$ then we know
  there is a vector $v'_1 \notin GV_1$ such that $f_1(v'_1) = f(v_1)$
  for all $f_1 \in K[V_1]^G$. Hence, if $v_2 \in V_2$ is any vector
  then $v = (v_1, v_2)$ and $v'=(v'_1, v_2)$ are not in the same
  $G \times G$ orbit, but $(f_1 \otimes f_2)(v) = (f_1 \times f_2)(v')$
  for all $f_1 \in K[V_1]^G$ and $f_2 \in K[V_2]^G$. Since
  $K[V]^{G \times G} = K[V_1]^G \otimes K[V_2]^G$ it follows
  that any $(G \times G)$-invariant function has the same value
  on $v$ and $v'$, but these two vectors are not in the same orbit. Hence
  $v\in V^{\sss}(G \times G)$. This proves part (1).

  We now prove (2). First note that if $V$ is any representation of a
  reductive group $G$ then a vector $v \in V$ is strictly semi-stable
  if and only if there is a vector $v' \in \overline{Gv}$ such that
  $\dim G_{v'} > d$ where $d$ is the generic stabilizer dimension of
  $V$. When $G=T$ is a torus, then for all vectors $v$, $G_v \supset
  K_0$ where $K_0$ is the kernel of the action and the generic stabilizer
  equals $K_0$. In particular if $\dim G_{v'} > d$ then $G_{v'}$
  contains a 1-parameter subgroup not contained in $K_0$.  It follows that $v
  \in V^{\sss}(T)$ if and only if the following condition holds: there
  is a 1-parameter subgroup $\lambda$ not contained in $K_0$ such that $v$ has
  only non-negative weights for the action of $\lambda$.

  Given $V = V_1 \oplus V_2$, let $K_1$ and $K_2$ be the kernels of the
  actions of $T$ on $V_1$ and $V_2$, respectively. Then the kernel of
  the diagonal action of $T$ on $V$ is $K_1 \cap K_2 \subset T$ and the
  kernel of the action of $T \times T$ is $K_1 \times K_2$. Suppose that $(v_1, v_2) \in V^{\sss}(T)$.
  Then there is a 1-parameter subgroup $\lambda$ of $T$ not contained in $K_1 \cap K_2$ such
  that $(v_1, v_2)$ has only non-negative weights with respect to the action of $\lambda$. The image
  of $\lambda$ in $T \times T$ under the diagonal embedding is not contained in $K_1 \times K_2$. Therefore
  $(v_1, v_2)$ is also in $V^{\sss}(T \times T)$.
\end{proof}
\begin{lemma} \label{lem:diag=prod-sss}
  Let $G$ be a reductive group. Suppose that $V = V_1 \oplus V_2$ and $V/G = V_1/G \times V_2/G$.
  Then $V^{\sss}(G) \supset V^{\sss}(G \times G)$ where the action of $G$ is the diagonal
  action.
\end{lemma}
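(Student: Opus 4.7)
The plan is to leverage the following standard characterization from GIT: a vector $v \in V$ lies in $V^{\sss}(G)$ if and only if there exists $v'\in V$ with $v'\notin Gv$ but $h(v) = h(v')$ for every $h \in K[V]^G$. The forward direction is immediate: if $Gv$ is not closed, pick $v'$ in the unique closed orbit in $\overline{Gv}$, and if $v$ lies in the closure of another orbit $Gw$, take $v' = w$. For the reverse direction, Mumford's uniqueness of the closed orbit in each fiber of $V \to V/G$ shows that either $Gv$ is not closed (so $v \in V^{\sss}$) or $Gv$ is the unique closed orbit in its fiber, in which case it lies in $\overline{Gv'}$ for the distinct orbit $Gv'$, which again places $v$ in $V^{\sss}$.

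Next I would translate the hypothesis $V/G = V_1/G \times V_2/G$ into the algebraic statement $K[V]^G = K[V_1]^G \otimes_K K[V_2]^G$, which is immediate by taking global sections of the structure sheaves of affine schemes on both sides.

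With these two ingredients in hand, the argument is essentially formal. Start with $(v_1, v_2) \in V^{\sss}(G\times G)$. By Lemma \ref{lem:stab-prod}(1), we may assume without loss of generality that $v_1 \in V_1^{\sss}(G)$. Apply the characterization in the first paragraph to $v_1$ to produce $v_1' \in V_1$ with $v_1' \notin Gv_1$ and $f(v_1) = f(v_1')$ for all $f \in K[V_1]^G$. Set $v = (v_1, v_2)$ and $v' = (v_1', v_2)$. Since any $h \in K[V]^G$ is a sum of tensors $f \otimes g$ with $f \in K[V_1]^G$ and $g \in K[V_2]^G$, we get $h(v) = h(v')$. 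On the other hand, if $g \cdot v = v'$ under the diagonal $G$-action, then $g v_1 = v_1'$, contradicting $v_1' \notin Gv_1$; so $v$ and $v'$ lie in distinct diagonal $G$-orbits. Applying the characterization in the other direction now gives $v \in V^{\sss}(G)$, as desired.

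I do not expect any serious obstacle: once one has the GIT characterization of strict semistability and the tensor-product decomposition of the invariant ring, the proof is a direct verification. The one subtle point to be careful about is which of $v$ and $v'$ gets certified as strictly semistable — the characterization makes this symmetric, so it applies to $v$ itself and no further work is required.
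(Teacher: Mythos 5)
Your proof is correct and follows essentially the same route as the paper: both reduce via Lemma \ref{lem:stab-prod}(1) to the case $v_1 \in V_1^{\sss}(G)$, pick a witness $v_1'\notin Gv_1$ with the same invariants, and use $K[V]^G = K[V_1]^G \otimes_K K[V_2]^G$ to show $(v_1',v_2)$ lies in the fiber of $(v_1,v_2)$ but not in its diagonal $G$-orbit. The characterization of strict semistability via non-saturation that you spell out is the same one the paper uses implicitly.
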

\begin{proof}
  Suppose that $v= (v_1, v_2)$ is  a $(G \times G)$-strictly semistable point. By
  Lemma \ref{lem:stab-prod} we may assume without loss of generality that
  $v_1 \in V_1^{\sss}(G)$; so $v_1$ is not saturated with respect to the quotient map $V_1 \to V_1/G$. In other words there is a point $v'_1 \notin Gv_1$
  such that $f(v'_1) = f(v_1)$ for all $f \in K[V_1]^G$.

  We claim that $(v_1',v_2)$ is in the $G$-saturation of $(v_1,v_2)$, i.e.~$h(v_1',v_2) = h(v_1, v_2)$ for all $h \in K[V]^G$. To see this note
  that our assumption implies that $K[V]^G = K[V_1]^G \otimes K[V_2]^G$
  so $h \in K[V]^G$ can be expressed as $h = \sum a_i b_i$ where
  $a_i \in K[V_1]^G$ and $b_j\in K[V_2]^G$. Then $h(v_1',v_2) = \sum a_i(v'_1) b_i(v_2)
  = \sum a_i(v_1) b_i(v_2)=h(v_1, v_2)$ as claimed.

  Given the claim it follows that $(v_1, v_2)$ is not strictly semi-stable
  since $(v'_1, v_2)$ is not in the $G$-orbit of $(v_1, v_2)$.
\end{proof}

\begin{proof}[Proof of  Proposition \ref{prop:factor-cofree}]
Note that a representation $V$ of a group $G$ is cofree if and only
if $V/G$ is smooth and the quotient map $\pi\colon V \to V/G$ is flat.

If $V/G = V_1/G \times V_2/G$ then $V/G$ is smooth if and only 
$V_1/G$ and $V_2/G$ are smooth. By hypothesis the
quotient map $\pi$ factors as $\pi = \pi_1 \times \pi_2$
where $\pi_1 \colon V_1 \to V_1/G$ and $\pi_2 \colon V_2 \to V_2/G$
are corresponding quotient maps. Hence $\pi$ is flat if
and only $\pi_1$ and $\pi_2$ are flat. It follows that $V$ is cofree if and only if $V_1$ and $V_2$ are cofree.

If $G=T$ is a torus then by Lemmas \ref{lem:stab-prod} and \ref{lem:diag=prod-sss}, we know that
$V^{\sss}(T) = (V_1^{\sss} \times V_2) \cup (V_1 \times V_2^{\sss})$ 
so we see that $V^{\sss}$ is a union of divisors if and only if
$V_1^{\sss}$ and $V_2^{\sss}$ are. Hence $V^{\sss}$ is pure of codimension-one if and only if
$V_1^{\sss}$ and $V_2^{\sss}$ are pure of codimension-one.

Now if $D  = D_1 \times V_2$ is a divisor in $V^{\sss}$ then $D_1$ is a divisor
in $V_1^{\sss}$ and $\pi(D)=\pi_1(D_1) \times V_2/G$. Hence $\pi(D)$
is a Cartier divisor if and only if $\pi_1(D_1)$ is Cartier. A similar
statement holds for divisors in $V^{\sss}$ of the form $V_1 \times D_2$.
Therefore $V$ is coprincipal if and only if $V_1$ and $V_2$ are.
\end{proof}

We now come to the key proposition required to prove the Theorem \ref{part:general-facts}.\ref{thm:conj-for-tori}.
\begin{proposition} \label{prop:1D-factor}
  Let $V$ be a coprincipal representation of a torus $T$. Then there are $T$-representations $V_i$ such that $V=V_1\oplus V_2$ as $T$-representations, $V/T = V_1/T \times V_2/T$, and $V_1/T$ is one-dimensional.
\end{proposition}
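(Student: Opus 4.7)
Plan. Since $V$ is cnpure, we may choose an irreducible component of $V^{\sss}$ whose image in $V/T$ is a principal Cartier divisor. Fixing a weight-vector basis $x_1, \ldots, x_n$ of $V$ with weights $\chi_1, \ldots, \chi_n$, after relabeling we may assume this component is $\{x_1 = 0\}$, with image $V(f)$ for some $T$-invariant monomial $f = x^b = \prod_j x_j^{b_j}$ satisfying $b_1 > 0$ and $\sum_j b_j \chi_j = 0$. Set $S = \{j : b_j > 0\}$ and define the $T$-subrepresentations $V_1 = \bigoplus_{j \in S} K x_j$ and $V_2 = \bigoplus_{j \notin S} K x_j$, so that $V = V_1 \oplus V_2$.

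Writing $M = \{a \in \mathbb{N}^n : \sum_j a_j \chi_j = 0\}$, so that $K[V]^T = K[M]$, the conclusions $V/T = V_1/T \times V_2/T$ with $\dim V_1/T = 1$ are equivalent to the monoid decomposition $M = \mathbb{N}\cdot b \,\oplus\, (M \cap \mathbb{N}^{S^c})$. The ideal $I_1 \subseteq K[M]$ cutting out $\pi(\{x_1 = 0\})$ equals $\{x^a \in K[M] : a_1 > 0\}$, and by cnpure it is the principal ideal $(f)$. Hence any $a \in M$ with $a_1 > 0$ satisfies $a \geq b$ componentwise, and iterated subtraction of $b$ produces $a = kb + r$ with $k \in \mathbb{N}$ and $r \in M$ having $r_1 = 0$. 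It therefore remains to show that any such $r$ is supported on $S^c$, equivalently, that no non-zero $r \in M$ has $r_1 = 0$ and $r_j > 0$ for some $j \in S \setminus \{1\}$.

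The crux is the sub-claim: \emph{for every $j \in S \setminus \{1\}$, the divisor $\{x_j = 0\}$ is itself an irreducible component of $V^{\sss}$.} Granting this, cnpure furnishes a principal generator $x^{b^{(j)}}$ of $I_j$. Since $b_j > 0$ gives $x^b \in I_j$, we have $b \geq b^{(j)}$ and hence $b - b^{(j)} \in M$. If $b^{(j)}_1 = 0$, then $(b - b^{(j)})_1 = b_1 > 0$, so principality of $I_1$ forces $b - b^{(j)} \geq b$, which contradicts $b^{(j)}_j > 0$. Thus $b^{(j)}_1 > 0$, so $x^{b^{(j)}} \in I_1 = (f)$ gives $b^{(j)} \geq b$, and combined with $b \geq b^{(j)}$ this forces $b^{(j)} = b$, i.e.~$I_j = (f)$. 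Any $r \in M$ with $r_j > 0$ therefore satisfies $r \geq b$, so $r_1 \geq b_1 > 0$, ruling out the bad case.

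Proving the sub-claim is the main obstacle. Let $\lambda_0$ be a one-parameter subgroup (not in the kernel of the action) witnessing $\{x_1 = 0\} \subset V^{\sss}$, so $\langle \lambda_0, \chi_k \rangle \geq 0$ for all $k \neq 1$. Stability of $V$ forces $\langle \lambda_0, \chi_1 \rangle < 0$, since otherwise all weights of $\lambda_0$ would be non-negative and $\lambda_0$ would obstruct closedness of the generic orbit. To obtain a witness for $\{x_j = 0\} \subset V^{\sss}$, apply Farkas' lemma to the cone $C' = \{\nu : \langle \nu, \chi_1 \rangle > 0 \text{ and } \langle \nu, \chi_k \rangle \geq 0 \text{ for all } k \notin \{1,j\}\}$: if $C'$ were empty then $-\chi_1$ would lie in the rational cone generated by $\{\chi_k : k \notin \{1,j\}\}$, and after clearing denominators this would produce $a' \in M$ with $a'_1 > 0$ and $a'_j = 0$, contradicting $a' \geq b$ and $b_j > 0$. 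Picking $\nu \in C'$ and $\alpha \gg 0$, the one-parameter subgroup $\mu = \lambda_0 + \alpha \nu$ satisfies $\langle \mu, \chi_k \rangle \geq 0$ for all $k \neq j$ and $\langle \mu, \chi_1 \rangle > 0$, so $\mu$ is a witness for $\{x_j = 0\} \subset V^{\sss}$. Once the sub-claim is in hand, the remainder is routine monoid bookkeeping.
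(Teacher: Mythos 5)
Your proof is correct, and it follows the same overall strategy as the paper's: pick a divisorial component $V(x_1)$ of $V^{\sss}$, use the cnpure hypothesis to write the contracted ideal $(x_1)\cap K[V]^T$ as $(x^b)$ for a single invariant monomial, split $V$ according to the support $S$ of $b$, and then show that the variables indexed by $S$ occur in no invariant monomial outside $\NN\cdot b$. The difference lies entirely in how this last claim is established. The paper argues by contradiction using a minimal monomial generating set $f_1,\dots,f_r$: if some $x_j$ with $j\in S$ divided a second generator, the image of $V(x_j)$ would sit inside a codimension-two locus, while a dichotomy (either $V(x_j)\subseteq V^{\sss}$, so npure applies, or $V(x_j)\cap V^s$ is dense and saturated) forces that image to be a divisor. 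You instead prove the stronger structural fact that every $V(x_j)$ with $j\in S$ is itself a component of $V^{\sss}$ --- by perturbing, via Farkas' lemma, the one-parameter subgroup witnessing $V(x_1)\subseteq V^{\sss}$ --- and then apply the cnpure hypothesis a second time to get $(x_j)\cap K[V]^T=(x^{b^{(j)}})$, after which a short divisibility computation in the weight monoid forces $b^{(j)}=b$. Your route consumes the full strength of cnpure at every $j\in S$ rather than only npure, but in exchange it is entirely elementary (convex geometry plus monoid arithmetic), it sidesteps the paper's algebraic-independence and dimension-count step, and it yields the clean byproduct that all the contracted ideals $(x_j)\cap K[V]^T$ for $j\in S$ coincide. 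The only points left implicit --- that a principal monomial ideal of $K[V]^T$ is generated by a monomial, and that an irreducible divisor contained in the pure codimension-one set $V^{\sss}$ is automatically a component of it --- are immediate and are treated with the same brevity in the paper.
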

\begin{proof}
  Let $x_1,\ldots,x_n$ be coordinates on $V$ diagonalizing the $T$-action.
 Any invariant $f \in K[x_1, \ldots, x_n]^T$ is necessarily
 a sum of invariant monomials, i.e.~$K[V]^T$ is generated by monomials.
 Let $f_1, \ldots , f_r$ be a minimal set of monomials that generate $K[V]^T$.
 If $r=1$ then the statement is trivial so we assume that $r\geq 2$.

 Since $T$ acts diagonally, $V^{\sss}$ is the union of linear subspaces.
 By purity, there is a divisorial component of $V^{\sss}$, which after
 reordering coordinates, we can assume is $V(x_1)$. Since $V(x_1)
 \subset V^{\sss}$ there is a non-trivial invariant function vanishing
 on $V(x_1)$. Since such a function is a polynomial in the monomials
 $f_1, \ldots , f_r$, we must have that $x_1|f_i$ for some
 $i$. After
 reordering we may assume that $x_1|f_1$.

 By assumption the image of $V(x_1)$ is Cartier. Since
 $V/T$ is an affine toric variety, $\Pic(V/T) = 0$ so the ideal
 $I = (x_1) \cap K[V]^T$ defining $\pi(V(x_1))$
 is principal. We claim that minimality of $f_1, \ldots , f_r$
 implies that $I = (f_1)$ and $x_1 \nmid f_i$ for $i \neq 1$.

 To prove the claim we argue as follows. Let $p = f_1^{a_1} \ldots
 f_r^{a_r}$ be a monomial generator of $I$. Since $f_1 \in I$
 we can write $f_1 = q f_1^{a_1} \ldots f_r^{a_r}$. Since
 this equation also holds in the polynomial ring $K[V]$ we conclude
 that either $q=1$ and $p=f_1$ or that $f_1$ can be expressed as a monomial
 in $f_2, \ldots , f_r$ which contradicts the minimality of $f_1, \ldots , f_r$.

 We now claim that
 if $x_i |f_1$ then $x_i \nmid f_k$
  for $i \neq 1$.
  To see this suppose that $x_2 | f_1$ and $x_2 | f_2$. Then the image
  of $V(x_2)$ is contained in the subvariety of $V/T = \spec K[f_1, f_2, \ldots , f_r]$ defined by the ideal $(f_1, f_2)$. Note that $f_1, f_2$
  are necessarily algebraically independent in $\spec K[f_1, f_2, \ldots, f_r]$
  because $f_1$ is the only generator divisible by $x_1$. Hence it follows
  that the image of $V(x_2)$ is not a divisor.

On the other hand, we will show $V(x_2) \subset V^{\sss}$ so by assumption on the representation $V$, we know that the image of $V(x_2)$ is a divisor. This will lead to a contradiction. If $V(x_2)$ is not in $V^{\sss}$ then $V(x_2)$ has dense intersection
  with the open set of stable points $V^s$.
  The quotient map $\pi_s \colon V^s \to V^s/G$ has constant dimensional fibers which are orbits. In particular, any $T$-invariant subvariety of $V^s$ is saturated, so the image of the $T$-invariant divisor $V(x_2) \cap V^s$ in $V^s/T$ would have codimension one.

 Given the claim we can, after reordering the coordinates, assume
 that $x_1, \ldots, x_s | f_1$ and $x_j \nmid f_1$ if $j > s$ and $x_i \nmid
 f_k$ if $i \leq s$ and $k \neq 1$. (Note that we must have $s < n$
 since $K[V]^T$ is generated by at least two invariants.)
Hence the invariant ring
 $K[V]^T$ is generated by $f_1 = x_1^{a_1} \ldots x_s^{a_s}$ with $a_i
 > 0$ and monomials $f_2,\ldots , f_r$ in the variables $x_{s+1},
 \ldots , x_n$. So we can split $V = V_1 \oplus V_2$ where $V_1$ is
 the subspace spanned by the coordinates $x_1, \ldots , x_s$ and $V_2$
 is the subspace spanned by the coordinates $x_{s+1}, \ldots ,
 x_n$. The invariant ring $K[V_1]^T$ consists of those elements of
 $K[V]^T$ that only involve $x_1,\dots,x_s$. Since these variables do
 not divide $f_2,\dots,f_r$, we know $K[V_1]^T$ is generated by
 $f_1$. Likewise, any $T$-invariant monomial in $x_{s+1}, \ldots ,
 x_n$ is a product of $f_2, \ldots , f_r$ so $K[V_2]^T$ is generated
 by $f_2, \ldots , f_r$. Since $f_1$ is algebraically independent from
 $f_2,\dots,f_r$ we have $K[V]^T=K[f_1][f_2,\dots,f_r]=K[V_1]^T\otimes
 K[V_2]^T$, i.e.~$V/T=V_1/T\times V_2/T$.
    \end{proof}

\begin{proof}[Proof of Theorem \ref{part:general-facts}.\ref{thm:conj-for-tori}]
  The theorem follows by induction on the dimension of $V/G$ and Propositions
  \ref{prop:factor-cofree} and \ref{prop:1D-factor}.
\end{proof}

\section{Further results and examples for torus actions}
\label{sec:more-torus-examples}

In this section, we give examples to illustrate how coregular, pure, and coprincipal differ.

\subsection{Example to show coregular does not imply pure}
Not surprisingly, there are stable coregular representations of tori which are not
pure. Here is a simple example.
\begin{example} \label{ex:coreg-notpure}
  Let $T= \GG_m$ act on a 3-dimensional vector space $V$  with weights $(1,-1,0)$. If we identify $K[V] = K[x,y,z]$ then $K[V]^G = K[xy,z]$ is regular, so
  $V$ is coregular. However, $V^{\sss}$ is the union of two codimension-two subspaces $V(x,z)$ and $V(y,z)$.
\end{example}

\subsection{Example to show that $V^{\sss}$ being pure of codimension-one does not imply pure}

\begin{example}\label{ex.purenotnupure}
Consider the $\GG_m^2$-action on $\AA^5$ with weights
\[
x=(2,0),\quad
y=(0,1),\quad
z=(-2,-1),\quad
u_1=(-1,0),\quad
u_2=(-1,0)
\]
This representation is stable and $V^{\sss}=V(x)\cup V(y)\cup V(z)$. One checks that
\[
K[x,y,z,u_1,u_2]^{\GG_m^2}=K[xyz, xu_1^2, xu_2^2, xu_1u_2].
\]
So, the quotient is $\AA^1$ times an $A_1$-singularity, %This representation is pure since $x,y,z$ are \matt{never defined either} isolatable weights, and for every hyperplane $H$ in the weight space, the positive part $H_+$ contains $x$, $y$, or $z$. Alternatively, we see $V^{\sss}=V(x)\cup V(y)\cup V(z)$ by analyzing the stabilizers.
%Notice that this is co-orbifold\matt{also not defined} although it is not smooth. 
hence it is not smooth but has finite quotient singularities. We see that $V$ is not pure as $V(x)$ maps to a point under the quotient map. 
%Alternatively, it will follow from Proposition \ref{prop.qcnice} that our representation is not pure.% Alternatively, one sees directly that the representation is not pure as $V(x)$ maps to the origin under the quotient map.
%In terms of the configuration of weights, we can see this since there is a hyperplane $H$ isolating $x$ and such that $H$ contains no weights. Thus, $x$ must be involved in every invariant since each invariant involves a positive and a negative weight for $H$.
\end{example}

\subsection{Example to show that pure does not imply coprincipal}
\begin{example}
\label{ex:counter-example}
Consider the action of $\GG_m^3$ on $\AA^6$ with weights
\[
u_1 = (0,1,0),\quad
u_2 = (1,-1,0),\quad
u_3 = (1,0,0),\quad
u_4 = (-1,0,0),\quad
y_1 = (0,0,1),\quad
y_2 = (-1,0,-1)
\]
We calculate the invariants. Let $H=(0,0,1)^\perp$ be a hyperplane in the character lattice tensored with $\RR$. Note that the $u_i\in H$, and that $y_1$ and $y_2$ are on opposite sides of $H$. %, and that $a_1(0,0,1)+a_2(-1,0,-1)\in H$ if and only if $a_1=a_2$. 
As a result, every monomial invariant $y_1^{a_1}y_2^{a_2}\prod_i u_i^{b_i}$ must have $a_1=a_2$. Hence,
\[
K[u_1,u_2,u_3,u_4,y_1,y_2]^{\GG_m^3}=K[u_1,u_2,u_3,u_4,y_1y_2]^{\GG_m^2},
\]
where $\GG_m^2$ is the subtorus $\GG_m^2\times 1\subset\GG_m^3$. Said another way, $\AA^6/\GG_m^3\simeq\AA^5/\GG_m^2$, where $\GG_m^2$ acts on $\AA^5$ with weights
\[
u'_1 = (0,1),\quad
u'_2 = (1,-1),\quad
u'_3 = (1,0),\quad
u'_4 = w = (-1,0)
\]
Now notice that the weights $u'_3$ and $u'_4=w$ are contained on the line $L=(0,1)^\perp$, and that $u'_1$ and $u'_2$ live on opposite sides of $L$. So by the same reasoning as above,
\[
K[u_1,u_2,u_3,u_4,y_1y_2]^{\GG_m^2}=K[u_1u_2,u_3,u_4,y_1y_2]^{\GG_m},
\]
or said another another way, $\AA^5/\GG_m^2\simeq\AA^4/\GG_m$ where $\GG_m$ acts on $\AA^4$ with weights $1,1,-1,-1$. This quotient is the non-simplicial toric variety given by the cone over the quadratic surface. We have therefore shown
\[
K[u_1,u_2,u_3,u_4,y_1,y_2]^{\GG_m^3}=K[u_1u_2y_1y_2, u_1u_2u_4, u_3y_1y_2, u_3u_4].
\]
One checks that
\[
V^{\sss}=V(u_1)\cup V(u_2)\cup V(y_1)\cup V(y_2)
\]
and that each of these components maps to a divisor, so $V$ is pure. However, $V$ is not coprincipal since all of these components map to Weil divisors which are not Cartier, e.g.~$V(u_1)\subset\AA^6$ maps to $V(u_1u_2y_1y_2, u_1u_2u_4)\subset\AA^6/
\GG_m^3$ which is the divisor $a=b=0$ in the quotient $\spec K[a,b,c,d]/(ad-bc)$.
\end{example}

Note that by contrast if $V$ is an pure representation of a connected reductive group $G$ such that $\dim V/ G = 2$, then it follows from Kempf \cite{Kem:80} (cf.~\cite[Theorem 8.6]{PoVi:94}) that $V$ is cofree and hence coprincipal if Question \ref{q:nicely-pure} \ref{conj:nicely-pure::cofree->pure} has an affirmative answer.

\subsection{Co-orbifold and pure implies coprincipal}
The pure representation of Example
\ref{ex:counter-example} is not coprincipal but has worse than finite quotient singularities.
The following proposition shows that this is not an isolated phenomenom.
\begin{proposition}\label{prop.qcnice}
  If $V$ is an pure representation of a torus $T$ for which $V/G$
  is singular, then $V/G$ has worse than finite quotient singularities.
  \end{proposition}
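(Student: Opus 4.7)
The plan is to establish the contrapositive: if $V$ is a stable npure torus representation and $V/T$ has finite quotient singularities, then $V/T$ is smooth. The approach is purely via the toric dictionary for $V/T$. After reducing to the faithful case (replacing $T$ by $T/K$ where $K$ is the kernel of the action, which preserves $V/T$ and all relevant hypotheses), write $M$ for the character lattice of $T$, let $w_1,\dots,w_n \in M$ be the $T$-weights on $V = \AA^n$, and let $T_V = \GG_m^n$. Set $M' = \ker(\ZZ^n \to M)$ and $N' = \ZZ^n/M^*$, the character and cocharacter lattices of the quotient torus $T' = T_V/T$. Then $V/T$ is the affine toric variety attached to the cone $\sigma \subset N'_{\RR}$ dual to $\sigma^\vee = \RR_{\geq 0}^n \cap M'_{\RR}$. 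Write $\bar{e}_j \in N'$ for the image of the $j$-th standard basis vector of $\ZZ^n$; since $\ZZ^n \to N'$ is surjective, one has $N' = \ZZ\langle \bar{e}_1, \ldots, \bar{e}_n\rangle$. Standard toric facts give: $V/T$ has finite quotient singularities iff $\sigma$ is simplicial, and $V/T$ is smooth iff the primitive ray generators of $\sigma$ form a $\ZZ$-basis of $N'$.

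The key claim is that every $\bar{e}_j$ lies on a ray of $\sigma$. Granting this, each $\bar{e}_j$ is a positive integer multiple of some primitive ray generator $v_l$, so the $v_l$'s $\ZZ$-span $N'$. Combined with simpliciality, which provides $\QQ$-linear independence of the $v_l$'s and makes their number equal to $\dim N'$, the $v_l$'s form a $\ZZ$-basis of $N'$, whence $V/T$ is smooth.

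To prove the key claim, first I would verify $\bar{e}_j \neq 0$ for every $j$: if $\bar{e}_j = 0$, then $e_j$ lifts to a cocharacter $\lambda \colon \GG_m \to T$ acting on $V$ with weight $1$ on $x_j$ and weight $0$ on every other coordinate, so $\lim_{t\to 0}\lambda(t) v \in V(x_j)$ for all $v \in V$. For any stable $v$ the orbit $Tv$ is closed, forcing $v \in V(x_j)$, which contradicts that $V^s$ is open dense in $V$ while $V(x_j)$ is a proper closed subset. Next, by the orbit-cone correspondence for the toric morphism $\pi \colon V \to V/T$, the image $\pi(V(x_j))$ is the closure of the $T'$-orbit indexed by the smallest face of $\sigma$ containing $\bar{e}_j$; in particular, $\pi(V(x_j))$ is a divisor iff $\bar{e}_j$ lies on a ray of $\sigma$. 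It thus suffices to show that $\pi(V(x_j))$ is always a divisor, which splits into two cases. If $V(x_j) \cap V^s \neq \emptyset$, then $V(x_j) \cap V^s$ is open dense in $V(x_j)$ and $\pi|_{V^s}$ has fibers of dimension $\dim V - \dim V/T$, so $\pi(V(x_j) \cap V^s)$ has codimension one, forcing $\pi(V(x_j))$ itself to be a divisor. Otherwise $V(x_j) \subseteq V^{\sss}$, so $V(x_j)$ is a divisorial component of $V^{\sss}$, and npurity directly gives that $\pi(V(x_j))$ is a divisor.

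I expect the main delicate point to be a careful handling of the toric dictionary, especially the identification $N' = \ZZ^n/M^*$ and the orbit-cone correspondence for the toric morphism $\pi$; once these are established, the combinatorial heart of the argument (the surjectivity of $\ZZ^n \to N'$ combined with npurity forcing every $\bar{e}_j$ onto a ray) is short. In contrast to the cnpure case of Theorem \ref{part:general-facts}.\ref{thm:conj-for-tori}, no induction on $\dim V$ and no one-dimensional factor extraction as in Proposition \ref{prop:1D-factor} is required.
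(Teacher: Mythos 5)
Your argument is correct, but it takes a genuinely different route from the paper's. The paper proves the contrapositive by showing that npure plus finite quotient singularities implies cnpure: finite quotient singularities make every Weil divisor on $V/T$ $\QQ$-Cartier, and a separate lemma (proved with a minimal set of monomial generators of $K[V]^T$ and the UFD property of $K[V]$, in the style of Proposition \ref{prop:1D-factor}) shows that the image of a divisorial component of $V^{\sss}$ is $\QQ$-Cartier only if it is Cartier; smoothness then comes from invoking Theorem \ref{thm:conj-for-tori} (cnpure $\Rightarrow$ cofree $\Rightarrow$ $V/T$ smooth), so the paper's proof leans on the whole inductive factorization machinery of Propositions \ref{prop:factor-cofree} and \ref{prop:1D-factor}. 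You instead argue entirely on the fan side: $\sigma$ is the image of $\RR_{\geq 0}^n$ in $N'_{\RR}$, hence generated by the $\bar e_j$, which generate $N'$ as a group; npurity (together with your fiber-dimension argument on $V^s$ when $V(x_j)\cap V^s\neq\emptyset$, which correctly rules out $\bar e_j$ lying in the interior of a higher-dimensional face) forces every $\bar e_j$ onto a ray, and simpliciality upgrades a $\ZZ$-spanning set of primitive ray generators to a $\ZZ$-basis. This bypasses Theorem \ref{thm:conj-for-tori} entirely and is shorter once the toric dictionary is in place; what the paper's route buys is the intermediate statement that such a $V$ is in fact cnpure (hence cofree), and it reuses results already established. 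The one point you should make explicit in a write-up is that stability is what guarantees $\ZZ\bigl(\ZZ_{\geq 0}^n\cap M'\bigr)=M'$ (equivalently that $\sigma^\vee$ is full-dimensional in $M'_{\RR}$), so that $V/T$ really is the toric variety of $\sigma$ with respect to the full lattice $N'$ and $\dim\sigma=\operatorname{rank} N'$; without this the count of rays in the simplicial case would not match the rank of the lattice.
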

\begin{proof}
  We will show that if $V$ is pure and the image has finite quotient
  singularities then it is in fact coprincipal and hence cofree by Theorem \ref{thm:conj-for-tori}.

  If $V/G$ has finite quotient singularities then any divisor on $V/G$ is $\QQ$-Cartier and the proposition follows from the following lemma.
  \end{proof}
  \begin{lemma} Let $V$ be a stable representation of a torus $T$
    and let $\pi \colon V \to V/T$ be the quotient map. 
    Let $Z$ be a divisorial component of $V^{\sss}$. Then
    the effective Weil divisor $[\pi(Z)]$ is $\QQ$-Cartier if and only
    it is Cartier.
  \end{lemma}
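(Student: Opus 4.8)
The plan is to pass to the toric setting and reinterpret the statement as the vanishing of a torsion class in a class group. Since $T$ is a torus, $K[V]^T$ is the semigroup algebra of the saturated affine semigroup of invariant monomials, so $Y:=V/T$ is a normal affine toric variety; write $Y=U_\tau$ for a strongly convex rational cone $\tau$ in the cocharacter lattice $N$ of the quotient torus (with dual character lattice $M$), and let $p\colon N_n\to N$ be the quotient of cocharacter lattices induced by $T\hookrightarrow T_n$, where $T_n$ is the diagonal torus of $V\cong\AA^n$. Recall from \cite{KKV:89} that $\Pic(Y)=0$, so on $Y$ a Weil divisor is Cartier iff it is principal, i.e.\ iff its class in $\Cl(Y)$ is trivial, and it is $\QQ$-Cartier iff its class is torsion. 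Thus, writing $D:=[\pi(Z)]$, the assertion is exactly that $D$ is never a nonzero torsion element of $\Cl(Y)$.

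First I would record what $D$ is. Diagonalizing the action, $Z$ is a coordinate hyperplane, say $Z=V(x_1)$, and $\pi(V(x_1))$ is a torus-invariant prime divisor, so $D=D_\rho$ for the ray $\rho=\RR_{\ge0}\,p(e_1)$ of $\tau$ (the image is a divisor precisely because $\rho$ is a genuine ray). The key structural input is a combinatorial reading of the hypothesis $Z\subseteq V^{\sss}$: by the Hilbert--Mumford analysis used in Lemma \ref{lem:stab-prod}, a generic point of $V(x_1)$ fails to be stable exactly when there is a one-parameter subgroup $\lambda$ with $\langle\chi_1,\lambda\rangle<0\le\langle\chi_k,\lambda\rangle$ for $k\ge2$. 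Translating through $p$ (whose kernel is the cocharacters of $T$), this says
\[
p(e_1)\in\operatorname{Cone}\bigl(p(e_2),\dots,p(e_n)\bigr).
\]
Since $\rho$ is an extremal ray, this forces at least one further coordinate generator $p(e_{k_0})$ with $k_0\neq1$ to lie on $\rho$, and produces a positive relation among the generators lying on $\rho$.

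Next I would make Cartierness explicit. Because the $p(e_k)$ generate $N$, the divisor $D_\rho$ is Cartier iff the support functional $m_\rho\in M_\QQ$ (the unique functional with $\langle m_\rho,u_\rho\rangle=1$ and $\langle m_\rho,u_{\rho'}\rangle=0$ for the other primitive ray generators $u_{\rho'}$) is integral. One checks that $m_\rho$ is automatically integral on every extremal generator $p(e_k)$, since $p(e_k)=c_k u_{\rho(k)}$ with $c_k\in\ZZ$; hence the only possible obstruction is the integrality of the $\rho$-barycentric coordinate of the non-extremal generators. Equivalently, in the language of Proposition \ref{prop:1D-factor}, $D_\rho$ is Cartier iff the monomial prime ideal $(x_1)\cap K[V]^T$ is principal, while $\QQ$-Cartier amounts to the torsion condition $u_\rho\notin\operatorname{span}_\QQ(u_{\rho'}:\rho'\neq\rho)$.

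The main obstacle is this last integrality step, and it is genuinely delicate: a prime toric divisor on an affine toric variety can be $\QQ$-Cartier without being Cartier (for instance a ruling of the quadric cone $\spec K[a,b,c,d]/(ad-bc)$ is $\QQ$-Cartier of order two but not Cartier), so the conclusion cannot follow from the ray being a ray alone—it must exploit that $\rho$ arises from a divisorial component of $V^{\sss}$. The goal would be to combine the positive relation expressing $p(e_1)$ through the other generators on $\rho$ with the torsion hypothesis to force the lattice splitting $N=\ZZ u_\rho\oplus\bigl(N\cap\operatorname{span}(u_{\rho'})\bigr)$, which is equivalent to integrality of $m_\rho$ and hence to Cartierness. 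I expect this to be the crux, and the place where the semistability hypothesis must be used in full; in the situation of Proposition \ref{prop.qcnice} one has in addition that every component of $V^{\sss}$ maps to a divisor and that $Y$ has finite quotient singularities, and I would verify that these global inputs are exactly what precludes a nonzero torsion class for $D_\rho$.
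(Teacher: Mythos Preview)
Your proposal is not a proof: you set up a toric-geometric framework, correctly translate the hypotheses, and then explicitly stop at the crux, writing that ``the main obstacle is this last integrality step, and it is genuinely delicate'' and that ``the goal would be to combine'' certain ingredients. No argument for the implication $\QQ$-Cartier $\Rightarrow$ Cartier is actually given. In addition, your guiding example is wrong: a ruling of the quadric cone $\spec K[a,b,c,d]/(ad-bc)$ is \emph{not} $\QQ$-Cartier of order two---the class group of this cone is $\ZZ$, with the ruling as a generator, so its class is not torsion at all. (You may be thinking of the $A_1$ singularity $xy=z^2$, whose class group is $\ZZ/2$.)

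More importantly, you have misdiagnosed where the semistability hypothesis enters. You invest considerable effort extracting from $Z\subset V^{\sss}$ the positive relation $p(e_1)\in\operatorname{Cone}(p(e_2),\dots,p(e_n))$, and then assert that this is what should ultimately force integrality of $m_\rho$. But the paper uses the $V^{\sss}$ hypothesis only to ensure that $Z$ is a coordinate hyperplane $V(x_1)$; after that reduction, the argument is purely about the image of a coordinate hyperplane and makes no further reference to semistability. The paper's route is entirely elementary: choose a minimal set of monomial generators $f_1,\dots,f_r$ for $K[V]^T$, write the $\QQ$-Cartier witness $p$ (with $\sqrt{(p)}=I:=(x_1)\cap K[V]^T$) as a monomial in the $f_i$, and then use unique factorization in the polynomial ring $K[V]$ together with minimality of the $f_i$ to force $p$ to be a power of a single generator $f_1$ with $x_1\mid f_1$ and $x_1\nmid f_i$ for $i\neq1$; this yields $I=(f_1)$ and hence Cartierness. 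No lattice splittings or support functionals are needed.
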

  \begin{proof}
    We use an argument similar to that used in the proof of Proposition \ref{prop:1D-factor}. As above choose coordinates $x_1, \ldots , x_n$
    diagonalizing the $T$ action and let $f_1, \ldots , f_r$
    be a minimal set of monomials that generate $K[V]^T$. After reordering the coordinates we may assume that $Z = V(x_1)$. The image $\pi(Z)$
    is the subvariety of $V/T$ defined by the contracted ideal
    $I = (x_1) \cap K[V]^T$. Since the map $V \to V/T$ is toric the ideal
    $I$ is generated by monomials $(p_1, \ldots , p_s)$. Since
    $\pi(Z)$ is $\QQ$-Cartier there is a monomial $p$ such that
    $\sqrt{(p)} = I$. Write $p = f_1^{a_1} \ldots f_r^{a_r}$. Since
    $p \in (x_1)$, after possibly reordering the coordinates we know
    that $x_1 |f_1^{a_1}$ and hence $x_1 | f_1$. Thus
    $f_1 \in I = \sqrt{(p)}$ so $f_1 = q f_1^{a_1} \ldots f_r^{a_r}$.
    for some monomial $q \in K[f_1,\ldots , f_r]$. Since this equation
    also holds in the UFD $K[V]$ we conclude that $q = 1$ and $p = f_1$.
    As in the proof of proposition \ref{prop:1D-factor} this implies
    that no other $x_1 \nmid f_i$ for $i \neq 1$. Hence $I$ must
    be generated by powers of $f_1$ so $I = (f_1)$ since $f_1 \in I$.
    Therefore $\pi(Z)$ is defined by a single equation as claimed.
  \end{proof}

\bibliographystyle{../arxiv_submitted/amsmath}

\bibliography{../arxiv_submitted/refs}

\def\cprime{$'$} \def\cprime{$'$} \def\cprime{$'$}
\begin{thebibliography}{vLCL}

\bibitem[All]{All:03}
Daniel Allcock, {\em The moduli space of cubic threefolds}, J. Algebraic Geom.
  \textbf{12} (2003), no.~2, 201--223.

\bibitem[ACT]{ACT:02}
Daniel Allcock, James~A. Carlson, and Domingo Toledo, {\em The complex
  hyperbolic geometry of the moduli space of cubic surfaces}, J. Algebraic
  Geom. \textbf{11} (2002), no.~4, 659--724.

\bibitem[AHR]{AHR:20}
Jarod Alper, Jack Hall, and David Rydh, {\em A {L}una \'{e}tale slice theorem
  for algebraic stacks}, Ann. of Math. (2) \textbf{191} (2020), no.~3,
  675--738.

\bibitem[Bri]{Bri:93}
Michel Brion, {\em Sur les modules de covariants}, Ann. Sci. \'{E}cole Norm.
  Sup. (4) \textbf{26} (1993), no.~1, 1--21.

\bibitem[DK]{DaKa:85}
Jiri Dadok and Victor Kac, {\em Polar representations}, J. Algebra \textbf{92}
  (1985), no.~2, 504--524.

\bibitem[ER]{EdRy:21}
Dan Edidin and David Rydh, {\em Canonical reduction of stabilizers for {A}rtin
  stacks with good moduli spaces}, Duke Math. J. \textbf{170} (2021), no.~5,
  827--880.

\bibitem[\'E1]{ElashReducible:72}
A.~G. \'Elashvili, {\em Canonical form and stationary subalgebras of points of
  general position for simple linear lie groups}, Funct. Anal. Appl. \textbf{6}
  (1972), no.~1, 44--53.

\bibitem[\'E2]{Elash:72}
\bysame, {\em Stationary subalgebras of points of the common state for
  irreducible linear lie groups}, Funct. Anal. Appl. \textbf{6} (1972), no.~2,
  139--148.

\bibitem[FH]{FuHa:91}
William Fulton and Joe Harris, {\em Representation theory}, Graduate Texts in
  Mathematics, vol. 129, Springer-Verlag, New York, 1991, A first course,
  Readings in Mathematics.

\bibitem[GGS]{GoGuSt:17}
Daniel Goldstein, Robert Guralnick, and Richard Stong, {\em A lower bound for
  the dimension of a highest weight module}, Represent. Theory \textbf{21}
  (2017), 611--625.

\bibitem[KPV]{KPV:76}
Victor~G. Kac, Vladimir~L. Popov, and Ernest~B. Vinberg, {\em Sur les groupes
  lin\'{e}aires alg\'{e}briques dont l'alg\`ebre des invariants est libre}, C.
  R. Acad. Sci. Paris S\'{e}r. A-B \textbf{283} (1976), no.~12, Ai, A875--A878.

\bibitem[Kem]{Kem:80}
George Kempf, {\em Some quotient surfaces are smooth}, Michigan Math. J.
  \textbf{27} (1980), no.~3, 295--299.

\bibitem[KKV]{KKV:89}
Friedrich Knop, Hanspeter Kraft, and Thierry Vust, {\em The {P}icard group of a
  {$G$}-variety}, Algebraische {T}ransformationsgruppen und
  {I}nvariantentheorie, DMV Sem., vol.~13, Birkh\"{a}user, Basel, 1989,
  pp.~77--87.

\bibitem[Laz]{Laz:09}
Radu Laza, {\em The moduli space of cubic fourfolds}, J. Algebraic Geom.
  \textbf{18} (2009), no.~3, 511--545.

\bibitem[Lun]{Lun:73}
Domingo Luna, {\em Slices \'etales}, Sur les groupes alg\'ebriques, Soc. Math.
  France, Paris, 1973, pp.~81--105. Bull. Soc. Math. France, Paris, M\'emoire
  33.

\bibitem[MM]{Mat:60}
Yoz\^{o} Matsushima and Akihiko Morimoto, {\em Sur certains espaces fibr\'{e}s
  holomorphes sur une vari\'{e}t\'{e} de {S}tein}, Bull. Soc. Math. France
  \textbf{88} (1960), 137--155.

\bibitem[MFK]{MFK:94}
D.~Mumford, J.~Fogarty, and F.~Kirwan, {\em Geometric invariant theory}, third
  ed., Springer-Verlag, Berlin, 1994.

\bibitem[PV]{PoVi:94}
V.~L. Popov and E.~B. Vinberg, {\em Invariant theory}, Algebraic Geometry IV,
  Encyclopedaedia of Mathematical Sciences, Springer-Verlag, 1994,
  pp.~123--278.

\bibitem[vLCL]{LiE:92}
Marc A.~A. van Leeuwen, Arjeh~M. Cohen, and Bert Lisser, {\em {LiE}, a package
  for lie group computations}, Computer Algebra Nederland, Amsterdam, 1992.

\bibitem[Weh]{Weh:92}
David~L. Wehlau, {\em A proof of the {P}opov conjecture for tori}, Proc. Amer.
  Math. Soc. \textbf{114} (1992), no.~3, 839--845.

\end{thebibliography}


\begin{thebibliography}{vLCL}

\bibitem[AG]{AdGo:79}
O.~M. Adamovich and E.~O. Golovina, {\em Simple linear {L}ie groups having a
  free algebra of invariants}, Selecta Math. Soviet. \textbf{3} (1983/84),
  no.~2, 183--220, Selected reprints.

\bibitem[All]{All:03}
Daniel Allcock, {\em The moduli space of cubic threefolds}, J. Algebraic Geom.
  \textbf{12} (2003), no.~2, 201--223.

\bibitem[ACT]{ACT:02}
Daniel Allcock, James~A. Carlson, and Domingo Toledo, {\em The complex
  hyperbolic geometry of the moduli space of cubic surfaces}, J. Algebraic
  Geom. \textbf{11} (2002), no.~4, 659--724.

\bibitem[BS1]{BhaSha:15}
Manjul Bhargava and Arul Shankar, {\em Binary quartic forms having bounded
  invariants, and the boundedness of the average rank of elliptic curves}, Ann.
  of Math. (2) \textbf{181} (2015), no.~1, 191--242.

\bibitem[BS2]{BhaSha2:15}
\bysame, {\em Ternary cubic forms having bounded invariants, and the existence
  of a positive proportion of elliptic curves having rank 0}, Ann. of Math. (2)
  \textbf{181} (2015), no.~2, 587--621.

\bibitem[Bri]{Bri:93}
Michel Brion, {\em Sur les modules de covariants}, Ann. Sci. \'{E}cole Norm.
  Sup. (4) \textbf{26} (1993), no.~1, 1--21.

\bibitem[DK]{DaKa:85}
Jiri Dadok and Victor Kac, {\em Polar representations}, J. Algebra \textbf{92}
  (1985), no.~2, 504--524.

\bibitem[\'E1]{ElashReducible:72}
A.~G. \'Elashvili, {\em Canonical form and stationary subalgebras of points of
  general position for simple linear lie groups}, Funct. Anal. Appl. \textbf{6}
  (1972), no.~1, 44--53.

\bibitem[\'E2]{Elash:72}
\bysame, {\em Stationary subalgebras of points of the common state for
  irreducible linear lie groups}, Funct. Anal. Appl. \textbf{6} (1972), no.~2,
  139--148.

\bibitem[FH]{FuHa:91}
William Fulton and Joe Harris, {\em Representation theory}, Graduate Texts in
  Mathematics, vol. 129, Springer-Verlag, New York, 1991, A first course,
  Readings in Mathematics.

\bibitem[GGS]{GoGuSt:17}
Daniel Goldstein, Robert Guralnick, and Richard Stong, {\em A lower bound for
  the dimension of a highest weight module}, Represent. Theory \textbf{21}
  (2017), 611--625.

\bibitem[KPV]{KPV:76}
Victor~G. Kac, Vladimir~L. Popov, and Ernest~B. Vinberg, {\em Sur les groupes
  lin\'{e}aires alg\'{e}briques dont l'alg\`ebre des invariants est libre}, C.
  R. Acad. Sci. Paris S\'{e}r. A-B \textbf{283} (1976), no.~12, Ai, A875--A878.

\bibitem[Kem]{Kem:80}
George Kempf, {\em Some quotient surfaces are smooth}, Michigan Math. J.
  \textbf{27} (1980), no.~3, 295--299.

\bibitem[Laz]{Laz:09}
Radu Laza, {\em The moduli space of cubic fourfolds}, J. Algebraic Geom.
  \textbf{18} (2009), no.~3, 511--545.

\bibitem[Lit]{Lit:89}
Peter Littelmann, {\em Koregul\"{a}re und \"{a}quidimensionale
  {D}arstellungen}, J. Algebra \textbf{123} (1989), no.~1, 193--222.

\bibitem[MM]{Mat:60}
Yoz\^{o} Matsushima and Akihiko Morimoto, {\em Sur certains espaces fibr\'{e}s
  holomorphes sur une vari\'{e}t\'{e} de {S}tein}, Bull. Soc. Math. France
  \textbf{88} (1960), 137--155.

\bibitem[MFK]{MFK:94}
D.~Mumford, J.~Fogarty, and F.~Kirwan, {\em Geometric invariant theory}, third
  ed., Springer-Verlag, Berlin, 1994.

\bibitem[Pop1]{Pop:87}
V.~L. Popov, {\em Modern developments in invariant theory}, Proceedings of the
  {I}nternational {C}ongress of {M}athematicians, {V}ol. 1, 2 ({B}erkeley,
  {C}alif., 1986), Amer. Math. Soc., Providence, RI, 1987, pp.~394--406.

\bibitem[Pop2]{Pop:15}
\bysame, {\em Number of components of the nullcone}, Proc. Steklov Inst. Math.
  \textbf{290} (2015), no.~1, 84--90, Published in Russian in Tr. Mat. Inst.
  Steklova {{\bf{290}}} (2015), 95--101.

\bibitem[PV]{PoVi:94}
V.~L. Popov and E.~B. Vinberg, {\em Invariant theory}, Algebraic Geometry IV,
  Encyclopedaedia of Mathematical Sciences, Springer-Verlag, 1994,
  pp.~123--278.

\bibitem[Sch1]{Sch:78a}
Gerald~W. Schwarz, {\em Representations of simple {L}ie groups with a free
  module of covariants}, Invent. Math. \textbf{50} (1978/79), no.~1, 1--12.

\bibitem[Sch2]{Sch:78b}
\bysame, {\em Representations of simple {L}ie groups with regular rings of
  invariants}, Invent. Math. \textbf{49} (1978), no.~2, 167--191.

\bibitem[vLCL]{LiE:92}
Marc A.~A. van Leeuwen, Arjeh~M. Cohen, and Bert Lisser, {\em {LiE}, a package
  for lie group computations}, Computer Algebra Nederland, Amsterdam, 1992.

\bibitem[Weh]{Weh:92}
David~L. Wehlau, {\em A proof of the {P}opov conjecture for tori}, Proc. Amer.
  Math. Soc. \textbf{114} (1992), no.~3, 839--845.

\end{thebibliography}
\end{document}
%\def\cprime{$'$}
%\def\cprime{$'$}
%\printbibliography
\def\cprime{$'$} \def\cprime{$'$} \def\cprime{$'$}

\end{document}